\definecolor{citegreen}{rgb}{0,0.6,0}
\definecolor{refred}{rgb}{0.8,0,0}
\newcommand{\del}{\partial}
\newtheorem{thm}{Theorem}
\newtheorem{prop}[thm]{Proposition}
\newtheorem{cor}[thm]{Corollary}
\newtheorem{lemma}[thm]{Lemma}
\newtheorem{rmk}[thm]{Remark}
\numberwithin{equation}{section} \numberwithin{thm}{section}
\newcommand{\R}{\mathbb{R}}
\newcommand{\N}{\mathbb{N}}
\newcommand{\C}{\mathbb{C}}
\newcommand{\p}{\partial}
\newcommand{\mint}{\mathop{\int\hspace{-1.05em}{\--}}\nolimits}
\newcommand{\ep}{\varepsilon} 
\newcommand{\ph}{\varphi} 
\newcommand{\al}{\alpha}
\newcommand{\gw}{\omega}
\newcommand{\lap}{\Delta}
\DeclareMathOperator{\Id}{Id}
\DeclareMathOperator{\Div}{div}
\newcommand{\x}{\textbf{x}}
\begin{document}
\title[Rigidity of $\varepsilon$-harmonic maps]{Rigidity of $\varepsilon$-harmonic maps of low degree}

\author[J.~H\"orter]{Jasmin H\"orter}
\address[J.~H\"orter]{Department of Mathematics\\ 
Karlsruhe Institute of Technology \\ 
76128 Karlsruhe\\ Germany}
\email{jasmin.hoerter@kit.edu}

\author[T.~Lamm]{Tobias Lamm}
\address[T.~Lamm]{Department of Mathematics\\ 
Karlsruhe Institute of Technology \\ 
76128 Karlsruhe\\ Germany}
\email{tobias.lamm@kit.edu}

\author[M.~Micallef]{Mario Micallef}
\address[M.~Micallef]{Mathematics Institute\\ 
	University of Warwick\\ 
	Coventry CV4 7AL\\ UK}
\email{M.J.Micallef@warwick.ac.uk}

\thanks{The first and second author gratefully acknowledge funding by the Deutsche Forschungsgemeinschaft (DFG, German Research Foundation) – 281869850 (RTG 2229)}
\date{\today}

\begin{abstract} 
In 1981, Sacks and Uhlenbeck introduced their famous $\alpha$-energy as a way to approximate the Dirichlet energy and produce harmonic maps from surfaces into Riemannian manifolds. 
However, the second and third authors together with Malchiodi (\cite{lammmalmic}, \cite{lammmalmic19}) showed that 
for maps between two-spheres this method does not capture every harmonic map. 
They established a gap theorem for $\alpha$-harmonic maps of degree zero and also showed that 
below a certain energy bound $\alpha$-harmonic maps of degree one are rotations. 
We establish similar results for $\varepsilon$-harmonic maps $u_\varepsilon \colon S^2\rightarrow S^2$, 
which are critical points of the $\varepsilon$-energy introduced by the second author in \cite{lamm06}. 
In particular, we similarly show that $\varepsilon$-harmonic maps of degree zero with energy below $8\pi$ are constant 
and that maps of degree $\pm 1$ with energy below $12\pi$ are of the form $Rx$ with $R\in O(3)$. 
Moreover, we construct non-trivial $\varepsilon$-harmonic maps of degree zero with energy $> 8\pi$. 
\end{abstract}

\maketitle
\section{Introduction}
The Dirichlet energy $E(u)$ of a map $u \in W^{1,2}(M,N)$ from 
a smooth, closed two-dimensional Riemannian manifold $(M^2,g)$ to 
a smooth, closed Riemannian manifold $(N^n, h)$ which is isometrically embedded into some $\R^k$ 
is defined as  

\begin{equation}
  E(u) := \frac{1}{2}\int_M |\nabla u|^2 \, dA_M \label{dirichlet}.
\end{equation} 
Critical points of this functional are called harmonic maps and satisfy
\begin{align}\label{cond harm map}
  \Delta u\perp T_uN.
\end{align}
In 1981 Sacks and Uhlenbeck \cite{SacksUhlenbeck} introduced their famous $\alpha$-energy approximation
\begin{align*}
  E_\alpha(u)= \frac{1}{2}\int_M (1+|\nabla u|^2)^\alpha dA_{M}, \qquad \alpha>1.
\end{align*}
Since $\alpha>1$, this functional has better compactness properties than the Dirichlet energy, 
which allowed  Sacks and Uhlenbeck  to show that, as $\alpha\rightarrow 1$, 
a sequence of critical points $(u_\alpha)$ converges to a harmonic map and finitely many bubbles, i.e. non-trivial two-spheres. 
Now one can ask whether every harmonic map can be captured by this procedure and 
the answer is no under suitable energy assumptions. Together with Malchiodi, 
the second and third authors showed (\cite{lammmalmic}, \cite{lammmalmic19}) that 
the only $\alpha$-harmonic maps  $u \colon S^2\rightarrow S^2$ of degree $\pm 1$ 
with energy $E_\alpha$ below $8^\alpha 2\pi$ are of the form $Rx, ~R\in O(3)$ and 
$\alpha$-harmonic maps of degree zero with energy below $6^\alpha 2\pi$ are constant. 
Moreover, they constructed non-constant $\alpha$-harmonic maps of degree zero 
with energy slightly above $6^\alpha 2\pi$, establishing a gap theorem. 

In the following we pose the same question for the fourth order approximation of the Dirichlet energy
\[
  E_\ep (u) = \frac12 \int_{M} (|\nabla u|^2+\ep |\Delta u|^2) \, dA_{M},\qquad \varepsilon > 0.
\]
This approximation was first studied by the second author in \cite{lamm06}, 
where he showed that critical points $u_\varepsilon\in W^{2,2}(M,N^n)$ exist for every $\ep > 0$ and that 
sequences of critical points satisfy the same bubbling picture as the $\al$-harmonic maps studied earlier 
(see Theorem 1.1 in \cite{lamm06}). Further, critical points are smooth and satisfy 
\begin{align*}
  \Delta u - \varepsilon\Delta^2 u 
&= \varepsilon\sum_{i=n+1}^{k} \bigg(\Delta (\langle \nabla u,(d\nu_i \circ u)\nabla u\rangle )  
  + \Div (\langle \Delta u, (d\nu_i\circ u) \nabla u\rangle ) \\ 
&\qquad+ \langle \nabla\Delta u, (d\nu_i\circ u) \nabla u\rangle \bigg) \nu_i\circ u 
  - A(u)(\nabla u, \nabla u),
\end{align*} 
where $\{\nu_i \}_{i=n+1}^k$ is a smooth orthonormal frame of the normal space of $N$ and 
$A$ is the second fundamental form of the embedding $N\hookrightarrow \R^k$. 
In the following we are only interested in maps $u \colon S^2\rightarrow S^2$. Thus the equation simplifies to 
\begin{align*}
  \Delta u - \varepsilon\Delta^2 u 
&= - u|\nabla u|^2 + \varepsilon u\bigg( \Delta|\nabla u|^2 + \Div \langle \Delta u, \nabla u\rangle 
  + \langle \nabla \Delta u, \nabla u\rangle \bigg).
\end{align*}
The degree of every map $u \colon S^2\rightarrow S^2$ is defined by 
\begin{align*}
  \operatorname{deg}(u) := \frac{1}{4\pi} \int_{S^2} J(u)dA_{S^2} 
  \qquad\text{with}\qquad J(u) = u \cdot e_1(u)\wedge e_2(u),
\end{align*}
where $(e_1,e_2)$ is a local oriented orthonormal frame of $TS^2.$ 
For every $u \in W^{2,2}(S^2,S^2)$ with $\deg(u)=1$ we have
\begin{align}\label{E eps abs}\nonumber
  4\pi(1+2\ep)&= \int_{S^2} J(u) \, dA_{S^2} +\frac{\ep}{2\pi} \left(\int_{S^2} J(u) \, dA_{S^2} \right)^2
	\\\nonumber
&\leqslant E(u) + \frac{\ep}{8\pi}\left(\int_{S^2} |\nabla u|^2\, dA_{S^2}\right)^2
	\\\nonumber
&\leqslant E(u) + \frac{\ep}{2} \int_{S^2}|\nabla u|^4\, dA_{S^2} 
	\\
&\leqslant E_\ep (u),
\end{align}
where we used that $\Delta u = (\Delta u)^T - u |\nabla u|^2$ and therefore
\[
  \int_{S^2} |(\Delta u)^T|^2 \, dA_{S^2}+\int_{S^2} |\nabla u|^4 \, dA_{S^2} =\int_{S^2} |\Delta u|^2 \, dA_{S^2}
\]
in the last step. Thus equality holds in (\ref{E eps abs}) if and only if 
$u$ is a harmonic map (third inequality and see (\ref{cond harm map})), 
which is conformal (first inequality) and with constant energy density (second inequality)	
\begin{align*}
  e(u):= \tfrac12 |\nabla u|^2 \equiv 1.
\end{align*}
For every $R\in SO(3)$ and map $u^R(x)= Rx$ we have 
\begin{align}\label{E var rot}
  E_\varepsilon(u^R) = 4\pi + 8\pi\varepsilon.
\end{align}
Hence the rotations are the only minimizers of $E_\varepsilon$ among all maps of degree 1.
Note that it was shown by Wood and Lemaire ((11.5) in \cite{EellsLemaire}) that all harmonic maps between 2-spheres 
are precisely the rational maps and their complex conjugates (i.e., rational in $z$ or $\bar{z}$). 

A rational map $u$ has Dirichlet energy $E(u)=4\pi |\operatorname{deg }(u)|$, 
which is the least energy that a map of this degree can have. 
However one can verify by direct calculation that dilations, which are rational maps of degree one, 
are not critical points of  $E_\varepsilon$ for $\varepsilon > 0.$ 
This is a very special case of the second of the following two main results in this paper, 
whose proofs occupy the next four sections. 

\begin{thm}\label{thm deg zero constant}
	For any $\delta>0$ there exists $\tilde{\varepsilon}>0$ such that the only critical points $u_\varepsilon$ of $E_\varepsilon$ of degree zero which satisfy $E_\varepsilon(u_\varepsilon)\leqslant 8\pi-\delta$ and $\varepsilon\leqslant \tilde{\varepsilon}$ are the constant maps.
\end{thm}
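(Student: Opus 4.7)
The plan is by contradiction, combining the bubbling theorem for the $\varepsilon$-energy with a linearization argument near constant maps. Suppose the statement fails: there exist $\delta>0$, a sequence $\varepsilon_k\downarrow 0$, and non-constant critical points $u_k$ of $E_{\varepsilon_k}$ with $\deg u_k=0$ and $E_{\varepsilon_k}(u_k)\leq 8\pi-\delta$.

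By Theorem~1.1 of \cite{lamm06}, after passing to a subsequence there is a smooth harmonic map $u_\infty:S^2\to S^2$ and finitely many non-trivial harmonic bubbles $\omega_1,\ldots,\omega_m:S^2\to S^2$ such that $u_k\rightharpoonup u_\infty$ in $W^{2,2}$, with smooth convergence away from the concentration points, an energy identity
\[
  \lim_{k\to\infty}E_{\varepsilon_k}(u_k)=E(u_\infty)+\sum_{i=1}^m E(\omega_i),
\]
and the degree identity $0=\deg u_\infty+\sum_i\deg\omega_i$. The Wood--Lemaire classification quoted in the excerpt says that every harmonic map $\varphi:S^2\to S^2$ is (anti-)rational with $E(\varphi)=4\pi|\deg\varphi|$, so $E(\varphi)\geq 4\pi$ whenever $\varphi$ is non-constant. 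If $u_\infty$ were non-constant or any bubble were present, the degree identity together with the triangle inequality $\sum_i|\deg\omega_i|\geq|\deg u_\infty|$ would force the total harmonic energy in the identity above to be at least $8\pi$, contradicting $E_{\varepsilon_k}(u_k)\leq 8\pi-\delta$. Hence $u_\infty$ is a constant map $p\in S^2$, no bubbles form, and $u_k\to p$ smoothly on all of $S^2$.

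It remains to rule out non-constant critical points of $E_{\varepsilon_k}$ that are smoothly close to $p$. Choose $p=\bar u_k/|\bar u_k|$ and set $v_k:=u_k-p$; the sphere constraint $|u_k|=1$ gives $p\cdot v_k=-\tfrac12|v_k|^2$, so the component of $v_k$ along $p$ is quadratic in $v_k$ and the tangential component $v_k^T\in T_pS^2$ has mean zero by the choice of $p$. Inserting $u_k=p+v_k$ into the Euler--Lagrange equation displayed in the excerpt and using that its right-hand side is pointwise a scalar multiple of $u_k$, one obtains an equation of the schematic form
\[
  -\Delta v_k^T+\varepsilon_k\Delta^2 v_k^T=\phi_k v_k^T+R_k,
\]
where $\phi_k$ is a scalar of size $O(\|v_k\|_{C^1}^2+\varepsilon_k\|v_k\|_{C^3}^2)$ and $R_k$ is a remainder at least cubic in $v_k$ and its derivatives. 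Testing against $v_k^T$, integrating by parts and applying the Poincar\'e inequality (available because $v_k^T$ has mean zero) yields
\[
  \|\nabla v_k^T\|_{L^2}^2+\varepsilon_k\|\Delta v_k^T\|_{L^2}^2\leq C\,o(1)\bigl(\|\nabla v_k^T\|_{L^2}^2+\varepsilon_k\|\Delta v_k^T\|_{L^2}^2\bigr),
\]
where the $o(1)$ factor comes from the $C^3$-smallness of $v_k$. For $k$ large the inequality forces $v_k^T\equiv 0$, and then the sphere constraint gives $v_k\equiv 0$, contradicting the non-constancy of $u_k$.

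The main obstacle will be the rigidity step. One has to make the nonlinear absorption work with constants independent of $\varepsilon_k\downarrow 0$, even though $-\Delta+\varepsilon\Delta^2$ degenerates from fourth order to second order in this limit. The delicate point is to track how the $\varepsilon_k$-factors multiplying the third-order nonlinearities ($\Delta|\nabla u|^2$, $\Div\langle\Delta u,\nabla u\rangle$ and $\langle\nabla\Delta u,\nabla u\rangle$) compensate the derivative losses, and to verify that the weighted norm $\|\nabla\,\cdot\,\|_{L^2}^2+\varepsilon\|\Delta\,\cdot\,\|_{L^2}^2$ is strong enough (via Sobolev embedding and Gagliardo--Nirenberg interpolation on $S^2$) to dominate the nonlinear right-hand side. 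A minor additional point is to upgrade, using elliptic regularity for the $\varepsilon_k$-harmonic system, the smooth convergence to $p$ supplied by the bubbling theorem to the high $C^k$-norm needed to feed the contraction estimate.
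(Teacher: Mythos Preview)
Your bubbling step is the same as the paper's. For the rigidity step the paper takes a different route: instead of linearising at the constant $p$ and testing with $v_k^T$, it proves a stand-alone gap lemma (Lemma~\ref{Uhlenbeck gap lemma}) asserting that any critical point of $E_\varepsilon$ with $E_\varepsilon(u)$ below a fixed threshold (and $\varepsilon$ small) is constant. The proof multiplies the Euler--Lagrange equation by $\Delta u$ rather than by $u-p$, integrates, and combines the Bochner formula with the Sobolev embedding $W^{1,1}\hookrightarrow L^2(S^2)$ to close an inequality of the shape
\[
\int_{S^2}\bigl(|\nabla u|^2+|\nabla^2 u|^2+\varepsilon|\nabla^3 u|^2\bigr)\,dA_{S^2}\leq c\,\delta\,\varepsilon\int_{S^2}\bigl(|\nabla^2 u|^2+|\nabla^3 u|^2\bigr)\,dA_{S^2}.
\]
The point of this version is that it needs only \emph{energy} smallness $E_\varepsilon(u)<\delta$, which is exactly what the bubbling step delivers, so no upgrade to $C^3$-closeness is required. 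In your scheme the priorities are in fact reversed from how you describe them: the step you call ``a minor additional point'' (uniform-in-$\varepsilon$ $C^3$-convergence to $p$) is where the real analytical input from \cite{lamm06} sits, whereas once you have $\|\phi_k\|_{L^\infty}\to 0$ the absorption you label the ``main obstacle'' is immediate, since testing with $v_k^T$ and Poincar\'e give $\|\nabla v_k^T\|_{L^2}^2\leq C\|\phi_k\|_{L^\infty}\|\nabla v_k^T\|_{L^2}^2$ with no further $\varepsilon$-bookkeeping. Both arguments are correct; the paper's is self-contained at the level of integral quantities and avoids appealing to the smooth-convergence clause of the bubbling theorem.
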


\begin{thm}\label{thm deg one rot}
  For any $\mu > 0$ there exists $\bar{\varepsilon}>0$ such that 
  the only critical points $u_\varepsilon$ of $E_\varepsilon$ of degree $\pm 1$ which satisfy 
  $E_\varepsilon(u_\varepsilon) \leqslant 12 \pi - \mu$ and $\varepsilon\leqslant \bar{\varepsilon}$ 
  are maps of the form $u^R(x)= Rx$ with $R\in O(3)$.
\end{thm}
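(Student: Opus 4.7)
I would argue by contradiction. If the statement failed, there would be sequences $\varepsilon_k \downarrow 0$ and critical points $u_k := u_{\varepsilon_k}$ of $E_{\varepsilon_k}$ with $|\deg u_k| = 1$, $E_{\varepsilon_k}(u_k) \leqslant 12\pi - \mu$, and $u_k \notin O(3) \cdot \mathrm{id}_{S^2}$. The bubble compactness theorem (Theorem~1.1 of \cite{lamm06}) gives, after passing to a subsequence and reparameterising, a weak $W^{2,2}$-limit $u_0 \colon S^2 \to S^2$ (smooth, harmonic) together with finitely many non-trivial harmonic bubbles $\omega_1, \ldots, \omega_m \colon S^2 \to S^2$ at distinct concentration points, with
\[
\lim_k E_{\varepsilon_k}(u_k) = E(u_0) + \sum_{j=1}^m E(\omega_j), \qquad \pm 1 = \deg u_0 + \sum_{j=1}^m \deg \omega_j.
\]
By the Wood--Lemaire theorem cited after \eqref{E var rot}, each non-constant piece is a rational map in $z$ or $\bar z$ with Dirichlet energy exactly $4\pi |\deg| \geqslant 4\pi$. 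Since $12\pi - \mu < 12\pi$, at most two non-constant pieces can appear; and if two did, their non-zero integer degrees summing to $\pm 1$ would force $|d_1| + |d_2| \geqslant 3$ by parity, and thus energy $\geqslant 12\pi$---a contradiction. So exactly one non-constant piece is present, leaving two possibilities: (a) $u_k \to u_0$ strongly in $W^{2,2}$ and $u_0$ is a M\"obius transformation of degree $\pm 1$ (no bubbling); or (b) $u_0$ is constant and a single degree-$\pm 1$ bubble $\omega$ concentrates at one point $x_0 \in S^2$.

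In case (a) I would use that the Dirichlet energy is M\"obius-invariant while the biharmonic term is only $O(3)$-invariant. For every conformal vector field $X$ on $S^2$ with flow $\phi_t^X$, $E(u_k \circ \phi_t^X) \equiv E(u_k)$, so criticality of $u_k$ reduces to
\[
\frac{d}{dt}\bigg|_{t=0} \int_{S^2}|\Delta(u_k \circ \phi_t^X)|^2\,dA_{S^2} = 0.
\]
Passing to the limit by strong $W^{2,2}$-convergence, $u_0$ is a critical point of $F(v) := \int_{S^2}|\Delta v|^2\,dA_{S^2}$ restricted to the six-dimensional orbit of M\"obius maps of degree $\pm 1$. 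The chain \eqref{E eps abs} together with $E(v) = 4\pi$ gives $F(v) \geqslant 16\pi$ on this orbit with equality exactly at rotations; on the three-dimensional quotient M\"obius$/SO(3) \simeq \mathbb{H}^3$ the descended function is proper and (by the residual $O(3)$-symmetry fixing the centre) radial, with strict minimum at the centre, so rotations form the unique critical orbit. Hence $u_0 = Rx$ for some $R \in O(3)$. A quantitative rigidity statement around this three-dimensional critical manifold---via a Lyapunov--Schmidt reduction based on a kernel analysis of the linearised fourth-order $\varepsilon$-harmonic operator at $Rx$ (whose kernel, for small $\varepsilon$, must coincide with the tangent space to the $O(3)$-orbit rather than with the larger M\"obius tangent space), or equivalently a \L{}ojasiewicz--Simon inequality for the analytic functional $E_{\varepsilon_k}$---then forces $u_k \in O(3) \cdot \mathrm{id}$ for large $k$, contradicting our assumption.

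Case (b) is handled by bubble-scale rescaling. With $\lambda_k \to \infty$ and $x_k \to x_0$, set $\tilde u_k(y) := u_k(\exp_{x_k}(y/\lambda_k))$, so that $\tilde u_k$ is critical for $E_{\tilde\varepsilon_k}$ with $\tilde\varepsilon_k := \varepsilon_k \lambda_k^2$ and $\tilde u_k \to \omega$ in $C^\infty_{\mathrm{loc}}(\R^2, S^2)$. A direct computation on the dilation family $u_\lambda := \pi^{-1} \circ (\lambda\,\cdot\,) \circ \pi$ (with $\pi$ stereographic) gives
\[
\int_{S^2}|\Delta u_\lambda|^2\,dA_{S^2} = \frac{16\pi}{3}\bigl(\lambda^2 + 1 + \lambda^{-2}\bigr),
\]
and the bubble convergence at scale $\lambda_k$ together with $u_0 \equiv \mathrm{const}$ yields the lower bound $\frac{\varepsilon_k}{2}\int|\Delta u_k|^2 \geqslant \frac{8\pi}{3}\tilde\varepsilon_k(1+o(1))$. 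Combined with the energy identity $\lim_k E_{\varepsilon_k}(u_k) = 4\pi$, this forces $\tilde\varepsilon_k \to 0$. Finally, the case-(a) argument applied at the bubble scale (using conformal variations of $\tilde u_k$ on its natural stereographic $S^2$-compactification) identifies $\omega$ as a rotation of $S^2$, and a Lyapunov--Schmidt reduction on the M\"obius family at the concentration scale, matching the leading non-zero first variation of $E_{\varepsilon_k}$ in the dilation direction, forces $u_k$ to lie in the rotation orbit---again a contradiction.

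The principal technical obstacle is case (b): the interplay of the bubble scale $\lambda_k$ with the small parameter $\varepsilon_k$ demands a careful neck estimate to match the local bubble-scale variational analysis with the global critical-point condition, and the Lyapunov--Schmidt reduction at the highly dilated M\"obius profile must convert the non-criticality of dilations (noted just after \eqref{E var rot}) into a quantitative obstruction of the right order in $\varepsilon$. A secondary nontrivial point is the kernel analysis of the linearised fourth-order operator at a rotation in case (a): the three additional directions inherited from the full M\"obius tangent space are harmonic null-directions, and one must verify that they are obstructed at order $\varepsilon$ so that the kernel reduces to the three-dimensional tangent space of the $O(3)$-orbit, which is what permits the Lyapunov--Schmidt reduction and hence the final rigidity.
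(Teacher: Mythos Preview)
Your contradiction setup, bubble-tree decomposition, and combinatorial reduction to a single non-constant piece of degree $\pm 1$ are correct and match the paper's opening move exactly. From there, however, your proposal diverges and contains a genuine gap.

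The paper does not split into your cases (a) and (b). Both are absorbed into Proposition~\ref{p:clomob3}: whether or not a bubble forms, one can find $M \in PSL(2,\C)$ such that $u_M$ is close to the identity in a weighted $W^{2,2}$ norm, with the larger eigenvalue $\lambda$ of $MM^*$ satisfying $\varepsilon(\lambda^2 - 1) \leqslant C\delta$. In the bubbling case $M$ simply incorporates the bubble-scale dilation, which dispenses with your case (b) in one stroke and avoids the delicate neck-matching you flag as the ``principal technical obstacle''. The remainder of the argument then proceeds uniformly via the estimates of Section~\ref{sec eps W32 closeness} and Section~5, culminating in Theorem~\ref{main thm 1}; Theorem~\ref{thm deg one rot} then follows in a few lines.

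Your main misconception concerns the kernel. The Jacobi operator at the identity factors as
\[
J_\varepsilon = \bigl(1 - \varepsilon((\Delta\,\cdot\,)^T - 2)\bigr)\bigl((\Delta\,\cdot\,)^T + 2\bigr),
\]
and since the first factor is positive, $\ker J_\varepsilon = \ker\bigl((\Delta\,\cdot\,)^T + 2\bigr)$ is the full six-dimensional M\"obius tangent space $Z$, \emph{not} the three-dimensional $O(3)$ tangent space. The three dilation directions are genuinely in the kernel at the linear level; they are not ``obstructed at order $\varepsilon$'' in the linearised operator as you suggest. Consequently a \L{}ojasiewicz--Simon inequality would at best localise $u_k$ near the critical set, but cannot by itself identify that set as the rotation orbit.

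The paper gets around this as follows. First, choosing $M$ to minimise $\|\nabla(u_M - \Id)\|_{L^2}$ over $PSL(2,\C)$ imposes orthogonality to $Z$ at leading order, which makes the kernel component $\hat\psi_0$ of $\hat\psi$ quadratically small: $\|\hat\psi_0\|_{L^2} \leqslant c\|\hat\psi\|_{L^\infty}^2$. Second---and this is the substitute for your Lyapunov--Schmidt step---the paper uses an explicit Pohozaev-type identity: since $v = u_M$ is critical for $E_{\varepsilon,\lambda}$, one has $\frac{d}{d\log\lambda} E_{\varepsilon,\lambda}(v) = 0$, whereas the explicit computation \eqref{eq:Eepl} gives
\[
\frac{d}{d\log\lambda} E_{\varepsilon,\lambda}(\Id) = \tfrac{16\pi\varepsilon}{3}(\lambda^2 - \lambda^{-2}) \geqslant \tfrac{16\pi\varepsilon}{3}(\lambda^2 - 1).
\]
Subtracting and bounding the difference by $\sqrt\varepsilon\,\|\sqrt{\chi_\lambda}\,\Delta(v - \Id)\|_{L^2}$ yields $\varepsilon(\lambda^2 - 1) \leqslant C\sqrt\varepsilon \cdot (\text{smallness})$. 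After the bootstrap of Section~5 the right-hand side improves to $c\,\varepsilon^{3/2}(\delta+\varepsilon)^{1/2}(\lambda^2 - 1)$, which for $\varepsilon$ small forces $\lambda = 1$ and hence $u_M = \Id$ exactly. This explicit balance-of-forces argument is where the hard analysis lives, and it is what your proposal is missing.
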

Note that we have to include reflections if $\deg u = -1$.
The proof of Theorem \ref{thm deg zero constant} follows analogously to \cite{lammmalmic},\cite{lammmalmic19}. 
We use the energy identity for $\varepsilon$-harmonic maps (see Theorem 1.1 in \cite{lamm06}) and 
a result by Duzaar and Kuwert \cite{duzkuw}, which shows that 
the degree of a sequence $(u_\varepsilon)$ is preserved in the limit. 
The gap theorem for $\varepsilon$-harmonic maps with small energy 
(Lemma \ref{Uhlenbeck gap lemma}) concludes the proof.  

To prove Theorem \ref{thm deg one rot}, we use the group of conformal transformations of the sphere, 
which is called the M\"obius group. In section \ref{sec Moebius} we will see that these transformations 
correspond to $M \in PSL(2,\mathbb{C})$ via stereographic projection to the complex plane.
We follow  Malchiodi's and the second and third authors' idea \cite{lammmalmic} and 
apply a M\"obius transformation $M$ to a critical point $u_\varepsilon$. The goal is to show that 
for every $\varepsilon > 0$ small enough, there exists $M\in PSL(2,\mathbb{C})$ such that 
$(u_\varepsilon)_M := u_\varepsilon \circ M$ is equal to the identity. 
Moreover, we show further that this $M$ defines a rotation on the sphere.

In a first step we investigate how $E_\varepsilon(u_M)$ changes as we vary $M$. 
We will see that the transformation relation depends only on the larger eigenvalue $\lambda$ of $MM^*$ 
and it is therefore enough to demonstrate that $\lambda=1$.
To do this we show that critical points $u_\varepsilon$ are 
close to a  M\"obius transformation in the $\sqrt{\varepsilon}W^{2,2}$-norm and 
simultaneously  establish a bound on $\lambda$ (Proposition \ref{p:clomob3}). 
Thanks to the structure of the $\varepsilon$-approximation, 
the derivative of $E_{\varepsilon,\lambda}$ with respect to $\log \lambda$ is easy to calculate. 
This simplifies the proof of the bound on the eigenvalue $\lambda$ significantly 
compared to the $\alpha$-harmonic case (see Proposition 3.1 in \cite{lammmalmic}).  
However, to improve the bound of $u_{\varepsilon, \lambda}$ in the $\sqrt{\varepsilon}W^{3,2}$-norm  
we have to employ rather technical tools and estimates like Sobolev embeddings and the Poincar\'e inequality. 

In the last step we choose a M\"obius transformation $M^*$ 
(not to be confused with the adjoint $M^*$ in the definition of $\lambda$) 
which is optimal in the sense that $E_\varepsilon(u_M)$ attains a minimum at $M^*$ as $M$ varies over $PSL(2,\C)$. 
This optimal property of $M^*$ is then used to show that the corresponding eigenvalue $\lambda^*$ is equal to one. 
To do this, we consider the tangential component (which we will call $\hat{\psi}$ in the proof) of $(u_{M^*} - \Id)$ at the identity 
and decompose it into the eigenspaces of the tangential Laplacian $(\Delta \cdot )^T$ on vector fields of $S^2$. 
Then we adapt the ideas in \cite{lammmalmic} to complete the proof; the key point is that 
the kernel of the Jacobi operator $J_{\varepsilon}$ is the same as 
the kernel of $(\Delta \cdot )^T + 2$, which is the tangent space of the M\"{o}bius group at the identity 
(and which we will call $Z$ in the proof). The optimality of $M^*$ allows us to show that 
the $Z$-component $\hat{\psi}_0$ of $\hat{\psi}$ is much smaller than $\hat{\psi}$; 
indeed, in suitable norms, $\hat{\psi}_0$ will be shown to be of quadratic order in $\hat{\psi}$. 
This is a key ingredient in the proof that $\lambda^* = 1$. 

In the final section of the paper we prove the following theorem which shows that 
the bound in Theorem \ref{thm deg zero constant} is optimal. 

\begin{thm}\label{thm deg zero}
  For every $\delta > 0$ there exists $\varepsilon_0>0$ depending only on $\delta$ such that, 
  if $0 < \varepsilon < \varepsilon_0$, there exists an $\varepsilon$-harmonic map 
  $u_\varepsilon \colon S^2 \rightarrow S^2$ with $\operatorname{deg}(u_\varepsilon)=0$ and 
\begin{align*}
  8\pi < E_\varepsilon(u_\varepsilon) <  8\pi + \delta. 
  \end{align*}
\end{thm}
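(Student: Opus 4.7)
The plan is to construct, for each sufficiently small $\varepsilon>0$, a non-constant degree-zero critical point of $E_\varepsilon$ close to an explicit two-bubble ansatz whose $\varepsilon$-energy sits slightly above $8\pi$.

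First, I would build the ansatz. Fix antipodal points $p_\pm\in S^2$ and, for a scale $\lambda>0$, let $\phi^\pm_\lambda\colon S^2\to S^2$ be a standard conformal bubble of degree $\pm 1$ concentrated in a geodesic disc of radius $O(\lambda)$ about $p_\pm$ (an inverse stereographic projection from $p_\pm$ composed with a dilation of scale $\lambda^{-1}$, with orientation chosen appropriately). Gluing $\phi^+_\lambda$ and $\phi^-_\lambda$ via smooth cutoffs produces a map $w_\lambda\colon S^2\to S^2$ of degree zero. A direct computation, analogous to the bubble-energy estimates in \cite{lammmalmic, lammmalmic19}, will give $E(w_\lambda)=8\pi+o_\lambda(1)$ while $\int_{S^2}|\Delta w_\lambda|^2\,dA_{S^2}=O(\lambda^{-2})$; choosing $\lambda=\lambda(\varepsilon)=\varepsilon^{1/4}$ should then yield $8\pi<E_\varepsilon(w_\lambda)<8\pi+\delta/2$ for $\varepsilon$ small.

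Next I would perturb to an honest critical point. The natural route is a Lyapunov--Schmidt reduction modelled on the analysis near the M\"obius group developed in Proposition \ref{p:clomob3}: linearize the Euler--Lagrange equation for $E_\varepsilon$ at $w_\lambda$, identify the (finite-dimensional) approximate kernel generated by the conformal and scaling degeneracies of each bubble, solve the perpendicular component by a fixed-point argument in $\sqrt{\varepsilon}W^{2,2}$, then solve the residual finite-dimensional reduced problem by a degree or explicit variational argument. The output should be a smooth critical point $u_\varepsilon$ with $\|u_\varepsilon-w_{\lambda(\varepsilon)}\|_{\sqrt{\varepsilon}W^{2,2}}=o(1)$; hence $\deg u_\varepsilon=0$, $u_\varepsilon$ is non-constant, and the smallness estimate transfers the upper bound $E_\varepsilon(u_\varepsilon)<8\pi+\delta$ from $w_{\lambda(\varepsilon)}$.

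The hard part will be the strict lower bound $E_\varepsilon(u_\varepsilon)>8\pi$: Theorem \ref{thm deg zero constant} provides only $E_\varepsilon(u_\varepsilon)\geq 8\pi-\delta/2$. I would improve this to strict inequality by contradiction: if $E_\varepsilon(u_\varepsilon)\leq 8\pi$ along a subsequence $\varepsilon\to 0$, then the energy identity of Theorem 1.1 in \cite{lamm06}, together with the Duzaar--Kuwert degree-preservation result \cite{duzkuw}, forces $u_\varepsilon$ to converge weakly to a constant map with exactly two bubbles of degrees $\pm 1$ and energy exactly $4\pi$ each, i.e.\ isometric conformal maps. Combined with the explicit remark preceding Theorem \ref{thm deg zero constant} that dilations are \emph{not} critical points of $E_\varepsilon$ for $\varepsilon>0$, a quantitative Pohozaev-type analysis of the strictly positive fourth-order contribution $\varepsilon\int|\Delta u_\varepsilon|^2$ near each concentration point should produce the contradiction. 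Tracking the precise scale at which this contribution overcomes the freedom gained by bubble rescaling is the technical core of the argument.
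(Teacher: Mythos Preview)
Your approach differs substantially from the paper's, and the route you sketch is much harder than necessary, with the lower bound remaining a genuine gap.

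The paper avoids Lyapunov--Schmidt entirely by restricting to the rotationally symmetric class
\[
X=\{f\colon[0,\pi]\to\R:\ u_f\in W^{2,2}(S^2,S^2),\ f(0)=0,\ f(\pi)=2\pi\},
\]
where $u_f(r,\theta)=(\sin f(r)\cos\theta,\sin f(r)\sin\theta,\cos f(r))$. Palais's principle of symmetric criticality guarantees that any critical point of $E_\varepsilon$ restricted to this class is a genuine critical point of $E_\varepsilon$. Existence of a minimizer $u_{f^*}$ in $X$ is then immediate from the direct method, since $E_\varepsilon$ controls the $W^{2,2}$-norm and is weakly lower semicontinuous. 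This replaces your entire perturbative step: no kernel analysis, no fixed-point argument, no finite-dimensional reduced problem to solve. Your reduction, by contrast, leaves open how the reduced functional behaves as the bubble scales or centers degenerate, which is exactly the issue that usually makes such arguments delicate.

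The lower bound in the paper is also elementary and does not require any contradiction or Pohozaev argument. Using $|\Delta u_f|^2\ge|\nabla u_f|^4$ and the pointwise bound $(f')^2+\sin^2 f/\sin^2 r\ge 2|f'\sin f|/\sin r$, together with a split of the $r$-integral at a point where $f^*=\pi$, one gets directly
\[
E_\varepsilon(u_{f^*})\ \ge\ 8\pi+32\varepsilon\pi^2\ >\ 8\pi
\]
for \emph{every} $f\in X$. Your proposed lower bound argument, by contrast, invokes the energy identity and Duzaar--Kuwert to say the limit must be two conformal bubbles, then appeals to the remark that dilations are not $\varepsilon$-critical and a ``quantitative Pohozaev-type analysis'' of $\varepsilon\int|\Delta u_\varepsilon|^2$. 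This last step is where your sketch stops being a proof: you have not explained why the fourth-order term cannot be $o(1)$ along the sequence (indeed, that is precisely what bubbling would suggest), nor how non-criticality of dilations for fixed $\varepsilon>0$ yields anything in the limit $\varepsilon\to 0$. The paper simply sidesteps all of this by staying inside the symmetric class, where the strict lower bound is an explicit computation.

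The upper bound in the paper is closest in spirit to your ansatz: an explicit test map $u_f\in X$ with $f(r)=2\arctan(\Lambda\tan r)$ (plus $2\pi$ on the lower hemisphere), which is a rotationally symmetric degree-zero two-bubble, gives $E_\varepsilon(u_f)<8\pi+c\varepsilon^{1/2}$ for $\Lambda=\varepsilon^{-1/4}$. This matches your scaling $\lambda=\varepsilon^{1/4}$, so your intuition for the upper bound is correct; but the point is that within $X$ this test map need not be perturbed at all---the minimizer over $X$ already is the critical point.
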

The map $u_{\varepsilon}$ in the above theorem is constructed by minimising $E_{\varepsilon}$ 
among a suitable class of rotationally symmetric maps.


\section{The M\"obius group}\label{sec Moebius}
First  we turn our attention to maps of degree one. As mentioned in the introduction, 
all harmonic maps between two-spheres projected to the complex plane are rational 
with Dirichlet energy $E(u) = 4\pi|\deg(u)|$. 
Moreover, all rational maps of degree one form a group under composition. 
This group is called the  M\"obius group and we shall now describe those of its features that are relevant to us. 

Let $\hat{\mathbb{C}}=\mathbb{C}\cup \{ \infty\}$. A holomorphic function 
$m\colon \hat{\mathbb{C}}\rightarrow \hat{\mathbb{C}}$ of the form  
\begin{align*}
  m(\xi) = \frac{a\xi+b}{c\xi+d},\qquad\text{with }\enspace  ad-bc=1, \enspace a,b,c,d\in\mathbb{C}, 
\end{align*}
is called a M\"obius transformation. All rational functions of degree one are of this form. 
We write the coefficients $a,b,c,d$ in matrix form
\begin{align*}
  M&= \begin{pmatrix} a&b\\c&d \end{pmatrix} 
  \enspace \text{with}\enspace \det M=1.
\end{align*} 
The group of such matrices is called the special linear group and it is denoted by $SL(2,\mathbb{C})$. 
Note that $M$ and $N$ in $SL(2, \C)$ represent the same rational map of degree one if, and only if, $M = \pm N$. 
Thus, the M\"obius group can be identified with the projective special linear group $PSL(2,\C)$ defined by 
\[ 
PSL(2,\C) := SL(2,\C)/\{\pm I_2\} 
\] 
where $I_2 = \big(\begin{smallmatrix} 1& 0 \\ 0 & 1 \end{smallmatrix}\big)$. 

It is well known that, if we identify $\hat{ \mathbb{C}}$ with $S^2$ via stereographic projection, 
a M\"{o}bius transformation can be expressed as a rotation followed by a dilation followed by another rotation. 
This can be seen from the singular value decomposition of $M$ 
according to which there exists $U,V \in SU(2)$ such that 
\begin{equation} \label{eq:singvalueM} 
  M= UDV^*
\end{equation}
where $D$ is the diagonal matrix whose entries $D_{11}, D_{22} > 0$ are the square roots of the eigenvalues of $M M^*$, 
where $M^*$ is the adjoint of $M$. Since $\det M=1$ it follows that  $\det D=1$ and thus $D_{11}=\frac{1}{D_{22}}$. 
By relabelling $D_{11}$ and $D_{22}$ if necessary, we see that $D$ can be written in the form 
\begin{align}\label{M diag ev}
  D = \begin{pmatrix} \lambda^{\frac12} &0 \\ 0& \lambda^{-\frac12} \end{pmatrix}, \quad \lambda \geqslant 1. 
\end{align}
If $M=D$, the corresponding M\"obius transformation is a dilation 
\begin{align*}
  m_\lambda(\xi):= \lambda\xi. 
\end{align*} 
We shall now explain the well-known fact that elements of $SU(2)$ in the M\"{o}bius group can be identified with rotations. 
We start by viewing $S^2$ as the complex projective line $\C\mathbb{P}^1$ where, 
if $\sim$ denotes the equivalence relation on $\C^2 \setminus \{(0,0)\}$ defined by 
$(\sigma,\tau) \sim (\rho \sigma ,\rho \tau), \quad \rho \neq 0$, then 
$\C\mathbb{P}^1 := (\C^2 \setminus \{(0,0)\}) / \sim$. 
Points in $\C\mathbb{P}^1$ shall be written as $[\gw], \ \gw = (\sigma,\tau) \in \C^2 \setminus \{(0,0\}$. 
We next identify $[\gw]$ with the orthogonal projection $\Pi_{[\gw]}$ of $\C^2$ onto the line spanned by $\gw$. 
Thus $\Pi_{[\gw]}$ is the $2 \times 2$ hermitian matrix of rank 1 and trace equal to 1 defined by 
\[ 
\Pi_{[\gw]} := \frac{\gw \otimes \gw^*}{|\gw|^2} = \frac{1}{|\sigma|^2 + |\tau|^2}\begin{pmatrix} 
  |\sigma|^2 & \sigma \bar{\tau} \\[\jot] 
  \bar{\sigma} \tau & |\tau|^2 \end{pmatrix}. 
\] 
The action of $PSL(2,\C)$ on $S^2$ can then be seen as the following action on the (nonlinear) space 
$H_1$ of $2 \times 2$ hermitian matrices of rank 1 whose trace is equal to 1: 
\begin{equation} \label{eq:MobH1} 
  \Pi_{[\gw]} \mapsto \mu_M(\Pi_{[\gw]}) := \Pi_{[M\gw]} = 
  \frac{M(\gw \otimes \gw^*)M^*}{|M \gw|^2}, \quad M \in SL(2,\C). 
\end{equation} 
Observe that $\mu_M = \mu_{(-M)}$ and therefore, the action $\mu$ just defined is indeed an action of $PSL(2,\C)$. 

In order to identify a M\"obius transformation as a rotation we need to see $S^2$ as a subset of $\R^3$. 
To this end, we identify $\R^3$ with the space $H_0$ of traceless Hermitian $2 \times 2$ matrices: 
\[ 
H_0 := \left\{\begin{pmatrix} z & x + iy \\ x - iy & -z \end{pmatrix} : x,y,z \in \R\right\}. 
\] 
Note that if $H \in H_0$ then $\det H = -(x^2 + y^2 + z^2)$. 
We may map matrices from $H_1$ to $H_0$ by subtracting $\frac12 I_2$ and then, 
for normalisation purposes, multiplying by 2. Thus, 
\[ 
S^2 := 2\frac{\gw \otimes \gw^*}{|\gw|^2} - I_2 = \frac{1}{|\sigma|^2 + |\tau|^2}\begin{pmatrix} 
  |\sigma|^2 - |\tau|^2 & 2 \sigma \bar{\tau} \\[\jot] 
  2 \bar{\sigma} \tau & |\tau|^2 - |\sigma|^2\end{pmatrix}, \quad [\gw] = [(\sigma,\tau)] \in \C\mathbb{P}^1. 
\] 
For $\tau \neq 0$ we have $[(\sigma,\tau)] = [(\xi,1)]$ where $\xi = \sigma/\tau$ and the corresponding point in $\R^3$ is then 
\[ 
\left( \frac{\xi + \bar{\xi}}{|\xi|^2 + 1}\,,\, \frac{-i(\xi - \bar{\xi})}{|\xi|^2 + 1}\,,\, \frac{|\xi|^2 -1}{|\xi|^2 + 1} \right) 
\] 
which is the image under the inverse of the stereographic projection from the North Pole of the point $\xi \in \C$. 
In order to preserve the traceless condition on matrices in $H_0$, 
the action $\mu$ of $PSL(2,\C)$ on $H_1$ described in \eqref{eq:MobH1} can only be transferred to $H_0$ 
for those matrices $M$ which are unitary and which therefore (since their determinant is 1) must belong to $SU(2)$. 
Now for $M \in SU(2)$ and $H \in H_0$ we have $\det(\mu_M H) = \det(M H M^*) = \det H$, 
which shows that $\mu_M$ is an isometry of $\R^3$ for all $M \in SU(2)$. 
Using the fact that $SU(2)$ is connected and that $\mu_{I_2}$ is the identity on $\R^3$ 
we see that $\mu_M \in SO(3)$ for all $M \in SU(2)$. As promised, we can now interpret \eqref{eq:singvalueM} 
as saying that a M\"{o}bius transformation can be expressed as a rotation followed by a dilation followed by another rotation. 
In particular, $M \in PSL(2,\C)$ represents a rotation if, and only if, the larger eigenvalue $\lambda$ of $M M^*$ is equal to 1.

Let us go back to our original problem and use  the M\"obius transformations in the following way: Let $u_\varepsilon\in W^{1,2}(S^2,S^2)$ be a degree one critical point of $E_\varepsilon$.
The idea is to compose $u_\varepsilon$ with a M\"obius transformation $M$ and show that $(u_\varepsilon)_M$ is close to the identity map $\Id:S^2 \rightarrow S^2$ if $E(u_\varepsilon) < 4\pi + 8\pi\varepsilon + \mu$ and $\mu > 0$ is sufficiently small. If we are able to show that there exists $M\in PSL(2,\mathbb{C})$ such that $(u_\varepsilon)_M$ is actually equal to $\Id$, then $u_\varepsilon$ itself has to be a M\"obius transformation. Moreover,  if $M\in SU(2)$ then $u_\varepsilon$ is a rotation.

In a first step we investigate how $E_\varepsilon$ transforms if we apply $u_M$. To do this we work in stereographic coordinates and consider $u:\hat{\mathbb{C}} \rightarrow S^2$.  The Riemannian metric on $S^2$ in stereographic coordinates is given by $g_{ij}= \frac{4}{(1+|\xi|^2)^2}\delta_{ij}$. For $\xi \in \hat{ \mathbb{C}}$ we have 
\begin{align*}
	|\nabla_{S^2} u|^2(\xi)= \frac{(1+|\xi|^2)^2}{4}|\nabla _\mathbb{C} u|^2(\xi)\qquad\text{and}\qquad |\Delta_{S^2} u|^2(\xi)=\frac{(1+|\xi|^2)^4}{16}|\Delta_{\mathbb{C}} u|^2(\xi),
\end{align*}
where $\nabla_{\mathbb{C}}$ is the gradient on $\mathbb{C}$ and $\Delta_{\mathbb{C}}$ the Laplacian on $\mathbb{C}$ with the flat metric on both the domain and the target. The area element is given by 
\begin{align*}
	dA_{S^2}= \frac{4}{(1+|\xi|^2)^2}dA_{\mathbb{C}},
\end{align*}
with $dA_{\mathbb{C}}= \frac{\sqrt{-1}}{2} d\xi\wedge d\bar{\xi}$ the Euclidean area element on $\mathbb{C}$.
We define $u_M$ by
\begin{align*}
	u_M(\xi)=u(M\xi) = u \left(\frac{a\xi+b}{c\xi+d}\right).
\end{align*}
Using the above and the fact that $M\xi$ is harmonic we have 
\begin{align*}
	|\Delta_{S^2} u_M|^2 (\xi) &= \frac{(1+|\xi|^2)^4}{16}|\Delta_{\mathbb{C}} u_M|^2(\xi)
	\\
	&= \frac{(1+|\xi|^2)^4}{16} \left|\frac{d}{d\xi} \left(\frac{a\xi+ b}{c\xi+d}\right)\right|^4 |\Delta_{\mathbb{C}} u|^2(M\xi)
	\\
	&= \frac{(1+|\xi|^2)^4}{16} \frac{1}{|c\xi +d|^8} |\Delta_{\mathbb{C}} u|^2(M\xi)
	\\
	&= \frac{(1+|\xi|^2)^4}{|c\xi+d|^8 (1+|M\xi|^2)^4}|\Delta_{S^2} u|^2(M\xi).
\end{align*}
With the singular value decomposition in (\ref{M diag ev}) we have 
\begin{align*}
	|c\xi +d|^2 (1+|M\xi|^2) = |a\xi+b|^2 + |c\xi+d|^2= \left|\left(\begin{matrix}
		a&b\\c&d
	\end{matrix}\right) \left( \begin{matrix}
		\xi \\1
	\end{matrix}\right)\right|^2= \left| \left(\begin{matrix}
		\lambda^\frac{1}{2} &0\\0&\lambda^{-\frac{1}{2}}
	\end{matrix}\right) \left(\begin{matrix}
		\xi\\1
	\end{matrix}\right)\right|^2
	= \frac{\lambda^2 |\xi|^2 +1}{\lambda}
\end{align*}
and therefore 
\begin{align}\label{transfo Delta u}
	|\Delta_{S^2} u_M|^2 (\xi) = \frac{\lambda^4(1+|\xi|^2)^4}{(1+\lambda^2 |\xi|^2)^4}|\Delta_{S^2} u|^2(M\xi).
\end{align}
Analogously we get 
\begin{align}\label{transf e(u)}
	|\nabla_{S^2} u_M|^2(\xi)= \frac{\lambda^2 (1+|\xi|^2)^2}{(1+\lambda^2|\xi|^2)^2}|\nabla_{S^2} u|^2(M\xi).
\end{align}
Note that the transformation relation depends only on the eigenvalue $\lambda$.  Hence it is enough to restrict our attention in the following to dilations $m_\lambda$. To show that $M\in SU(2)$ it suffices to show that $\lambda=1$. We set 
\begin{align*}
	u(m_\lambda(\xi))= u(\lambda\xi)=:u_\lambda(\xi)
\end{align*}
and
\begin{align*}
	\chi_\lambda(\xi)=\frac{(1+\lambda^2 |\xi|^2)^2}{\lambda^2(1+|\xi|^2)^2}.
\end{align*}
With (\ref{transfo Delta u}) and (\ref{transf e(u)}) we have 
\begin{align*}
	|\nabla_{S^2} u|^2(\lambda\xi)= \chi_\lambda(\xi) |\nabla_{S^2} u_\lambda |^2(\xi)\qquad\text{and} \qquad |\Delta_{S^2} u|^2(\lambda\xi) = \chi_\lambda^2(\xi) |\Delta_{S^2} u_\lambda|^2(\xi).
\end{align*}
Applying all of this to $E_\varepsilon$ we get 
\begin{align}\label{e var E lambda var}\nonumber
	E_\varepsilon(u)&=\frac{1}{2} \int_{\mathbb{C}} \left(|\nabla_{S^2}u|^2(\xi) + \varepsilon |\Delta_{S^2} u|^2(\xi) \right) \frac{4}{(1+|\xi|^2)^2} dA_{\mathbb{C}}(\xi)
	\\\nonumber
	&= \frac{1}{2}\int_\mathbb{C} \left(|\nabla_{S^2} u|^2(\lambda\xi) +\varepsilon|\Delta_{S^2} u|^2(\lambda\xi)\right) \frac{4\lambda^2}{(1+|\lambda\xi|^2)^2}d A_{\mathbb{C}}(\xi)
	\\\nonumber
	&= \frac{1}{2}\int_{\mathbb{C}}\left(\chi_\lambda(\xi) |\nabla_{S^2} u_\lambda |^2(\xi)+\varepsilon\chi_\lambda^2(\xi) |\Delta_{S^2} u_\lambda|^2(\xi) \right)\frac{4\lambda^2}{(1+|\lambda\xi|^2)^2}d A_{\mathbb{C}}(\xi)
	\\\nonumber
	&= \frac{1}{2}\int_{\mathbb{C}} \left( |\nabla_{S^2} u_\lambda |^2(\xi)+\varepsilon\chi_\lambda(\xi) |\Delta_{S^2} u_\lambda|^2(\xi) \right) \frac{4}{(1+|\xi|^2)^2} dA_{\mathbb{C}}(\xi)
	\\\nonumber
	&=\frac{1}{2}\int_{S^2}\left( |\nabla_{S^2} u_\lambda|^{2}+\varepsilon\chi_\lambda|\Delta_{S^2} u_\lambda|^2\right) dA_{S^2}
	\\
	&=: E_{\varepsilon,\lambda}(u_\lambda).\text{\footnotemark}
\end{align}
\footnotetext{Note that $E_{\varepsilon,\lambda}$ differs from $E_\varepsilon$ only by the factor of $\chi_\lambda$ that multiplies the $|\Delta_{S^2} u_\lambda|^2$ term. This factor $\chi_\lambda$ is important because it measures the lack of conformal invariance of the integral $\int_{S^2} |\Delta_{S^2} u|^2 \, dA_{S^2}$.}
Hence $u$ is a critical point of $E_\varepsilon$ if and only if $u_\lambda$ is a critical point of $E_{\varepsilon,\lambda}$. Since $E_{\varepsilon,\lambda}(u_\lambda)=E_{\varepsilon,\lambda^{-1}}(u_{\lambda^{-1}})$ we will assume from now on that $\lambda\geqslant 1.$
In the following we omit the subscript and write $\nabla=\nabla_{S^2},~ \Delta=\Delta_{S^2}$.
An easy calculation (see \cite{ChangWangYang} Proposition 1.1) shows

\begin{prop}
	Let $\varepsilon>0$. Every critical point $v\in W^{2,2}(S^2,S^2)$ of $E_{\varepsilon,\lambda}$ satisfies the following Euler-Lagrange equation 
	\begin{align}\label{EL eps}
		-\Delta v +\ep \Delta (\chi_\lambda \Delta v) &= v \bigg(|\nabla v|^2-\ep \Delta (\chi_\lambda |\nabla v|^2) - 2\ep \Div \langle \chi_\lambda \Delta v, \nabla v\rangle +\ep \chi_\lambda |\Delta v|^2 \bigg).
	\end{align}
	
\end{prop}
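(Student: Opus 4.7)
The plan is to derive \eqref{EL eps} from the first variation of $E_{\varepsilon,\lambda}$, using the constraint $v(x)\in S^2$ to identify the Lagrange-multiplier (normal) component. First I would fix a test field $\varphi \in C^\infty(S^2,\R^3)$ satisfying $\langle\varphi,v\rangle\equiv 0$ pointwise and consider the variation $v_t = \pi_{S^2}(v+t\varphi)$, so that $v_t \in W^{2,2}(S^2,S^2)$ for small $t$ and $\partial_t v_t|_{t=0}=\varphi$. Differentiating \eqref{e var E lambda var} at $t=0$ yields the weak form
\begin{equation*}
\int_{S^2}\Big(\langle \nabla v,\nabla\varphi\rangle + \varepsilon\chi_\lambda\langle \Delta v,\Delta \varphi\rangle\Big)\,dA_{S^2}=0
\end{equation*}
for every such tangent $\varphi$.

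Next, two integrations by parts (justified by the $W^{2,2}$-regularity of $v$ and the smoothness of $\chi_\lambda$) transfer all derivatives onto the coefficients, giving
\begin{equation*}
\int_{S^2}\Big\langle -\Delta v+\varepsilon \Delta(\chi_\lambda \Delta v),\varphi\Big\rangle \,dA_{S^2}=0
\end{equation*}
for every tangent $\varphi$. Since only the component parallel to $v$ survives testing against the orthogonal complement, we obtain pointwise
\begin{equation*}
-\Delta v+\varepsilon \Delta(\chi_\lambda \Delta v) = \mu\, v, \qquad \mu := \big\langle -\Delta v+\varepsilon \Delta(\chi_\lambda \Delta v),v\big\rangle,
\end{equation*}
so everything reduces to identifying $\mu$.

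To evaluate $\mu$ pointwise I would use the constraint $|v|^2\equiv 1$, which after differentiation gives $\langle\nabla v,v\rangle=0$ and $\langle\Delta v,v\rangle=-|\nabla v|^2$; the first term therefore contributes $|\nabla v|^2$ to $\mu$. For the fourth-order piece I would apply the vector Leibniz identity
\begin{equation*}
\Delta\langle A,B\rangle = \langle\Delta A,B\rangle+2\langle\nabla A,\nabla B\rangle+\langle A,\Delta B\rangle
\end{equation*}
with $A=\chi_\lambda \Delta v$ and $B=v$, combined with the elementary product-rule identity
\begin{equation*}
\langle\nabla(\chi_\lambda\Delta v),\nabla v\rangle = \Div\langle\chi_\lambda \Delta v,\nabla v\rangle-\chi_\lambda|\Delta v|^2,
\end{equation*}
to obtain
\begin{equation*}
\langle\Delta(\chi_\lambda\Delta v),v\rangle = -\Delta(\chi_\lambda |\nabla v|^2) -2\Div\langle\chi_\lambda \Delta v,\nabla v\rangle+\chi_\lambda|\Delta v|^2.
\end{equation*}
Substituting this expression into $\mu v$ yields precisely the right-hand side of \eqref{EL eps}.

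The only care needed is sign and factor bookkeeping in the last identity: the appearance of both $\Delta(\chi_\lambda |\nabla v|^2)$ and $\Div\langle\chi_\lambda\Delta v,\nabla v\rangle$ as \emph{separate} terms (rather than collapsing into a single transport term, as in the $\alpha$-harmonic setting) reflects the presence of the non-constant weight $\chi_\lambda$ sitting inside the outer Laplacian. Everything else is standard calculus and requires no further approximation argument.
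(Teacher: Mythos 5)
Your derivation is correct and is essentially the standard first-variation computation that the paper defers to Chang--Wang--Yang (Proposition 1.1); the Lagrange-multiplier identification, the identity $\langle\Delta v,v\rangle=-|\nabla v|^2$, the Leibniz rule for $\Delta\langle A,B\rangle$, and the rewriting of $\langle\nabla(\chi_\lambda\Delta v),\nabla v\rangle$ via $\Div\langle\chi_\lambda\Delta v,\nabla v\rangle-\chi_\lambda|\Delta v|^2$ all check out, and substituting these into $\mu$ reproduces \eqref{EL eps} exactly. The only thing worth noting is that passing from the weak form to the pointwise fourth-order equation uses more regularity than the stated $W^{2,2}$, but this is handled by the regularity theory cited elsewhere in the paper and is not a gap in your argument.
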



\section{Closeness to the M\"obius group}

In this section we consider critical points of $E_\varepsilon$ of degree 1 whose $\varepsilon$-energy lies below $4\pi(1+2\varepsilon)+\mu$,  $\mu>0$ small. Our aim is to show that these maps are $W^{1,2}$-close to a M\"obius transformation. 
We start with a Lemma collecting various properties of critical points of $E_{\ep,\lambda}$.
\begin{lemma}\label{propenergy}
    Let $v\in W^{2,2}$ be a critical point of $E_{\ep,\lambda}$, then we have
    \begin{align}
      0= 	\frac{d}{d\, \log \lambda} E_{\ep,\lambda}(v) = 
	\ep \int_{S^2} \chi_\lambda z(\lambda \cdot) |\Delta v|^2 \, dA_{S^2},	\label{stat1} \end{align}
	where $z(\xi)=\frac{|\xi|^2-1}{|\xi|^2+1}$. Moreover, we have
	\begin{align}
	 E_{\ep,\lambda}(\Id)   =4\pi\big(1+ \frac{2 \ep}{3}(\lambda^2+1+\lambda^{-2})\big), \label{eq:Eepl} 
	\end{align}
	and there exists a constant $C>0$ so that
	\begin{align}
	  	C \ep (\lambda^2 - 1) &\leqslant \frac{d}{d\, \log \lambda}  E_{\ep,\lambda}(\Id)-\frac{d}{d\, \log \lambda} E_{\ep,\lambda}(v) 
		\notag
		\\ 
		&\leqslant  \sqrt{\ep} \| \sqrt{\chi_{\lambda}}\Delta (v-\Id)\|_{L^2(S^2)} 
		\sqrt{\ep} \big(\| \sqrt{\chi_{\lambda}}\Delta v\|_{L^2(S^2)} + \| \sqrt{\chi_{\lambda}}\Delta \Id\|_{L^2(S^2)}\big).  \label{eq:lsqbd} 
	\end{align}
    \end{lemma}
    \begin{proof}
        	We start by calculating as in section 5 of \cite{lammmalmic}
	\begin{align*}
		\log(\chi_\lambda(\xi))&= 2\log(1+\lambda^2 |\xi|^2)- 2\log \lambda -2\log (1+|\xi|^2),
		\\
		\frac{d}{d\lambda} \log (\chi_\lambda(\xi)) &= \frac{4\lambda|\xi|^2}{1+\lambda^2|\xi|^2}-\frac{2}{\lambda},
		\\
		\frac{d}{d\log \lambda}\log(\chi_\lambda(\xi))&= \frac{2(\lambda^2|\xi|^2-1)}{(\lambda^2|\xi|^2+1)},
	\end{align*}
	to obtain
	\[
	\frac{d}{d\, \log \lambda} E_{\ep,\lambda}(v) = 
	\ep \int_{S^2} \chi_\lambda z(\lambda \cdot) |\Delta v|^2 \, dA_{S^2},	\] 
	where $z(\xi)=\frac{|\xi|^2-1}{|\xi|^2+1}$. Since $v$ is a critical point of $E_{\varepsilon, \lambda}$ we have $E_{\varepsilon,\lambda}'(v)=0$.
	Using $E_{\varepsilon,\tau}(v)=E_{\varepsilon,\lambda}(v_{\lambda\tau^{-1}}) $ we get
	\begin{align*}
		\frac{d}{d\log \tau}E_{\varepsilon, \tau} (v)|_{\tau=\lambda}=\left( \tau \frac{d}{d\tau} E_{\varepsilon,\lambda}(v_{\lambda\tau^{-1}})\right)\bigg|_{\tau=\lambda}= E_{\varepsilon,\lambda}'(v)\left( \tau \frac{d}{d\tau }v_{\lambda\tau^{-1} }\right)\bigg|_{\tau=\lambda}
	\end{align*}
	and thus 
	\begin{align*}
		\frac{d}{d\, \log \lambda} E_{\ep,\lambda}(v) = 0,
	\end{align*}
	which implies \eqref{stat1}.
	Further
	\begin{align*} 
	 E_{\ep,\lambda} (\Id)&= 4\pi+2\ep \int_{S^2} \chi_\lambda \, dA_{S^2} \notag \\
		&= 4\pi+ 2\ep \int_{\mathbb{C}} \frac{(1+\lambda^2|\xi|^2)^2}{\lambda^2(1+|\xi|^2)^2} \frac{4}{(1+|\xi|^2)^2} \, dA_{\mathbb{C}}(\xi) 
		\notag \\
		&= 4\pi+16\pi \ep \int_0^\infty \frac{(1+\lambda^2 r^2)^2}{\lambda^2(1+r^2)^2} \frac{r}{(1+r^2)^2} \, dr 
		\notag \\
		&= 4\pi +8\pi \ep \frac{\lambda}{\lambda^2-1} \int_{\lambda^{-1}}^\lambda w^2 \, dw \notag \\
		&= 4\pi\big(1+ \frac{2 \ep}{3}(\lambda^2+1+\lambda^{-2})\big), 
	\end{align*} 
	where we used the substitution $w=\frac{1+\lambda^2r^2}{\lambda(1+r^2)}$. Differentiating this explicit expression for $E_{\ep,\lambda}(\Id)$ with respect to $\log \lambda$ yields 
	\begin{align} 
		\frac{d}{d \log \lambda} E_{\ep,\lambda}(\Id) &= \frac{16\pi \ep}{3}(\lambda^2-\lambda^{-2}) \notag 
		\\ 
		& = \frac{16\pi \ep}{3}(\lambda^2 - 1) \frac{\lambda^2 + 1}{\lambda^2} \notag
		\\ 
		& \geqslant \frac{16\pi\ep }{3}  (\lambda^2 - 1). \label{eq:lowerlabd} 
	\end{align} 
	Since $\|z\|_{L^\infty(S^2)}\leqslant 1$, we conclude that 
	\begin{align*}
		C \ep (\lambda^2 - 1) &\leqslant \frac{d}{d\, \log \lambda}  E_{\ep,\lambda}(\Id)-\frac{d}{d\, \log \lambda} E_{\ep,\lambda}(v) 
		\notag
		\\ 
		&= \ep \int_{S^2} \chi_\lambda z(\lambda \cdot) (|\Delta \Id|^2 - |\Delta v|^2) \, dA_{S^2} \notag 
		\\ 
		&\leqslant  \sqrt{\ep} \| \sqrt{\chi_{\lambda}}\Delta (v-\Id)\|_{L^2(S^2)} 
		\sqrt{\ep} \big(\| \sqrt{\chi_{\lambda}}\Delta v\|_{L^2(S^2)} + \| \sqrt{\chi_{\lambda}}\Delta \Id\|_{L^2(S^2)}\big). 
	\end{align*} 
    \end{proof}
Inequality \eqref{eq:lsqbd} is our main tool for measuring the deviation of $\lambda$ from 1. We take the first step in this direction in the next proposition. 
\begin{prop}\label{p:clomob3} 
	For any $\delta>0$ there exists $\mu > 0$ such that, 
	if $0 < \ep \leqslant 1$ and if $E_\ep(u) \leqslant 4\pi(1+2\ep) + \mu$, 
	where $u$ is a critical point of $E_\ep$ of degree $1$, 
	then there exists $M \in PSL(2,\mathbb{C})$ such that 
	\begin{align}\label{eq:delta-close2}
		\left\|  \nabla (u_M - \Id) \right\|_{L^2(S^2)} + \sqrt{\ep} \left\| \sqrt{\chi_\lambda} \, \Delta (u_M-\Id) \right\|_{L^2(S^2)} \leqslant \delta. 
	\end{align}
	Furthermore, there exists a fixed constant $C>0$ such that
	if $\lambda \geqslant 1$ is the largest eigenvalue of $MM^*$ (see (\ref{M diag ev})), then 
	\begin{equation} \label{eq:alphalambd2} 
		\ep (\lambda^2-1) \leqslant C \delta. 
	\end{equation} 
\end{prop}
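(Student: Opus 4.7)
The plan is to exploit the chain of inequalities \eqref{E eps abs} together with a concentration--compactness argument to first produce $W^{1,2}$-closeness to the M\"obius group, then bootstrap to the weighted $\sqrt{\varepsilon}W^{2,2}$ bound using the decomposition of $\Delta u_M$ into tangential and normal parts, and finally extract the bound on $\lambda$ by comparing $E_{\varepsilon,\lambda}(u_M)$ with the explicit formula for $E_{\varepsilon,\lambda}(\Id)$.

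\textbf{Step 1 ($W^{1,2}$-closeness).} The chain \eqref{E eps abs} combined with the hypothesis immediately yields $E(u) \leqslant 4\pi + \mu$ and $\varepsilon \int_{S^2} |(\Delta u)^T|^2\, dA_{S^2} \leqslant 2\mu$. The first estimate places $u$ in a $W^{1,2}$-neighbourhood of the set of degree-one harmonic maps $S^2 \to S^2$, which is exactly the image of $PSL(2,\mathbb{C})$. A quantitative concentration--compactness argument, of the kind carried out in \cite{lammmalmic} and relying on the degree-preservation result of \cite{duzkuw}, then produces $M \in PSL(2,\mathbb{C})$ with $\|\nabla(u_M - \Id)\|_{L^2(S^2)} \leqslant \eta(\mu)$, where $\eta(\mu)\to 0$ as $\mu \to 0$. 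Using the singular value decomposition $M = UDV^*$ and the fact that $U, V \in SU(2)$ act on $S^2$ by isometries preserving $E_\varepsilon$, I may absorb the rotations and assume that $M = m_\lambda$ is a pure dilation, so that \eqref{e var E lambda var} gives $E_{\varepsilon,\lambda}(u_M) = E_\varepsilon(u)$ and everything can be phrased in terms of $u_M$, the weight $\chi_\lambda$, and $\lambda$.

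\textbf{Step 2 (weighted $W^{2,2}$-closeness).} Using $\Delta \Id = -2\,\Id$ and the orthogonal decomposition $\Delta u_M = (\Delta u_M)^T - |\nabla u_M|^2 u_M$, I write
\begin{equation*}
  \Delta(u_M - \Id) = (\Delta u_M)^T - (|\nabla u_M|^2 - 2)\, u_M + 2(u_M - \Id).
\end{equation*}
The tangential term is controlled by the deficit in \eqref{E eps abs} applied to $u_M$ in the $E_{\varepsilon,\lambda}$-setting: $\varepsilon \int \chi_\lambda |(\Delta u_M)^T|^2 \leqslant 2\mu$. The last zero-order term is bounded by Cauchy--Schwarz together with the explicit identity $\int_{S^2} \chi_\lambda^{-1}\, dA_{S^2} = 4\pi$, the Sobolev embedding $W^{1,2}(S^2)\hookrightarrow L^p(S^2)$ for $p < \infty$, and the $W^{1,2}$-smallness from Step~1. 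The middle term is the delicate one and is rewritten via $|\nabla u_M|^2 - 2 = (\nabla u_M + \nabla\Id)\cdot\nabla(u_M - \Id)$ and then estimated by the same tools, together with an $L^4$-bound on $|\nabla u_M|$ coming from $\varepsilon \int \chi_\lambda|\nabla u_M|^4 \leqslant 16\pi\varepsilon + 2\mu$.

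\textbf{Step 3 ($\lambda$-bound and main obstacle).} A direct polar integration in stereographic coordinates yields $\int \chi_\lambda\, dA_{S^2} = \tfrac{4\pi}{3}(\lambda^2 + 1 + \lambda^{-2})$, hence
\begin{equation*}
  E_{\varepsilon,\lambda}(\Id) = 4\pi + 2\varepsilon\int \chi_\lambda\, dA_{S^2} = 4\pi + 8\pi\varepsilon + \frac{8\pi\varepsilon}{3}(\lambda - \lambda^{-1})^2.
\end{equation*}
Comparing this with $E_{\varepsilon,\lambda}(u_M) = E_\varepsilon(u) \leqslant 4\pi + 8\pi\varepsilon + \mu$ via a Taylor expansion, and using the weighted closeness of $u_M$ to $\Id$ established in Step~2 (with cross-terms handled by integration by parts to move derivatives off $\Id$), gives $\varepsilon(\lambda - \lambda^{-1})^2 \lesssim \mu + \delta^2$. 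Since $\lambda \geqslant 1$, this translates into $\varepsilon(\lambda^2 - 1) \leqslant C\delta$ after choosing $\mu = \mu(\delta)$ small enough. The main obstacle I foresee lies in Step~2: the weight $\chi_\lambda$ grows like $\lambda^2$ in a large region of $S^2$, so any na\"ive pointwise estimate produces $\lambda$-dependent constants that are useless for Step~3. The argument must rely on the weighted identity $\int \chi_\lambda^{-1} = 4\pi$ and on carefully keeping the $\chi_\lambda$ weight inside integrals throughout the integration by parts, which is precisely where the ``technical tools'' (Sobolev embedding, Poincar\'e inequality) flagged in the introduction enter.
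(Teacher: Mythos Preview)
Your Step~2 is where the argument breaks down. You try to obtain the weighted bound $\sqrt{\varepsilon}\|\sqrt{\chi_\lambda}\Delta(u_M-\Id)\|_{L^2}\le\delta$ by decomposing $\Delta(u_M-\Id)$ and bounding the pieces, but the terms $\varepsilon\int\chi_\lambda|u_M-\Id|^2$ and $\varepsilon\int\chi_\lambda(|\nabla u_M|^2-2)^2$ carry a factor $\chi_\lambda$ that can be of size $\lambda^2$ on most of $S^2$, and you have no control on $\lambda$ yet. The identity $\int\chi_\lambda^{-1}=4\pi$ does not help here because the weight appears with the wrong sign. You acknowledge this as ``the main obstacle'', but you do not resolve it, and the argument is circular: Step~2 needs the $\lambda$-bound from Step~3, while Step~3 needs the weighted closeness from Step~2. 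The paper sidesteps this entirely by proving the \emph{full} inequality \eqref{eq:delta-close2} (gradient and weighted Laplacian together) via a single contradiction/compactness argument: one assumes a bad sequence $(u_n,\varepsilon_n,\mu_n)$ and uses the energy identity of \cite{lamm06} in the case $\varepsilon_n\to 0$ (with an explicit blow-up if a bubble forms, so that $C^m_{\mathrm{loc}}$ convergence controls the weighted Laplacian term), and strong $W^{2,2}$ convergence plus rigidity of the minimizer in the case $\varepsilon_n\to\varepsilon_\infty>0$.

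Your Step~3 also misses the key structural point. Comparing $E_{\varepsilon,\lambda}(\Id)$ with $E_{\varepsilon,\lambda}(u_M)$ directly gives, at best, $\varepsilon(\lambda-\lambda^{-1})^2\lesssim\delta$, which near $\lambda=1$ is $\varepsilon(\lambda^2-1)^2\lesssim\delta$ --- quadratic, not linear, in $(\lambda^2-1)$. The paper instead differentiates in $\log\lambda$: since $u_M$ is a critical point of $E_{\varepsilon,\lambda}$ and the family $E_{\varepsilon,\tau}(u_M)=E_{\varepsilon,\lambda}((u_M)_{\lambda\tau^{-1}})$ is obtained by inner variations, the chain rule gives $\frac{d}{d\log\lambda}E_{\varepsilon,\lambda}(u_M)=0$. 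Combined with the explicit formula $\frac{d}{d\log\lambda}E_{\varepsilon,\lambda}(\Id)\ge\frac{16\pi\varepsilon}{3}(\lambda^2-1)$ and the identity $\frac{d}{d\log\lambda}E_{\varepsilon,\lambda}(v)=\varepsilon\int\chi_\lambda z(\lambda\,\cdot)|\Delta v|^2$, this yields $\varepsilon(\lambda^2-1)\le C\delta$ directly, with no circularity and no loss of an exponent.
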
 

\begin{proof} 
	We prove \eqref{eq:delta-close2} by contradiction using the energy identity in \cite{lamm06}. If \eqref{eq:delta-close2} were not true, 
	then we could find a sequence $\mu_n \downarrow 0$, a sequence $\ep_n \in (0,1]$, 
	a sequence $u_n \in W^{2,2}(S^2,S^2)$ of critical points of $E_{\ep_n}$ of degree one, 
	with $E_{\ep_n}(u_n) \leqslant 4\pi(1+2\ep_n)+ \mu_n$ 
	and $\delta > 0$ such that 
	\begin{equation} \label{eq:delta}
		\left\| \nabla \big((u_n)_M - \Id \big)  \right\|_{L^2(S^2)} + 
		\sqrt{\ep_n} \left\| \sqrt{\chi_\lambda}  \Delta \big((u_n)_M - \Id \big)  \right\|_{L^2(S^2)} 
		> \delta 
	\end{equation} 
	for all $M \in PSL(2,\mathbb{C})$. Now we have to consider two cases: 
	\\[8pt]
	\underline{$\ep_n \to 0$ :} 
	\\[8pt]
	There exists $n_0\in \N$ large enough such that $\varepsilon_n<\frac{1}{4}$ and $\mu_n<\frac{1}{2}~\forall n\geqslant n_0$. Then $E_{\varepsilon_n}(u_n)$ is uniformly bounded by $6\pi+\frac{1}{2}$ for all $n\geqslant n_0$.
	By Theorem 1.1 in \cite{lamm06} and Theorem 2 in \cite{duzkuw}, $(u_{n})$ converges to a harmonic map $u^*$ and finitely many non-trivial harmonic two-spheres $u^i:S^2\rightarrow S^2$ with $\deg (u^*)+\sum_{i=1}^{k}\deg(u^i)=1$. 
	With the result of Lemaire and Wood mentioned in the introduction, $u^*$, $u^i$ are rational maps with energy $E(u^*)= 4\pi|\deg (u^*)|, ~ E(u^i)= 4\pi |\deg (u^i)|$. Since
	\begin{align*}
		4\pi =\lim_{n\rightarrow \infty} E_{\varepsilon_n}(u_{n})= E(u^*)+\sum_{i=1}^{k}E(u^i)= 4\pi( |\deg(u^*)|+ \sum_{i=1}^{k}|\deg (u^i)|) 
	\end{align*}
	$u^*$ is either a rational map with $\deg(u^*)=1$ and $k=0$, or $u^*$ is a constant map, $k=1$ and $u^1:S^2 \rightarrow S^2$ is a harmonic map of degree one. 
	In the first case, $u^*=m^*$ with some corresponding $M^*\in PSL(2,\mathbb{C})$ which is a contradiction to (\ref{eq:delta}). 
	\\
	\\In the second case, energy concentrates in a small neighborhood of some $x_0\in S^2$ and $u_n$ converges to a constant map away from $x_0$. Without loss of generality let $x_0$ be the south pole $S$. Let $\sigma_n\downarrow 0$ and $D_n$ be a sequence of small disks around $S$ such that the energy on $S^2 \setminus D_n$ is smaller than $\sigma_n$. We project $D_n$ onto the complex plane. Then $\Pi(D_n)=B_{r_n}(0)$, the complex ball with radius $r_n$ and $r_n\rightarrow 0$.  We perform a blow-up as in \cite{lamm06} and define 
	\begin{align*}
		v_n\colon \hat{ \mathbb{C}}\rightarrow S^2,\qquad v_n(\xi)= u_n\circ \Pi^{-1}\left(\frac{\xi}{r_n}\right).
	\end{align*}
	Note that this rescaling corresponds to dilations $m_{\lambda_n}$ on the sphere with $\lambda_n=\frac{1}{r_n}$ and $D_n$ gets mapped to the lower hemisphere. 
	$v_n$ is a critical point of $E_{\tilde{\varepsilon}_n}$ with $\tilde{\varepsilon}_n= \frac{\varepsilon_n}{r_n^2}$. By Lemma 3.1 in \cite{lamm06} we have 
	\begin{align*}
		v_n\rightarrow v^*\qquad \text{in }C^m_{loc}(\mathbb{C}, S^2)\qquad\forall m\in \N,
	\end{align*}
	where $v^*\colon\mathbb{C} \rightarrow S^2$ is a non-trivial harmonic map. 
	With the point removability result of Sacks and Uhlenbeck \cite{SacksUhlenbeck} we can lift $v^*$ to a harmonic map from $S^2$ to $S^2$ with corresponding  $M^*\in PSL(2,\mathbb{C})$ such that
	\begin{align*}
		0\leftarrow&||\nabla (v_n- v^*)||_{L^2(S^2)}+ \sqrt{\varepsilon_n} \lambda_n||\Delta (v_n-v^*)||_{L^2(S^2)} 
		\\
		&\geqslant ||\nabla (v_n- v^*)||_{L^2(S^2)}+ \sqrt{\varepsilon_n} || \sqrt{\chi_{\lambda_{n}}}\Delta (v_n-v^*)||_{L^2(S^2)} 
		\\
		&= ||\nabla (u_n- (v^*)_{M^{-1}_{\lambda_n}})||_{L^2(S^2)}+ \sqrt{\varepsilon_n} ||\Delta (u_n-(v^*)_{M^{-1}_{\lambda_n}})||_{L^2(S^2)} 
		\\
		&=  ||\nabla ((u_n)_{M_{\lambda_n}(M^*)^{-1}}  - \Id)||_{L^2(S^2)}+ \sqrt{\varepsilon_n} ||\sqrt{\chi_{M_{\lambda_n}(M^*)^{-1}}}\Delta ((u_n)_{M_{\lambda_n}(M^*)^{-1}}-\Id)||_{L^2(S^2)}.
	\end{align*}
	\underline{$\ep_n \to \ep_\infty \in (0,1]$ :}
	\\[8pt]
	Here we have, at least for $n$ large enough, 
	a uniform $W^{2,2}$-bound for the sequence $u_n$. 
	With the regularity results for $\varepsilon$-harmonic maps (see  \cite{Jasi}), \cite{lamm06}, \cite{Lamm10}) we conclude that $u_n$ converges strongly in $W^{2,2}$ 
	to a limiting map $u_\infty$ which is a critical point of $E_{\ep_\infty}$ 
	and which satisfies
	\[
	E_{\ep_\infty}(u_\infty)=4\pi(1+2\ep_\infty).
	\] 
	By (\ref{E var rot}) this implies that $u_\infty$ is a rotation, contradicting \eqref{eq:delta}.
	\\\\
	To establish \eqref{eq:alphalambd2}
	we set $v := u_M$ and use Lemma \ref{propenergy} to get
	\begin{align*}
		C \ep (\lambda^2 - 1) 
		&\leqslant  \sqrt{\ep} \| \sqrt{\chi_{\lambda}}\Delta (v-\Id)\|_{L^2(S^2)} 
		\sqrt{\ep} \big(\| \sqrt{\chi_{\lambda}}\Delta v\|_{L^2(S^2)} + \| \sqrt{\chi_{\lambda}}\Delta \Id\|_{L^2(S^2)}\big). 
	\end{align*} 
	By assumption, 
	\begin{align*} 
		4 \pi (1+2\ep) + \mu \geqslant E_\ep(u) = E_{\ep,\lambda}(u_M) &= E_{\ep,\lambda}(v)  \geqslant 4 \pi + \frac{\ep}{2} \int_{S^2}\chi_\lambda |\Delta v|^2 \, dA_{S^2}, 
	\end{align*} 
	where we used 
	\[ 
	\frac12 \int_{S^2} |\nabla v|^2 \, dA_{S^2} \geqslant 4 \pi 
	\] 
	in the second inequality, which holds because $\deg(v)=1$. Thus
	\begin{equation} \label{eq:Deltav_ubd} 
		\ep \| \sqrt{\chi_{\lambda}}\Delta v\|_{L^2(S^2)}^2 \leqslant 16 \pi \ep + 2 \mu. 
	\end{equation} 
	By the triangle inequality
	\begin{equation} \label{eq:DeltaIdubd} 
		\sqrt{\ep} \| \sqrt{\chi_{\lambda}}\Delta \Id\|_{L^2(S^2)} \leqslant	\sqrt{\ep} \big( \| \sqrt{\chi_{\lambda}}\Delta(\Id - v)\|_{L^2(S^2)} 
		+ \| \sqrt{\chi_{\lambda}}\Delta v\|_{L^2(S^2)} \big). 
	\end{equation} 
	Using \eqref{eq:delta-close2}, \eqref{eq:Deltav_ubd} and \eqref{eq:DeltaIdubd} in 
	\eqref{eq:lsqbd}, we get 
	\[ 
	\ep (\lambda^2 - 1) \leqslant C \delta. 
	\] 
\end{proof}


\section{Improved bounds on $\lambda$}\label{sec eps W32 closeness}
Next we want to improve the $W^{1,2}$-closeness result from the previous section and get a better  bound on the eigenvalue $\lambda$. 

\begin{prop}
	Suppose $v$ is a critical point of $E_{\varepsilon, \lambda}.$ Setting $\psi:= v-\Id$ we have 
	\begin{align}\label{EL psi}
		\Delta \psi +\psi|\nabla \psi|^2 +2\psi\langle \nabla \psi , \nabla \Id\rangle + 2\psi + \Id |\nabla \psi|^2 + 2 \Id \langle \nabla \psi , \nabla \Id \rangle = \varepsilon\sum_{j=1}^{3}\Psi_j(\psi,\Id)
	\end{align}
	with 
	\begin{align*}
		\Psi_1(\psi,\Id)&= \chi_\lambda\bigg[ \Delta^2 \psi -4\psi +(\psi+\Id)\bigg(4 \langle \nabla \Delta \psi, \nabla \psi \rangle  +4\langle \nabla \Delta \psi, \nabla \Id \rangle +|\Delta \psi|^2 + 2 |\nabla^2 \psi|^2
		\\
		&\qquad+ 4 \langle\nabla^2 \psi, \nabla^2 \Id \rangle-4\langle \Delta \psi, \Id \rangle - 8 \langle \nabla \psi, \nabla \Id \rangle\bigg)\bigg],
		\\
		\Psi_2(\psi,\Id)&= \nabla_i\chi_\lambda\bigg[ 2\nabla_i\Delta \psi -4\nabla_i\Id + (\psi+\Id) \bigg( 4\langle \nabla_i\nabla\psi, \nabla\psi\rangle + 2 \langle \Delta \psi, \nabla_i\psi\rangle+ 4 \langle \nabla_i\nabla \psi, \nabla \Id \rangle 
		\\
		&\qquad  + 2 \langle \Delta \psi, \nabla_i\Id \rangle + 4 \langle \nabla \psi, \nabla_i\nabla\Id \rangle -4 \langle \nabla_i\psi, \Id \rangle \bigg)\bigg],
		\\
		\Psi_3(\psi,\Id)&=\Delta \chi_\lambda\bigg[ \Delta \psi + 2 \psi +(\psi+\Id )\bigg( |\nabla \psi|^2 + 2 \langle \nabla \psi, \nabla \Id \rangle \bigg)\bigg].
	\end{align*}
\end{prop}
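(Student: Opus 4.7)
The statement is the Euler--Lagrange equation (\ref{EL eps}) rewritten in terms of the error $\psi = v - \Id$. The plan is a direct substitution followed by careful bookkeeping.

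The key geometric facts are the identities for the identity map $\Id\colon S^2 \hookrightarrow \R^3$, which is harmonic with constant energy density and whose Cartesian components are first-order spherical harmonics:
\begin{equation*}
|\nabla \Id|^2 \equiv 2, \quad \Delta \Id = -2\Id, \quad \nabla_i \nabla_j \Id = -\Id\, g_{ij}, \quad \Delta^2 \Id = 4\Id.
\end{equation*}
Also, $|v|^2 = 1$ gives $2\langle \Id, \psi\rangle = -|\psi|^2$ and in particular $\langle \Id, \nabla_i \Id\rangle = 0$. From these one obtains the building-block expansions
\begin{equation*}
\Delta v = \Delta\psi - 2\Id, \quad \nabla\Delta v = \nabla \Delta\psi - 2\nabla \Id, \quad |\nabla v|^2 = |\nabla\psi|^2 + 2\langle \nabla\psi, \nabla\Id\rangle + 2.
\end{equation*}

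First I check the $\varepsilon = 0$ part: substituting into $\Delta v + v|\nabla v|^2 = 0$ yields precisely the left-hand side of (\ref{EL psi}), the outer $\pm 2\Id$ cancelling and leaving $\Delta\psi + 2\psi + (\psi + \Id)(|\nabla\psi|^2 + 2\langle \nabla\psi, \nabla\Id\rangle)$. For the $\varepsilon$-part, the product rule
\begin{equation*}
\Delta(\chi_\lambda f) = \chi_\lambda \Delta f + 2 \nabla_i \chi_\lambda\, \nabla_i f + \Delta\chi_\lambda\, f, \qquad \nabla_i(\chi_\lambda h_i) = \chi_\lambda \nabla_i h_i + \nabla_i \chi_\lambda\, h_i,
\end{equation*}
splits each of the four $\varepsilon$-terms of (\ref{EL eps}) into three summands according to whether $\chi_\lambda$ appears undifferentiated, with a gradient, or with a Laplacian. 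Collecting across all four terms produces $\Psi_1$, $\Psi_2$, $\Psi_3$ respectively.

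As a warm-up I verify $\Psi_3$ first: its coefficient $\Delta\chi_\lambda$ picks out only $\Delta v$ from $\Delta(\chi_\lambda \Delta v)$ and $v|\nabla v|^2$ from $v \Delta(\chi_\lambda |\nabla v|^2)$, and a short substitution gives the stated expression. For $\Psi_2$ one also uses $\nabla_i |\nabla v|^2 = 2\langle \nabla_i \nabla v, \nabla v\rangle$ together with the $\nabla\chi_\lambda$-part of $\Div\langle \chi_\lambda \Delta v, \nabla v\rangle$, namely $\nabla_i \chi_\lambda \langle \Delta v, \nabla_i v\rangle$, and the Obata identity $\nabla^2 \Id = -\Id\, g$ simplifies the contractions with $\nabla\Id$. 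The longest computation is $\Psi_1$, which additionally requires the Bochner identity on $S^2$,
\begin{equation*}
\Delta |\nabla v|^2 = 2|\nabla^2 v|^2 + 2\langle \nabla \Delta v, \nabla v\rangle + 2|\nabla v|^2,
\end{equation*}
together with $\Div\langle \Delta v, \nabla v\rangle = \langle \nabla \Delta v, \nabla v\rangle + |\Delta v|^2$. The main obstacle is purely bookkeeping: many $\Id$-terms are produced (e.g.\ $\Delta^2 v = \Delta^2 \psi + 4\Id$, and every factor of $v$ generates an additional $\Id$-piece), and these must be combined carefully with the $(\psi + \Id)$-prefactors so that the net result matches the stated $\Psi_1$. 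No new analytical input is required.
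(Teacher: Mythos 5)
Your proposal is correct and follows essentially the same route as the paper: substitute $v = \psi + \Id$ into the Euler--Lagrange equation \eqref{EL eps}, use the identities $\Delta\Id = -2\Id$, $|\Id|^2 = 1$, $|\nabla\Id|^2 = 2$ (the paper defers the remaining bookkeeping to a thesis), and split each $\varepsilon$-term by where the derivatives land on $\chi_\lambda$. Your extra observations (the Obata identity $\nabla^2\Id = -\Id\,g$, $\Delta^2\Id = 4\Id$, the Bochner formula, and $2\langle\Id,\psi\rangle = -|\psi|^2$) are precisely the lemmas the ``direct computation'' requires, so the argument is sound and matches the paper's.
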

\begin{proof}
We use (\ref{EL eps}) and replace $v$ with $\psi+\Id$. Note that $\Delta \Id = -2\Id$, $|\Id|^2=1$ and $|\nabla \Id|^2=2$. Then we have 
	\begin{align}\label{EL psi proof}\nonumber
		-\Delta \psi-\Delta \Id -(\psi +\Id) |\nabla \psi +\nabla \Id|^2  &= -\varepsilon \Delta \left(\chi_\lambda (\Delta \psi + \Delta \Id)\right)
		\\\nonumber
		&\quad  + \varepsilon (\psi+\Id)\chi_\lambda |\Delta \psi + \Delta \Id|^2\\\nonumber
		&\quad -\varepsilon(\psi +\Id)  \Delta\left( \chi_\lambda |\nabla \psi +\nabla \Id|^2\right)
		\\\nonumber
		&\quad - 2 \varepsilon(\psi+\Id) \Div \langle \chi_\lambda(\Delta \psi+\Delta \Id), \nabla \psi +\nabla \Id\rangle
		\\\nonumber
		\Leftrightarrow
		\Delta \psi  +\psi |\nabla \psi|^2 +2\psi +2\psi\langle\nabla \psi, \nabla \Id\rangle&+\Id |\nabla \psi|^2+2\Id \langle\nabla \psi, \nabla \Id\rangle
		\\\nonumber
		& = \varepsilon \Delta \left(\chi_\lambda (\Delta \psi + \Delta \Id)\right)
		\\\nonumber
		&\quad  - \varepsilon (\psi+\Id)\chi_\lambda |\Delta \psi + \Delta \Id|^2\\\nonumber
		&\quad +\varepsilon(\psi +\Id)  \Delta \left(\chi_\lambda |\nabla \psi +\nabla \Id|^2\right)
		\\
		&\quad + 2 \varepsilon(\psi+\Id) \Div \langle \chi_\lambda(\Delta \psi+\Delta \Id), \nabla \psi +\nabla \Id\rangle
	\end{align}
The claim now follows from a direct computation (see \cite{Jasi}, Proposition $4.3.1$ for details).
\end{proof}

Before we get to the next lemma note that we can estimate $\chi_\lambda$ and its derivatives in terms of $\lambda$. Note that we assumed $\lambda\geqslant 1$. It is easy to see that $|\chi_\lambda|\leqslant \lambda^2$. Additionally
\begin{align*}
	\p_i\chi_\lambda(\xi)&= \frac{4\xi_i(1+\lambda^2 |\xi|^2)(\lambda^2-1)}{\lambda^2 (1+|\xi|^2)^3}
\end{align*}
and  with $g_{ij}= \frac{4}{(1+|\xi|^2)^2}\delta_{ij}$
\begin{align}\label{Dchi abs}
	|\nabla_{S^2}\chi_\lambda|= \sqrt{g^{ij} \p_i\chi_\lambda\p_j \chi_\lambda}= \frac{2|\xi|(1+\lambda^2 |\xi|^2)(\lambda^2-1)}{\lambda^2 (1+|\xi|^2)^2}	
	\leqslant c (\lambda^2-1).
\end{align}
Further
\begin{align}\label{Dchi /chi abs}\nonumber
	\frac{|\nabla_{S^2} \chi_\lambda|^2}{\chi_\lambda} &= \frac{4|\xi|^2 (1+\lambda^2 |\xi|^2)^2(\lambda^2-1)^2}{\lambda^4(1+|\xi|^2)^4}\cdot \frac{\lambda^2(1+|\xi|^2)^2}{(1+\lambda^2 |\xi|^2)^2}= \frac{4|\xi|^2(\lambda^2 -1)^2}{\lambda^2(1+|\xi|^2)^2}
	\\
	&\leqslant c(\lambda^2-1).
\end{align}
To estimate the Laplacian of $\chi_\lambda$ we calculate
\begin{align*}
	\sum_{i=1}^{2}\p_{ii}^2 \chi_\lambda(\xi)= \frac{8 (\lambda^2-1) (1+2\lambda^2 |\xi|^2)}{\lambda^2(1+|\xi|^2)^3}-\frac{24(\lambda^2-1)|\xi|^2(1+\lambda^2|\xi|^2)}{\lambda^2(1+|\xi|^2)^4}
\end{align*}
and 
\begin{align}\nonumber\label{Delta chi abs}
	|\Delta_{S^2} \chi_\lambda|&= \left|\frac{1}{\sqrt{\det g}} \p_i \left( g^{ij}\sqrt{\det g} \p_j \chi_\lambda\right)\right|	
	\\
	&=\bigg|\frac{2(\lambda^2-1)(1+2\lambda^2 |\xi|^2)}{\lambda^2 (1+|\xi|^2)}-\frac{6|\xi|^2(1+\lambda^2 |\xi|^2)(\lambda^2-1)}{\lambda^2 (1+|\xi|^2)^2}\bigg|
	\leqslant c(\lambda^2-1).
\end{align}
With this we show

\begin{lemma}\label{chiDeltaPsi small in L2 lemma}
	There exist $0<\varepsilon_0,\delta_0<1$ and a constant $C>0$ depending only on $\varepsilon_0$ and $\delta_0$  such that for every $0<\varepsilon<\varepsilon_0$, every $0<\delta<\delta_0$ and every critical point $v\in W^{2,2}(S^{2},S^{2})$ of $E_{\varepsilon,\lambda}$ satisfying (\ref{eq:delta-close2}) and (\ref{eq:alphalambd2}) we have 
	\begin{align*}
		||\sqrt{\chi_\lambda}\nabla^2 \psi||_{L^2(S^2)}+ \sqrt{\varepsilon}||\chi_\lambda \nabla^3 \psi||_{L^2(S^2)}\leqslant C (\delta+\varepsilon)\lambda,
	\end{align*}
	with $\psi=v-\Id$.	
\end{lemma}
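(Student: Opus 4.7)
The plan is to test the Euler--Lagrange equation (\ref{EL psi}) for $\psi = v - \Id$ against the weighted test function $\chi_\lambda \Delta\psi$, integrate by parts on $S^2$, and identify both target quantities as coercive leading terms. Pairing the linear $\Delta\psi$ in the LHS of (\ref{EL psi}) with $\chi_\lambda\Delta\psi$ produces $\int_{S^2}\chi_\lambda |\Delta\psi|^2\,dA_{S^2}$, which via the Bochner identity on $S^2$ controls $\|\sqrt{\chi_\lambda}\,\nabla^2\psi\|_{L^2}^2$ up to lower-order terms bounded by $\|\nabla\psi\|_{L^2}\leqslant\delta$ and $|\nabla\chi_\lambda|\leqslant C(\lambda^2-1)$. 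Moving the top-order perturbation $\varepsilon\chi_\lambda\Delta^2\psi$ from $\Psi_1$ to the LHS and integrating by parts once against $\chi_\lambda\Delta\psi$ yields $\varepsilon\int_{S^2}\chi_\lambda^2|\nabla\Delta\psi|^2\,dA_{S^2}$ plus a cross term involving $\nabla\chi_\lambda$; this in turn controls $\varepsilon\|\chi_\lambda\nabla^3\psi\|_{L^2}^2$, again by Bochner.

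All remaining contributions --- the lower-order pieces of the LHS of (\ref{EL psi}) and the bulk of $\Psi_1,\Psi_2,\Psi_3$ --- are estimated either by $C(\delta+\varepsilon)^2\lambda^2$ or absorbed on the LHS. The main tools are: the pointwise bounds (\ref{Dchi abs})--(\ref{Delta chi abs}), where every derivative of $\chi_\lambda$ carries a factor $\lambda^2-1$; the $W^{1,2}$-closeness (\ref{eq:delta-close2}); the eigenvalue bound (\ref{eq:alphalambd2}), which turns every product of $\varepsilon$ with a derivative of $\chi_\lambda$ into a small $C\delta$; the constraint $|v|=1$, which forces the radial component $\psi\cdot\Id = -|\psi|^2/2$ to be quadratically small so that $\|\psi\|_{L^p}\lesssim\|\nabla\psi\|_{L^2}$ up to quadratic corrections; and 2D Sobolev embeddings together with Gagliardo--Nirenberg interpolation, e.g.\ $\|\nabla\psi\|_{L^4}^2\lesssim\|\nabla\psi\|_{L^2}\,\|\nabla^2\psi\|_{L^2}$, to control the cubic-in-derivative terms.

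The most delicate step is the cross term $\varepsilon\int_{S^2}\chi_\lambda\,\nabla\chi_\lambda\cdot\Delta\psi\,\nabla\Delta\psi\,dA_{S^2}$ produced by the integration by parts of the top-order term, together with the analogous $\nabla\chi_\lambda$- and $\Delta\chi_\lambda$-weighted pieces of $\Psi_2$ and $\Psi_3$. I would split these by Cauchy--Schwarz with a small parameter $\eta>0$ and use $|\nabla\chi_\lambda|^2/\chi_\lambda\leqslant C(\lambda^2-1)$ to obtain
\begin{align*}
\Bigl|\varepsilon\int_{S^2}\chi_\lambda\,\nabla\chi_\lambda\cdot\Delta\psi\,\nabla\Delta\psi\,dA_{S^2}\Bigr| \leqslant \eta\,\varepsilon\|\chi_\lambda\nabla\Delta\psi\|_{L^2}^2 + \tfrac{C}{\eta}\,\varepsilon(\lambda^2-1)\|\sqrt{\chi_\lambda}\Delta\psi\|_{L^2}^2,
\end{align*}
absorbing the first on the LHS and invoking (\ref{eq:alphalambd2}) to bound the second by $\tfrac{C\delta}{\eta}\|\sqrt{\chi_\lambda}\Delta\psi\|_{L^2}^2$, which is in turn absorbable on the LHS for $\delta$ sufficiently small. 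The cubic terms $\varepsilon\chi_\lambda(\psi+\Id)\langle\nabla\Delta\psi,\nabla\psi\rangle$ in $\Psi_1$ are controlled by H\"older combined with the Gagliardo--Nirenberg interpolation above, the factor $\|\nabla\psi\|_{L^2}\leqslant\delta$ providing the smallness needed for absorption. Collecting all estimates yields an inequality of the form $X\leqslant C(\delta^2+\varepsilon^2)\lambda^2 + (C\delta+C\sqrt\varepsilon)\,X$ with $X:=\|\sqrt{\chi_\lambda}\nabla^2\psi\|_{L^2}^2+\varepsilon\|\chi_\lambda\nabla^3\psi\|_{L^2}^2$, and the lemma follows by choosing $\delta_0,\varepsilon_0$ small enough.
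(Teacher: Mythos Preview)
Your overall strategy---testing \eqref{EL psi} against $\chi_\lambda\Delta\psi$, extracting the coercive terms $\int\chi_\lambda|\Delta\psi|^2$ and $\varepsilon\int\chi_\lambda^2|\nabla\Delta\psi|^2$ by one integration by parts, upgrading these to full $\nabla^2\psi$ and $\nabla^3\psi$ via commutation of derivatives, and absorbing the nonlinear remainders with Cauchy--Schwarz, Gagliardo--Nirenberg, and the bounds \eqref{Dchi abs}--\eqref{Delta chi abs}---matches the paper's proof.

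There is, however, a genuine gap in your treatment of the zero-order term $2\psi$ on the left of \eqref{EL psi}. Pairing it with $\chi_\lambda\Delta\psi$ produces $-2\int_{S^2}\langle\psi,\chi_\lambda\Delta\psi\rangle$, and bounding this by $(\delta+\varepsilon)^2\lambda^2$ (plus absorbable terms) requires control of $\|\psi\|_{L^2}$. Your justification---that the constraint $|v|=1$ forces $\psi\cdot\Id=-\tfrac12|\psi|^2$ and hence $\|\psi\|_{L^p}\lesssim\|\nabla\psi\|_{L^2}$ up to quadratic corrections---does not work. The constraint only makes the \emph{normal} component of $\psi$ quadratically small; the tangential part $\psi^T$ is unconstrained. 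Even using the pointwise inequality $|\xi|\leqslant|\nabla\xi|$ valid for tangential fields $\xi$ on $S^2$, one needs $\|\nabla\psi^T\|_{L^2}$, which differs from $\|\nabla\psi\|_{L^2}$ by a term of order $\||\psi|^2\|_{L^2}$. The resulting inequality $\|\psi\|_{L^2}\leqslant C\delta+C\|\psi\|_{L^4}^2$ cannot be closed without a priori smallness of $\|\psi\|$, which you do not have from \eqref{eq:delta-close2} alone.

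The paper fills this gap by a separate preliminary step (see \eqref{mv psi 1st est}--\eqref{mv psi}): integrate the Euler--Lagrange equation itself over $S^2$. The divergence-form terms drop out, and every remaining term is controlled using only $\|\nabla\psi\|_{L^2}\leqslant\delta$, the weighted bound $\sqrt{\varepsilon}\|\sqrt{\chi_\lambda}\Delta\psi\|_{L^2}\leqslant\delta$, and the intermediate estimate \eqref{nabla psi4 est}, yielding $\bigl|\mint_{S^2}\psi\bigr|\leqslant c(\delta+\varepsilon)$. With the mean in hand, one splits $\psi=(\psi-\bar\psi)+\bar\psi$: the oscillatory part is handled by Poincar\'e, and for the constant part one integrates by parts once more to produce a factor $\nabla\chi_\lambda$, bounded via \eqref{Dchi abs}. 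This mean-value estimate is the missing ingredient in your proposal.
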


\begin{proof}
	Note that  $||\psi||_{L^\infty(S^2)}\leqslant 2$.
	We start by estimating the mean value of $\psi$ with (\ref{eq:delta-close2}),  (\ref{EL psi proof}) and integration by parts. Note that $\int_{S^2}\Delta \left(\chi_\lambda (\Delta \psi + \Delta \Id)\right)=0$. 
	\begin{align}\label{mv psi 1st est}\nonumber
		\bigg|2\mint_{S^2} \psi dA_{S^2 }\bigg| &= \bigg|-\mint_{S^2} \bigg( \psi |\nabla \psi|^2 + 2 \psi\langle \nabla \psi, \nabla\Id\rangle + \Id |\nabla \psi|^2 + 2 \Id \langle \nabla \psi,\nabla \Id\rangle  \bigg)dA_{S^2}
		\\\nonumber
		&\qquad + \varepsilon \mint_{S^2} \bigg(\Delta \left(\chi_\lambda (\Delta \psi + \Delta \Id)\right)
		- (\psi+\Id)\chi_\lambda |\Delta \psi + \Delta \Id|^2\\\nonumber
		&\qquad\qquad +(\psi +\Id)  \Delta \left(\chi_\lambda |\nabla \psi +\nabla \Id|^2\right)
		\\\nonumber
		&\qquad\qquad + 2 (\psi+\Id) \Div \langle \chi_\lambda(\Delta \psi+\Delta \Id), \nabla \psi +\nabla \Id\rangle\bigg)dA_{S^2} \bigg|
		\\\nonumber
		&\leqslant c\int_{S^2} |\nabla \psi|^2dA_{S^2} + c\left(\int_{S^2} |\nabla \psi|^2dA_{S^2} \right)^\frac{1}{2} 
		\\\nonumber
		&\quad+ \varepsilon \bigg| \mint_{S^2}\bigg(- (\psi+\Id)\chi_\lambda |\Delta \psi + \Delta \Id|^2+(\psi +\Id)  \Delta \left(\chi_\lambda |\nabla \psi +\nabla \Id|^2\right)
		\\\nonumber
		&\qquad 
		+ 2 (\psi+\Id) \Div \langle \chi_\lambda(\Delta \psi+\Delta \Id), \nabla \psi +\nabla \Id\rangle\bigg) dA_{S^2} \bigg|
		\\\nonumber
		&\leqslant c\delta 	+ \varepsilon \bigg|\mint_{S^2}\bigg(- (\psi+\Id)\chi_\lambda |\Delta \psi + \Delta \Id|^2
		+(\Delta \psi +\Delta \Id)  \left(\chi_\lambda |\nabla \psi +\nabla \Id|^2\right)
		\\
		&\qquad 
		- 2 (\nabla\psi+\nabla \Id) \langle \chi_\lambda(\Delta \psi+\Delta \Id), \nabla \psi +\nabla \Id\rangle\bigg) dA_{S^2} \bigg|.
	\end{align}
	We estimate the remaining terms using Young's inequality, (\ref{eq:delta-close2}) and (\ref{eq:alphalambd2})
	\begin{align*}
		\varepsilon\bigg|\mint_{S^2} (\psi+\Id) \chi_\lambda |\Delta \psi +\Delta \Id|^2 dA_{S^2} \bigg|&\leqslant c\varepsilon \int_{S^2} \chi_\lambda (|\Delta \psi|^2 +1) dA_{S^2} 
		\\
		&\leqslant c(\delta^2+\varepsilon) + c\varepsilon(\lambda^2 -1)
		\\
		&\leqslant c(\delta +\varepsilon)
	\end{align*}
	and 
	\begin{align}\nonumber\label{eq 1 mean value proof}
		\varepsilon\bigg|\mint_{S^2} &(\Delta \psi +\Delta \Id) \left(\chi_\lambda |\nabla \psi +\nabla \Id|^2\right)dA_{S^2}  \bigg|
		\\\nonumber
		&\leqslant c\varepsilon \int_{S^2}\chi_\lambda (|\Delta \psi|+1)  \left(|\nabla \psi|^2  +1\right)dA_{S^2} 
		\\\nonumber
		&\leqslant c\varepsilon \int_{S^2} \chi_\lambda|\Delta \psi|^2 dA_{S^2} + c\varepsilon \int_{S^2}\chi_\lambda|\nabla \psi|^4 dA_{S^2} + c\varepsilon\int_{S^2} \chi_\lambda |\nabla \psi|^2dA_{S^2} + c\varepsilon \lambda^2
		\\
		&\leqslant c(\delta+\varepsilon)+c\varepsilon \int_{S^2} \chi_\lambda |\nabla \psi|^4dA_{S^2}  .	
	\end{align}
	For the last term we use the Sobolev embedding $W^{1,1}\hookrightarrow L^2(S^2) $, (\ref{eq:delta-close2}) and  (\ref{Dchi /chi abs})
	\begin{align*}
		\int_{S^2} \chi_\lambda |\nabla \psi|^4dA_{S^2}  &\leqslant c \left(\int_{S^2} \frac{|\nabla \chi_\lambda|}{\sqrt{\chi_\lambda}} |\nabla \psi|^2dA_{S^2}  \right)^2 + c \left(\int_{S^2} \chi_\lambda |\nabla ^2 \psi|^2dA_{S^2}  \right)\left( \int_{S^2} |\nabla \psi|^2dA_{S^2} \right)
		\\
		&\quad + \left(\int_{S^2} \sqrt{\chi_\lambda}|\nabla \psi|^2dA_{S^2} \right)^2
		\\
		&\leqslant c\delta^4 \lambda^2+ c\delta^2 \left(\int_{S^2} \chi_\lambda |\nabla ^2 \psi|^2 dA_{S^2} \right).
	\end{align*}
	To get an estimate on the full second derivative we integrate by parts and exchange derivatives.  By Lemma 2.1.2 in \cite{LammDiss} we have $|\nabla \Delta \psi-\Delta \nabla \psi|\leqslant c(|\nabla \psi|^3+ |\nabla \psi|)$ and therefore
	\begin{align*}
		\int_{S^2} \chi_\lambda |\nabla^2 \psi|^2dA_{S^2}  &\leqslant c\int_{S^2} |\nabla \chi_\lambda||\nabla \psi||\nabla ^2 \psi|dA_{S^2} +c \int_{S^2} \chi_\lambda |\Delta \psi|^2 dA_{S^2} 
		\\
		&\quad + c \int_{S^2}\chi_\lambda ( |\nabla \psi|^4 + |\nabla \psi|^2)dA_{S^2} 
		\\
		&\leqslant (c\delta^2 +\eta ) \int_{S^2} \chi_\lambda |\nabla^2 \psi|^2dA_{S^2}  + c_\eta \int_{S^2} \frac{|\nabla \chi_\lambda|^2}{\chi_\lambda}|\nabla \psi|^2 dA_{S^2} 
		\\
		&\quad + c \int_{S^2} \chi_\lambda|\Delta \psi|^2dA_{S^2}  + c\delta^2 \lambda^2.
	\end{align*}
	For $\delta, \eta >0$ small we absorb the first term to the left-hand side and with (\ref{Dchi /chi abs}) we have 
	\begin{align}\label{nabla2psi abs}
		\int_{S^2} \chi_\lambda |\nabla^2 \psi|^2 dA_{S^2} &\leqslant c\delta^2 \lambda^2 + c\int_{S^2} \chi_\lambda |\Delta \psi|^2dA_{S^2}
	\end{align}
	and thus 
	\begin{align}\label{nabla psi4 est}
		\int_{S^2} \chi_\lambda |\nabla \psi|^4 dA_{S^2} &\leqslant c\delta^4 \lambda^2 + c\delta^2 \int_{S^2} \chi_\lambda|\Delta \psi|^2dA_{S^2}. 
	\end{align}
	Going back to (\ref{eq 1 mean value proof}) and using the above estimates we get
	\begin{align*}
		\varepsilon\bigg|\mint_{S^2}(\Delta \psi +\Delta \Id)  \left(\chi_\lambda |\nabla \psi +\nabla \Id|^2\right)dA_{S^2} \bigg|&\leqslant c(\delta +\varepsilon).
	\end{align*}
	Analogously we estimate
	\begin{align*}
		\varepsilon\bigg|\mint_{S^2} - 2 (\nabla\psi+\nabla \Id) \langle \chi_\lambda(\Delta \psi+\Delta \Id), \nabla \psi +\nabla \Id\rangle dA_{S^2} \bigg|&\leqslant c(\delta +\varepsilon).
	\end{align*}
	Combining all these estimates in (\ref{mv psi 1st est}) we obtain 
	\begin{align}\label{mv psi}
		\bigg|\mint_{S^2} \psi dA_{S^2} \bigg| \leqslant c(\delta +\varepsilon).
	\end{align}
	Further we have with $W^{1,1}\hookrightarrow L^2(S^2)$, (\ref{Dchi /chi abs}), (\ref{nabla2psi abs}), (\ref{nabla psi4 est}) and Proposition \ref{p:clomob3}
	\begin{align*}
		\varepsilon\int_{S^2}\chi_\lambda^2 |\nabla \psi|^6 dA_{S^2} &\leqslant c\varepsilon \left(\int_{S^2} |\nabla \chi_\lambda| |\nabla\psi|^3  dA_{S^2}\right)^2 
		+ c\varepsilon\left(\int_{S^2} \chi_\lambda|\nabla^2 \psi| |\nabla \psi|^2  dA_{S^2}\right)^2
		\\
		&\quad + c\varepsilon \left( \int_{S^2} \chi_\lambda |\nabla \psi|^3 dA_{S^2}\right)^2
		\\
		&\leqslant c\varepsilon \left( \int_{S^2} \chi_\lambda |\nabla\psi|^4 dA_{S^2} \right)\left( \int_{S^2} \frac{|\nabla\chi_\lambda|^2}{\chi_\lambda} |\nabla \psi|^2 dA_{S^2}\right)
		\\
		&\quad + c\varepsilon \left(\int_{S^2} \chi_\lambda |\nabla^2 \psi|^2 dA_{S^2} \right)\left( \int_{S^2} \chi_\lambda |\nabla \psi|^4 dA_{S^2}\right)
		\\
		&\quad + c\varepsilon \left( \int_{S^2}\chi_\lambda |\nabla\psi|^4  dA_{S^2}\right)\left( \int_{S^2} \chi_\lambda |\nabla \psi|^2 dA_{S^2}\right)
		\\
		&\leqslant c\delta^4 \lambda^2 + c\delta^2 \int_{S^2} \chi_\lambda |\Delta \psi|^2dA_{S^2} 
	\end{align*}	
	and similarly
	\begin{align}\label{chi3nabla2psi4 est}\nonumber
		\varepsilon^2 \int_{S^2} \chi_\lambda^3 |\nabla^2 \psi|^4 dA_{S^2}  &\leqslant c\left(\varepsilon \int_{S^2} \sqrt{\chi_\lambda}|\nabla \chi_\lambda| |\nabla^2 \psi|^2  dA_{S^2} \right)^2 +c\left( \varepsilon \int_{S^2} \chi_\lambda^\frac{3}{2} |\nabla^3 \psi||\nabla^2 \psi|  dA_{S^2}  \right)^2 
		\\\nonumber
		&\quad + \left(\varepsilon \int_{S^2} \chi_\lambda^\frac{3}{2}|\nabla^2 \psi|^2 dA_{S^2}  \right)^2 
		\\\nonumber
		&\leqslant c\left(\varepsilon\int_{S^2} \frac{|\nabla \chi_\lambda|}{\sqrt{\chi_\lambda}}\chi_\lambda |\nabla^2 \psi|^2 dA_{S^2}   \right)^2 + c\left(\varepsilon \int_{S^2} \chi_\lambda^\frac{3}{2}|\nabla^2 \psi|^2  dA_{S^2}  \right)^2
		\\\nonumber
		&\quad + c\left(\varepsilon \int_{S^2} \chi_\lambda^2 |\nabla^3\psi|^2 dA_{S^2}  \right)\left( \varepsilon \int_{S^2} \chi_\lambda|\nabla^2 \psi|^2 dA_{S^2} \right)
		\\
		&\leqslant c\delta^4 \lambda^2 +c \delta^2 \varepsilon\int_{S^2} \chi_\lambda^2 |\nabla^3 \psi|^2 dA_{S^2} .
	\end{align}
	With this we can estimate the $L^2$-norm of $\chi_\lambda\nabla^3 \psi$.  As above we integrate by parts and exchange derivatives. By Lemma 2.1.2 in \cite{LammDiss} we have $|\nabla^2 \Delta  \psi - \Delta \nabla^2 \psi|\leqslant c( |\nabla^2 \psi||\nabla \psi| + |\nabla^2 \psi| + |\nabla \psi|^4 + |\nabla \psi|)$.  With Proposition \ref{p:clomob3} and the estimates above we get
	\begin{align*}
		\varepsilon\int_{S^2} \chi_\lambda^2 |\nabla^3 \psi|^2dA_{S^2} &\leqslant  c \varepsilon\int_{S^2} \chi_\lambda |\nabla \chi_\lambda| |\nabla^2 \psi||\nabla^3 \psi| dA_{S^2} + c\varepsilon \int_{S^2} \chi_\lambda^2 |\nabla \Delta \psi|^2 dA_{S^2}
		\\
		&\quad + c \varepsilon\int_{S^2} \chi_\lambda^2 |\nabla^2 \psi| \left( |\nabla^2 \psi||\nabla \psi|^2 + |\nabla^2 \psi|+ |\nabla \psi|^4 +|\nabla \psi| \right) dA_{S^2}
		\\
		&\leqslant \eta\varepsilon \int_{S^2}\chi_\lambda^2 |\nabla^3 \psi|^2 dA_{S^2} + c_\eta \varepsilon\int_{S^2} \chi_\lambda\frac{|\nabla\chi_\lambda|^2}{\chi_\lambda} |\nabla^2 \psi|^2 dA_{S^2}
		\\
		&\quad + c \varepsilon \int_{S^2} \chi_\lambda^2 |\nabla\Delta \psi|^2  dA_{S^2} + c_\eta \varepsilon^2 \int_{S^2} \chi_\lambda^3 |\nabla^2 \psi|^4 dA_{S^2} 
		+ c\int_{S^2} \chi_\lambda|\nabla \psi|^4 dA_{S^2} 
		\\
		&\quad + c\varepsilon \int_{S^2} \chi_\lambda^2 |\nabla^2 \psi|^2  dA_{S^2} 
		+ c\varepsilon \int_{S^2} \chi_\lambda^2 |\nabla \psi|^6 dA_{S^2}  + c\varepsilon \int_{S^2} \chi_\lambda^2 |\nabla \psi|^2  dA_{S^2} 
		\\
		&\quad + \eta \int_{S^2} \chi_\lambda |\nabla^2 \psi|^2  dA_{S^2} 
		\\
		&\leqslant c \delta^2 \lambda^2 + (c\delta^2 +\eta ) \int_{S^2} \chi_\lambda |\nabla^2 \psi|^2 dA_{S^2}  + (c_\eta \delta^2 +\eta) \varepsilon \int_{S^2}\chi_\lambda^2 |\nabla^3 \psi|^2 dA_{S^2} 
		\\
		&\quad + c\varepsilon\int_{S^2} \chi_\lambda^2 |\nabla \Delta \psi|^2dA_{S^2}. 
	\end{align*}
	and for $\delta,\eta>0$ small enough
	\begin{align}\label{D3psi est}
		\varepsilon\int_{S^2} \chi_\lambda^2|\nabla^3\psi|^2dA_{S^2}  \leqslant c\delta^2 \lambda^2 + (c \delta^2 +\eta ) \int_{S^2} \chi_\lambda |\nabla^2 \psi|^2 dA_{S^2}  + c\varepsilon\int_{S^2} \chi_\lambda^2 |\nabla \Delta \psi|^2dA_{S^2} .
	\end{align}
	Now we multiply (\ref{EL psi}) with $\chi_\lambda\Delta \psi$ and integrate over $S^2$. After rearranging we get
	\begin{align}\label{rhs eqn1}\nonumber
		&\int_{S^2} \left\langle (\Delta -\varepsilon \chi_\lambda \Delta^2)\psi,  \chi_\lambda \Delta \psi \right\rangle dA_{S^2}  
		\\\nonumber
		&= \int_{S^2} \bigg\langle \bigg(-\psi|\nabla \psi|^2 -2\psi\langle \nabla \psi , \nabla \Id\rangle -2\psi -\Id |\nabla \psi|^2 - 2 \Id \langle \nabla \psi , \nabla \Id \rangle\bigg)
		\\
		&\qquad + \varepsilon \bigg( \Psi_1- \chi_\lambda\Delta^2 \psi + \Psi_2+\Psi_3\bigg),\chi_\lambda \Delta \psi \bigg\rangle dA_{S^2}. 
	\end{align}
	We estimate the left-hand side further
	\begin{align*}
		&\int_{S^2}\left\langle (\Delta -\varepsilon \chi_\lambda \Delta^2)\psi,  \chi_\lambda \Delta \psi \right\rangle dA_{S^2} 
		\\
		& = \int_{S^2}\chi_\lambda|\Delta \psi|^2 dA_{S^2} + \varepsilon\int_{S^2} \chi_\lambda^2 |\nabla\Delta \psi|^2   
		dA_{S^2} +2\varepsilon \int_{S^2} \chi_\lambda \nabla \chi_\lambda\nabla \Delta \psi  \Delta \psi dA_{S^2} 
		\\
		&\geqslant \int_{S^2}\chi_\lambda|\Delta \psi|^2 dA_{S^2}  + \frac34 \varepsilon \int_{S^2} \chi_\lambda^2 |\nabla\Delta \psi|^2    dA_{S^2}
		-c\varepsilon   \int_{S^2} |\nabla \chi_\lambda|^2 |\Delta \psi|^2  dA_{S^2} . 
	\end{align*}
	Together with (\ref{eq:delta-close2}), (\ref{Dchi /chi abs}), (\ref{nabla2psi abs}) and (\ref{D3psi est}) we have 
	\begin{align}\nonumber\label{lhs est}
		&\int_{S^2}\chi_\lambda|\nabla^2 \psi|^2  dA_{S^2} + \varepsilon\int_{S^2} \chi_\lambda^2 |\nabla^3 \psi|^2  dA_{S^2} 
		\\\nonumber
		&\leqslant c\int_{S^2}\left\langle (\Delta -\varepsilon \chi_\lambda \Delta^2)\psi,  \chi_\lambda \Delta \psi \right\rangle  dA_{S^2}+ c\varepsilon  \int_{S^2}  \chi_\lambda\frac{|\nabla \chi_\lambda|^2}{\chi_\lambda} |\Delta \psi|^2   dA_{S^2}+ c\delta^2\lambda^2
		\\
		&\leqslant  c\int_{S^2}\left\langle (\Delta -\varepsilon \chi_\lambda \Delta^2)\psi,  \chi_\lambda \Delta \psi \right\rangle  dA_{S^2}+c\delta^2\lambda^2.
	\end{align}
	On the right-hand side of (\ref{rhs eqn1}) we have 
	\begin{align}\label{rhs est}\nonumber
		&\int_{S^2} \bigg\langle \bigg(-\psi|\nabla \psi|^2 -2\psi\langle \nabla \psi , \nabla \Id\rangle -2\psi -\Id |\nabla \psi|^2 - 2 \Id \langle \nabla \psi , \nabla \Id \rangle\bigg)
		\\\nonumber
		&\qquad + \varepsilon \bigg( (\Psi_1- \chi_\lambda\Delta^2 \psi) + \Psi_2+\Psi_3\bigg),\chi_\lambda \Delta \psi \bigg\rangle dA_{S^2}
		\\
		&=: I+II+III+IV.
	\end{align}
	We estimate each term separately. With Young's inequality, (\ref{eq:delta-close2}) and (\ref{nabla psi4 est}) 
	\begin{align*}
		I&=	\int_{S^2} \bigg\langle -\psi|\nabla \psi|^2 -2\psi\langle \nabla \psi , \nabla \Id\rangle -2\psi -\Id |\nabla \psi|^2 - 2 \Id \langle \nabla \psi , \nabla \Id \rangle,\chi_\lambda \Delta \psi \bigg\rangle dA_{S^2}
		\\
		&\leqslant \eta \int_{S^2} \chi_\lambda |\Delta \psi|^2  dA_{S^2}+ c_\eta \int_{S^2} \chi_\lambda\left( |\nabla \psi|^4 + |\nabla \psi|^2 \right) dA_{S^2}- 2 \int_{S^2} \langle\psi,\chi_\lambda\Delta \psi\rangle  dA_{S^2}
		\\
		&\leqslant (\eta + c_\eta\delta^2)\int_{S^2} \chi_\lambda |\Delta \psi|^2  dA_{S^2}+ c_\eta \delta^2 \lambda^2 - 2 \int_{S^2} \langle\psi,\chi_\lambda\Delta \psi\rangle dA_{S^2}.
	\end{align*}
	Let $\bar{\psi}= \mint_{S^2} \psi$ be the mean value of $\psi$. Integrating by parts, applying  the Poincar\'e inequality as well as (\ref{eq:delta-close2}), (\ref{Dchi abs}) and (\ref{mv psi}) yields
	\begin{align*}
		-2	\int_{S^2} \langle \psi,\chi_\lambda\Delta\psi\rangle dA_{S^2} &= -2\int_{S^2} \chi_\lambda\left[\left( \psi-\bar{\psi}\right) + \bar{\psi}\right] \Delta \psi dA_{S^2} 
		\\
		&\leqslant \eta \int_{S^2} \chi_\lambda |\Delta \psi|^2  dA_{S^2} + c_\eta \int_{S^2} \chi_\lambda|\psi -\bar{\psi}|^2  dA_{S^2} + 2 \bar{\psi}\int_{S^2} \nabla\chi_\lambda \nabla \psi dA_{S^2} 
		\\
		&\leqslant \eta \int_{S^2} \chi_\lambda |\Delta \psi|^2  dA_{S^2}  +c_\eta \lambda^2\int_{S^2} |\nabla \psi|^2  dA_{S^2} 
		\\
		&\quad + c(\lambda^2-1)|\bar{\psi}| \left( \int_{S^2} |\nabla \psi|^2  dA_{S^2} \right)^\frac{1}{2}
		\\
		&\leqslant \eta \int_{S^2} \chi_\lambda |\Delta \psi|^2  dA_{S^2} + c_\eta \delta(\delta+\varepsilon) \lambda^2.
	\end{align*}
	All in all  we have 
	\begin{align*}
		I&\leqslant  c\left(\eta+ c_\eta \delta^2\right) \int_{S^2} \chi_\lambda |\nabla^2 \psi|^2  dA_{S^2} + c_\eta \delta(\delta+\varepsilon) \lambda^2.
	\end{align*}
	To estimate the second term we use (\ref{eq:delta-close2}), (\ref{eq:alphalambd2}), (\ref{nabla2psi abs}), (\ref{nabla psi4 est}), (\ref{chi3nabla2psi4 est}) and (\ref{D3psi est}) 
	\begin{align*}
		II&= \varepsilon \int_{S^2}\bigg\langle \chi_\lambda\bigg[ -4\psi +(\psi+\Id)\bigg(4 \langle \nabla \Delta \psi, \nabla \psi \rangle  +4\langle \nabla \Delta \psi, \nabla \Id \rangle +|\Delta \psi|^2 + 2 |\nabla^2 \psi|^2 
		\\
		&\qquad+ 4 \langle\nabla^2 \psi, \nabla^2 \Id \rangle-4\Delta \psi \Id - 8 \langle \nabla \psi, \nabla \Id \rangle\bigg)\bigg], \chi_\lambda\Delta \psi\bigg\rangle dA_{S^2} 
		\\
		&\leqslant c \varepsilon\int_{S^2} \chi_\lambda^2 \bigg( |\nabla\Delta \psi||\Delta \psi||\nabla \psi| + |\nabla \Delta \psi||\Delta \psi|  + |\nabla^2 \psi|^3 + |\nabla^2 \psi|^2 + |\nabla \psi||\Delta \psi| +|\Delta \psi| \bigg) dA_{S^2} 
		\\
		&\leqslant \eta \varepsilon\int_{S^2} \chi_\lambda^2 |\nabla^3 \psi|^2 dA_{S^2} + \eta \int_{S^2} \chi_\lambda|\nabla^2 \psi|^2  dA_{S^2}  + c_\eta \varepsilon^2 \int_{S^2} \chi_\lambda^3|\nabla^2 \psi|^4  dA_{S^2} 
		\\
		&\quad  + c_\eta \varepsilon\int_{S^2}\chi_\lambda^2 |\nabla^2\psi|^2 dA_{S^2} + c_\eta \int_{S^2} \chi_\lambda|\nabla \psi|^4  dA_{S^2} + c\varepsilon\int_{S^2} \chi_\lambda^2 |\nabla \psi|^2 dA_{S^2} +c_\eta \varepsilon^2 \int_{S^2} \chi_\lambda^3 dA_{S^2}
		\\
		&\leqslant (\eta+ c_\eta\delta^2) \varepsilon\int_{S^2} \chi_\lambda^2 |\nabla^3 \psi|^2  dA_{S^2} + (\eta+c_\eta\delta^2 )\int_{S^2} \chi_\lambda|\nabla^2 \psi|^2 dA_{S^2} 
		+c(\delta+\varepsilon)^2\lambda^2. 
	\end{align*}
	Similarly we get for the next term with  (\ref{eq:delta-close2}), (\ref{eq:alphalambd2}), (\ref{Dchi /chi abs}), (\ref{nabla2psi abs}), (\ref{chi3nabla2psi4 est}) and (\ref{D3psi est}) 
	\begin{align*}
		III&= \varepsilon\int_{S^2} \bigg\langle\nabla_i\chi_\lambda\bigg[ 2\nabla_i\Delta \psi -4\nabla_i\Id + (\psi+\Id) \bigg( 4\langle \nabla_i\nabla\psi, \nabla\psi\rangle + 2 \langle \Delta \psi, \nabla_i\psi\rangle
		\\
		&\qquad + 4 \langle \nabla_i\nabla \psi, \nabla \Id \rangle  + 2 \langle \Delta \psi, \nabla_i\Id \rangle + 4 \langle \nabla \psi, \nabla_i\nabla\Id \rangle -4 \langle \nabla_i\psi, \Id \rangle \bigg)\bigg], \chi_\lambda\Delta \psi\bigg\rangle dA_{S^2} 
		\\
		&\leqslant c\varepsilon \int_{S^2} \chi_\lambda|\nabla \chi_\lambda|\bigg( |\nabla\Delta \psi||\Delta \psi| + |\nabla^2 \psi|^2 |\nabla\psi| + |\nabla^2 \psi|^2 + |\Delta \psi||\nabla \psi| + |\Delta \psi|\bigg)  dA_{S^2} 
		\\
		&\leqslant \eta \varepsilon\int_{S^2} \chi_\lambda^2 |\nabla \Delta \psi|^2  dA_{S^2} 
		+\eta \int_{S^2} \chi_\lambda|\nabla^2 \psi|^2 dA_{S^2}
		+ c_\eta\varepsilon \int_{S^2} \frac{|\nabla\chi_\lambda|^2}{\chi_\lambda}\chi_\lambda |\Delta \psi|^2  dA_{S^2} 
		\\
		&\quad + c\varepsilon^2 \int_{S^2} \chi_\lambda^3 |\nabla^2 \psi|^4  dA_{S^2} 
		+ c\int_{S^2} \frac{|\nabla\chi_\lambda|^2}{\chi_\lambda} |\nabla\psi|^2 dA_{S^2} 
		+ c\varepsilon\int_{S^2} \chi_\lambda|\nabla\chi_\lambda||\nabla^2 \psi|^2 dA_{S^2}  
		\\
		&\quad +  c\varepsilon \int_{S^2} \chi_\lambda|\nabla \chi_\lambda||\nabla\psi|^2  dA_{S^2} + c_\eta \varepsilon^2 \int_{S^2} \chi_\lambda|\nabla \chi_\lambda|^2 dA_{S^2}
		\\
		&\leqslant ( \eta+ c\delta^2 )\varepsilon\int_{S^2} \chi_\lambda^2 |\nabla^3 \psi|^2  dA_{S^2}+ \eta \int_{S^2} \chi_\lambda|\nabla^2 \psi|^2dA_{S^2}
		+c_\eta(\delta+\varepsilon)^2\lambda^2.
	\end{align*}	
	and finally  we have with (\ref{Delta chi abs})
	\begin{align*}
		IV&= \varepsilon \int_{S^2} \bigg\langle \Delta \chi_\lambda\bigg[ \Delta \psi + 2 \psi +(\psi+\Id )\bigg( |\nabla \psi|^2 + 2 \langle \nabla \psi, \nabla \Id \rangle \bigg)\bigg], \chi_\lambda\Delta \psi\bigg\rangle dA_{S^2}
		\\
		&\leqslant c\varepsilon \int_{S^2} \chi_\lambda|\Delta \chi_\lambda| \bigg( |\Delta \psi|^2+ |\Delta \psi||\nabla\psi|^2 + |\Delta \psi||\nabla\psi| + |\Delta \psi| \bigg) dA_{S^2}
		\\
		&\leqslant c\varepsilon\int_{S^2} \chi_\lambda|\Delta \chi_\lambda||\Delta \psi|^2 dA_{S^2} + c\varepsilon\int_{S^2} \chi_\lambda|\Delta \chi_\lambda||\nabla \psi|^4  dA_{S^2}+ c\varepsilon\int_{S^2} \chi_\lambda|\Delta \chi_\lambda||\nabla \psi|^2 dA_{S^2}
		\\
		&\quad + \eta \int_{S^2} \chi_\lambda|\Delta \psi|^2 dA_{S^2} + c_\eta \varepsilon^2\int_{S^2} \chi_\lambda |\Delta \chi_\lambda|^2 dA_{S^2}
		\\
		&\leqslant \eta \int_{S^2} \chi_\lambda|\Delta \psi|^2 dA_{S^2} +c(\delta+\varepsilon)^2\lambda^2.
	\end{align*}
	Now we put  (\ref{lhs est}), (\ref{rhs est}) and the above estimates together. Choosing $\delta$ and $\eta$ small enough so that we can absorb these terms to the left-hand side we arrive at  
	\begin{align*}
		\int_{S^2}\chi_\lambda|\nabla^2 \psi|^2 dA_{S^2} +\varepsilon\int_{S^2} \chi_\lambda^2 |\nabla^3 \psi|^2  dA_{S^2}&\leqslant c(\delta +\varepsilon)^2
		\lambda^2. 
	\end{align*}
	
\end{proof}

\begin{cor}\label{lambda2-1 small cor}
	There exist $\varepsilon_0>0$ and $\delta_0>0$, possibly smaller than those in Lemma \ref{chiDeltaPsi small in L2 lemma}, such that for every $0<\varepsilon\leqslant \varepsilon_0$, $0<\delta\leqslant \delta_0$ and every critical point $v\in W^{2,2}(S^2,S^2)$ of $E_{\varepsilon,\lambda}$ satisfying (\ref{eq:delta-close2}) and (\ref{eq:alphalambd2}),  we have 	
	\begin{align*}
		\lambda^2-1\leqslant c(\delta+\varepsilon).
	\end{align*}
	 Moreover, the following estimate holds
	\begin{align}\label{nabla3 psi est cor}
		||\psi||_{L^\infty(S^2)}+ ||\psi||_{W^{2,2}(S^2)}+\sqrt{\varepsilon }||\nabla^3\psi||_{L^2(S^2)}\leqslant c(\delta+\varepsilon).
	\end{align}
	
\end{cor}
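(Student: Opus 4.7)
The plan is to revisit the derivation of $\varepsilon(\lambda^2-1)\leqslant C\delta$ inside the proof of Proposition \ref{p:clomob3}, but now armed with the sharp bound on $\|\sqrt{\chi_\lambda}\,\nabla^2\psi\|_{L^2}$ provided by Lemma \ref{chiDeltaPsi small in L2 lemma}. The observation that makes everything work is that the Lemma removes a $\sqrt{\varepsilon}$ factor which was present at the Proposition's stage, and this is exactly enough to cancel the $\varepsilon$ that appears on the left of the stationarity identity for $\lambda$.

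First I would reproduce the starting point of the derivation of (\ref{eq:alphalambd2}): since $v$ is a critical point of $E_{\varepsilon,\lambda}$ one has $\tfrac{d}{d\log\lambda}E_{\varepsilon,\lambda}(v)=0$, and the explicit computation leading to (\ref{eq:lowerlabd}) gives $\tfrac{d}{d\log\lambda}E_{\varepsilon,\lambda}(\Id)\geqslant\tfrac{16\pi\varepsilon}{3}(\lambda^2-1)$. Writing $v=\Id+\psi$ so that $|\Delta\Id|^2-|\Delta v|^2=-2\langle\Delta\psi,\Delta\Id\rangle-|\Delta\psi|^2$ and applying Cauchy--Schwarz together with $\|z\|_{L^\infty}\leqslant 1$ produces
\[
\tfrac{16\pi\varepsilon}{3}(\lambda^2-1)\leqslant 2\varepsilon\|\sqrt{\chi_\lambda}\Delta\psi\|_{L^2}\|\sqrt{\chi_\lambda}\Delta\Id\|_{L^2}+\varepsilon\|\sqrt{\chi_\lambda}\Delta\psi\|_{L^2}^2.
\]
The decisive input is now fed in: by Lemma \ref{chiDeltaPsi small in L2 lemma} and the pointwise inequality $|\Delta\psi|^2\leqslant 2|\nabla^2\psi|^2$ one has $\|\sqrt{\chi_\lambda}\Delta\psi\|_{L^2}\leqslant C(\delta+\varepsilon)\lambda$ \emph{with no $\sqrt{\varepsilon}$ factor in front}, while the computation behind (\ref{eq:Eepl}) gives $\|\sqrt{\chi_\lambda}\Delta\Id\|_{L^2}^2=4\int_{S^2}\chi_\lambda\,dA_{S^2}=\tfrac{16\pi}{3}(\lambda^2+1+\lambda^{-2})\leqslant C\lambda^2$. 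Substituting and dividing by $\varepsilon$ leads to $\lambda^2-1\leqslant C(\delta+\varepsilon)\lambda^2$, and absorbing the $\lambda^2$ on the right-hand side for $\delta+\varepsilon$ small yields the first claim $\lambda^2-1\leqslant c(\delta+\varepsilon)$.

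With $\lambda^2$ now controlled by $1+c(\delta+\varepsilon)$, the explicit form $\chi_\lambda(\xi)=\frac{(1+\lambda^2|\xi|^2)^2}{\lambda^2(1+|\xi|^2)^2}$ is trapped pointwise between $\lambda^{-2}$ and $\lambda^2$, so after possibly shrinking $\varepsilon_0$ and $\delta_0$ one may assume $\tfrac12\leqslant\chi_\lambda\leqslant 2$ on all of $S^2$. The weighted bounds from Lemma \ref{chiDeltaPsi small in L2 lemma} then immediately convert to the unweighted bounds $\|\nabla^2\psi\|_{L^2}+\sqrt{\varepsilon}\|\nabla^3\psi\|_{L^2}\leqslant c(\delta+\varepsilon)$. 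To add $L^2$-control on $\psi$ itself I would combine the Poincar\'e inequality on $S^2$ with (\ref{eq:delta-close2}) to obtain $\|\psi-\bar\psi\|_{L^2}\leqslant C\delta$, and then use the mean-value estimate (\ref{mv psi}) $|\bar\psi|\leqslant c(\delta+\varepsilon)$ already proved inside the preceding lemma. This delivers $\|\psi\|_{W^{2,2}(S^2)}\leqslant c(\delta+\varepsilon)$, and the Sobolev embedding $W^{2,2}(S^2)\hookrightarrow C^0(S^2)$ finally upgrades this to $\|\psi\|_{L^\infty(S^2)}\leqslant c(\delta+\varepsilon)$.

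The main hurdle is conceptual rather than computational: one has to recognise that the gain over Proposition \ref{p:clomob3} is the removal of the $\sqrt{\varepsilon}$ factor in the second-derivative control of $\psi$, which Lemma \ref{chiDeltaPsi small in L2 lemma} supplies. That missing $\sqrt{\varepsilon}$ is precisely what allows the right-hand side of the stationarity identity to cancel the $\varepsilon$ on its left and thereby bootstrap $\varepsilon(\lambda^2-1)\leqslant C\delta$ into $\lambda^2-1\leqslant c(\delta+\varepsilon)$; everything else is then bookkeeping via Poincar\'e, Sobolev, and the smallness of $\lambda^2-1$.
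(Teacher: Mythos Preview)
Your proof is correct and follows essentially the same route as the paper: feed the unweighted-in-$\varepsilon$ bound from Lemma~\ref{chiDeltaPsi small in L2 lemma} back into the stationarity identity (\ref{eq:lsqbd}) to cancel the $\varepsilon$ on the left, absorb the resulting $\lambda^2$ on the right, and then use $\tfrac12\leqslant\chi_\lambda\leqslant 2$, Poincar\'e, (\ref{mv psi}), and Sobolev to upgrade to the full $W^{2,2}$ and $L^\infty$ control. The only cosmetic difference is that the paper factors $|\Delta\Id|^2-|\Delta v|^2$ as a product $\langle\Delta(\Id-v),\Delta(\Id+v)\rangle$ and bounds $\|\sqrt{\chi_\lambda}\Delta\Id\|_{L^2}$ via the triangle inequality, whereas you expand it additively and compute $\|\sqrt{\chi_\lambda}\Delta\Id\|_{L^2}$ directly from (\ref{eq:Eepl}); both are fine.
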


\begin{proof}
	With (\ref{eq:lsqbd}) and Lemma \ref{chiDeltaPsi small in L2 lemma} we have 
	\begin{align*}
		C \ep (\lambda^2 - 1) &\leqslant \frac{d}{d\, \log \lambda} E_{\ep,\lambda}(\Id)-\frac{d}{d\, \log \lambda} E_{\ep,\lambda}(v) 
		\notag \\ 
		&= \ep \int_{S^2} \chi_\lambda z(\lambda \cdot) (|\Delta \Id|^2 - |\Delta v|^2) \, dA_{S^2} \notag \\ 
		&\leqslant  \sqrt{\ep} \| \sqrt{\chi_{\lambda}}\Delta (v-\Id)\|_{L^2(S^2)} 
		\sqrt{\ep} \big(\| \sqrt{\chi_{\lambda}}\Delta v\|_{L^2(S^2)} + \| \sqrt{\chi_{\lambda}}\Delta \Id\|_{L^2(S^2)}\big)
		\\
		&\leqslant c \varepsilon (\delta+\varepsilon) \lambda^2 
		= c\varepsilon (\delta+\varepsilon) (\lambda^2 -1) +c\varepsilon(\delta+\varepsilon).
	\end{align*}
	For $\varepsilon+\delta$ small enough
	\begin{align*}
		\lambda^2-1\leqslant c(\delta+\varepsilon)
	\end{align*}
	and $\lambda^2\leqslant 2$. But then 
	\begin{align*}
		\frac{1}{2}\leqslant \frac{1}{\lambda^2}\leqslant |\chi_\lambda|\leqslant \lambda^2 \leqslant 2.
	\end{align*}
	With this and Lemma \ref{chiDeltaPsi small in L2 lemma} we get 
	\begin{align*}
		\int_{S^2} |\nabla^2 \psi|^2 dA_{S^2}+ \varepsilon\int_{S^2} |\nabla^3 \psi|^2 dA_{S^2}\leqslant c(\delta +\varepsilon)^2.
	\end{align*}
	By the Sobolev embedding $W^{2,2}\hookrightarrow L^\infty(S^2)$, the Poincar\'e inequality and (\ref{mv psi}) it follows that
	\begin{align*}
		||\psi||_{L^\infty(S^2)}&\leqslant c||\psi ||_{W^{2,2}(S^2)} 
		\leqslant c(\delta+\varepsilon) + c||\psi-\bar{\psi}||_{L^2(S^2)}+ c|\bar{\psi}|
		\\
		&\leqslant c(\delta +\varepsilon) + c||\nabla \psi||_{L^2(S^2)}
		\leqslant c(\delta+\varepsilon).
	\end{align*}
	
\end{proof}

\begin{rmk}
	Note that 
	\begin{align}\label{chi-1 abs}
		|\chi_\lambda-1| &
		\leqslant \begin{cases}
			\lambda^2-1,\quad&\text{if }\chi_\lambda\geqslant 1\\ 1-\frac{1}{\lambda^2},\quad&\text{if }\chi_\lambda<1
		\end{cases}\bigg\}
		\leqslant \lambda^2-1
		\leqslant c(\delta+\varepsilon).
	\end{align}
	
\end{rmk}


\section{Optimal M\"obius transformation}

This section follows in parts chapter 6 in \cite{lammmalmic}. For a better comprehension of the arguments we repeat some of the calculations here.

So far our results suggest that there exists $M\in PSL(2,\mathbb{C})$ such that $u_M$ is close to the identity, however there is still some freedom in the choice of $M$. To show that $u_M=\Id$ and $\lambda$ is equal to one, we have to choose the optimal M\"obius transformation $M$ with corresponding eigenvalue $\lambda$ which minimizes $\| \nabla (u_M - \Id) \|_{L^2(S^2)}$. 
This can be done as in section 6 of \cite{lammmalmic}.

From now on we choose the optimal $M\in PSL(2,\mathbb{C})$ that minimizes $\| \nabla (u_M -\Id) \|_{L^2(S^2)}$. Let $v := u_M$ satisfy the assumptions of Lemma \ref{chiDeltaPsi small in L2 lemma} and Corollary \ref{lambda2-1 small cor}.  Our goal is to improve the bound in (\ref{nabla3 psi est cor}) to $\sqrt{\varepsilon}(\lambda^2-1)$.  In (\ref{EL psi}), a problematic term to estimate  is $\Id \langle \nabla \psi,\nabla \Id\rangle$, because it involves $\nabla\psi$ of order one. To eliminate this term we exploit that it is an element of the normal space at the identity.

By (\ref{nabla3 psi est cor}) $v$ converges pointwise to the identity map as $\delta$ and $\varepsilon$ tend to zero. Thus we can write $v$ in terms of the tangential component of $\psi$ at the identity
\[ 
v = \Id + \psi = \exp_{\Id} \hat{\psi} \quad (= \Id + \hat{\psi}  + O(|\hat{\psi}|^2)), 
\qquad \hat{\psi} \in T_{\Id} W^{3,2}(S^2,S^2).  
\] 
In the following we want to work with $\hat{\psi}$ instead of $\psi$. To do this we need formulas to express $\hat{\psi}$ in terms of $v$ and $\psi$. Let $\mathbf{x} = (x,y,z) \in S^2 \subset \R^3$, then 
\begin{align} 
	v(\mathbf{x}) &= \mathbf{x} \sqrt{1 - |\hat{\psi}(\mathbf{x})|^2} 
	\, + \, \hat{\psi}(\mathbf{x}), \qquad \hat{\psi}(\mathbf{x}) \cdot \mathbf{x} \equiv 0, \notag 
	\\ 
	\hat{\psi}(\mathbf{x}) &= \psi(\mathbf{x}) + \tfrac12 |\psi(\mathbf{x})|^2 \mathbf{x}\,, \qquad \qquad
	\psi(\mathbf{x}) = \hat{\psi}(\mathbf{x}) -
	\left( 1 - \sqrt{1 - |\hat{\psi}(\mathbf{x})|^2}\right) \mathbf{x},  \label{eq:psipsih} 
	\\ 
	|\hat{\psi}|^2 &= |\psi|^2(1 - \tfrac14|\psi|^2) \leqslant |\psi|^2 = 2(1 - \sqrt{1 - |\hat{\psi}|^2}). \notag 
\end{align} 
With this we get for the error terms of higher order
\begin{align}\label{fehler eqn}\begin{split}
		|\nabla \psi -\nabla \hat{\psi}|&=O(|\hat{\psi}| \,  |\nabla \hat{\psi}|) + O(|\hat{\psi}|^2)
		= O(|\psi| \,  |\nabla \psi|) + O(|\psi|^2),
		\\
		|\nabla^2 \psi -\nabla^2 \hat{\psi}| &=O(|\hat{\psi}| |\nabla^2 \hat{\psi}|) 
		+ O(|\nabla \hat{\psi}|^2) + O(|\hat{\psi}|^2)= O(|\psi| |\nabla^2 \psi|) 
		+ O(|\nabla \psi|^2) +O(|\psi|^2),
		\\
		|\nabla^3 \psi-\nabla^3\hat{\psi}|&= O(|\hat{\psi}||\nabla^3\hat{\psi}|)+ O(|\nabla^2\hat{\psi}|^2)+O(|\nabla\hat{\psi}|^2 )+O(|\hat{\psi}|^2)
		\\
		&= O(|\psi||\nabla^3\psi|)+O(|\nabla^2\psi|^2)+O(|\nabla\psi|^2)+O(|\psi|^2).
	\end{split}
\end{align} 
Let $\mathbf{x}^T$ be the orthogonal projection of $\mathbf{x}\in S^2$ onto the tangent space $T_\x S^2$. The tangential component of (\ref{EL psi}) is given by
\begin{align}\nonumber\label{EL tang}
	\bigg[  \varepsilon \chi_\lambda \lap^2 \psi-\lap \psi  -2\psi -4\varepsilon& \chi_\lambda\psi  \bigg]^T
	= \bigg[(\Id +\psi ) |\nabla \psi|^2 + 2(\Id +\psi) \langle \nabla \psi, \nabla \Id\rangle 
	\\\nonumber
	&\qquad\qquad -\varepsilon \bigg(\Psi_1(\psi,\Id)- \chi_\lambda(\lap^2\psi-4\psi)+ \Psi_2(\psi,\Id) +\Psi_3(\psi,\Id)\bigg)\bigg]^T
	\\\nonumber
	\Leftrightarrow
	-\varepsilon (\lap (\lap\hat{\psi} )^T)^T+(\lap \hat{\psi})^T &+2\hat{\psi} +4 \varepsilon \hat{\psi} 
	\\\nonumber
	=-2&\hat{\psi} \langle \nabla \hat{\psi}, \nabla\Id\rangle+ O(|\hat{\psi}||\nabla^2 \hat{\psi}|) + O(|\nabla \hat{\psi}|^2)+O(|\hat{\psi}||\nabla\hat{\psi}|) +O(|\hat{\psi}|^2)
	\\\nonumber
	+ \varepsilon& \bigg( (\chi_\lambda-1)(\lap (\lap\hat{\psi})^T)^T +\chi_\lambda (\lap(\lap  (\psi-\hat{\psi}))^T )^T +\chi_\lambda (\lap(\lap\psi)^N)^T \bigg)
	\\
	-\varepsilon& \bigg[ \Psi_1- \chi_\lambda\lap^2 \psi +4\chi_\lambda\psi +\Psi_2+  \Psi_3\bigg]^T.
\end{align} 
We shall denote by $J_\varepsilon$ the operator on the left hand side of \eqref{EL tang} 
and note that it can be written as 
\begin{align*}
 J_\varepsilon = \left( 1 -\varepsilon((\lap \cdot )^T -2)\right) ((\lap \cdot)^T +2). 
\end{align*}
Observe that $\left( 1 -\varepsilon((\lap \cdot )^T -2)\right)$ is a positive operator 
and therefore $J_\varepsilon$ has the same kernel as $J:=((\lap \cdot)^T +2)$. 

Let $\hat{\psi}= \hat{\psi}_0+ \hat{\psi}_1$, 
where $\hat{\psi}_0\in \operatorname{ker}J_\varepsilon$ and 
$\hat{\psi}_1\in (\operatorname{ker}J_\varepsilon)^\perp$ 
with respect to the inner product in $L^2$. 
$\lap^T\hat{\psi}_0=-2\hat{\psi}_0$ since $J_\varepsilon$ and $J$ have the same kernel. 
Further note that $J_\varepsilon$ is self-adjoint with respect to 
the inner product in $L^2(S^2, TS^2)$ and 
\begin{align*}
	\int_{S^2} \langle J_\varepsilon \hat{\psi}_1, (\lap \hat{\psi}_0)^T\rangle dA_{S^2} 
	= -2 \int_{S^2} \left\langle J_\varepsilon\hat{\psi}_1, \hat{\psi}_0\right\rangle dA_{S^2} 
	=-2 \int_{S^2} \left\langle \hat{\psi}_1, J_\varepsilon\hat{\psi}_0\right\rangle dA_{S^2} 
	=0.
\end{align*}
With this we get 
\begin{align}\label{J_eps 1st equ}\nonumber
	\int_{S^2} \left\langle J_\varepsilon \hat{\psi}, (\lap \hat{\psi})^T\right\rangle dA_{S^2}
	&= \int_{S^2} \left\langle J_\varepsilon \hat{\psi}_1, (\lap \hat{\psi}_1)^T\right\rangle dA_{S^2}
	\\\nonumber
	&= \varepsilon\int_{S^2} |\nabla (\lap \hat{\psi}_1)^T|^2 dA_{S^2}+ \int_{S^2} |(\lap \hat{\psi}_1)^T|^2 dA_{S^2}
	\\
	&\quad +(2+4\varepsilon) \int_{S^2} \langle \hat{\psi}_1,(\lap\hat{\psi}_1)^T\rangle dA_{S^2}.
\end{align}
We want to control the $\sqrt{\varepsilon} W^{3,2}$-norm of $\hat{ \psi}$ by the left-hand side of (\ref{J_eps 1st equ}). 
The first two terms on the right-hand side are positive, which leaves us with the last term. 
To get a control on this term we decompose the vectorfield $\hat{\psi}_1$ into eigenvectorfields of $(\lap \cdot )^T$. 
In \cite{lammmalmic}, these eigenvectorfields were obtained by comparing $(\lap \cdot )^T$ with 
the (rough) connection Laplacian  $\Delta_{TS^2}$ and with the Hodge Laplacian, 
but here we shall proceed more directly and derive them from gradients of the eigenfunctions of $\lap$ on $L^2(S^2)$. 

In the computations that follow, it will be convenient to 
denote the coordinates of $\x \in \R^3$ by $(x_1,x_2,x_3)$, 
the corresponding partial derivatives by $\del_1, \del_2,\del_3$, 
the $\R^3$ gradient by $\del = (\del_1, \del_2,\del_3)$ 
and the $\R^3$ laplacian $\del_1^1+\del_2^2+\del_3^2$ by $\del^2$. 
If $f$ is the restriction to $S^2$ of $F \in C^{\infty}(\R^3)$ 
and $\nabla f$ is its $S^2$-gradient then we have: 
\begin{equation} \label{eq:grad_radial} 
\nabla f(\x) = \del F(\x) - (\x \cdot \del F(\x)) \x 
\end{equation} 
where $\x \cdot \del F(\x) = \sum_{i=1}^3 x_i \del_iF(\x)$. 
The relation between the spherical Laplacian $\lap f$  and $\del^2 F$ is given by 
\begin{equation} \label{eq:laplace_radial} 
\lap f(\x) = \del^2 F(\x) - 2 \x \cdot \del F(\x) - \sum_{i,j=1}^3 x_ix_j \del_i \del_j F(\x). 
\end{equation} 
Let $F = P_k$, a homogeneous polynomial of degree $k$ that is harmonic on $\R^3$ 
and let $p_k$ be the restriction of $P_k$ to $S^2$. 
We see from \eqref{eq:laplace_radial} that $p_k$ is an eigenfunction of $\lap$ 
with eigenvalue equal to $-k(k+1)$. All eigenfunctions of $\lap$ on $S^2$ are of this form. 
We claim that 
\begin{equation}\label{eq:eigengrad} 
(\lap(\nabla p_k))^T = -k(k+1) \nabla p_k. 
\end{equation} 
To prove this claim, observe from \eqref{eq:grad_radial} that, as an $\R^3$-valued function, 
\[ 
\nabla p_k(\x) = \del P_k(\x) - k (P_k(\x)) \x. 
\] 
The components $\del_i P_k(\x)$ of $\del P_k(\x)$ are 
harmonic homogeneous polynomials of degree $(k-1)$ and therefore 
\[ 
\lap (\del P_k(\x)) = -k(k-1) \del P_k(\x). 
\] 
In the next calculation we let $e_1,e_2$ be an orthonormal basis of $T_{\x}S^2$ 
so that $D_{e_i}e_j(\x)=0$, where $D$ is the Levi-Civita covariant derivative on $TS^2$.  
We compute: 
\begin{align*} 
\lap ((P_k(\x)) \x) &= (\lap P_k(\x)) \x + 2 \sum_{i=1}^2e_i( P_k(\x)) e_i(\x) + (P_k(\x))(\lap \x) \\ 
&= -k(k+1) (P_k(\x)) \x + 2 \nabla p_k(\x) - 2 (P_k(\x))\x 
\end{align*} 
where we have used $\nabla P_k(\x) = (\del P_k(\x))^T = \nabla p_k(\x)$ 
and $\lap(\x) = -2\x$. We can now complete the proof of \eqref{eq:eigengrad}: 
\begin{align*} 
(\lap(\nabla p_k))^T &= (\lap (\del P_k(\x)))^T - k(\lap ((P_k(\x)) \x))^T \\ 
&= -k(k-1) \nabla p_k - 2k \nabla p_k \\ 
&= -k(k+1) \nabla p_k. 
\end{align*} 

To proceed further with the spectral analysis of $(\lap \cdot )^T$ 
we recall the Helmholtz-Hodge decomposition\footnote{A fuller discussion of 
the Helmholtz-Hodge decomposition of vector fields and their relation to knots in $\R^3$ 
can be found in \cite{CantarellaDeTurckGluck}} 
of vector fields on $S^2$. Let $\star$ denote 
the anticlockwise rotation by $90^{\circ}$ in $TS^2$. 
The \emph{curl} operator $\nabla \wedge$ is then defined by 
\[ 
\nabla \wedge \xi := - \nabla \cdot(\star \, \xi) \quad\text{for all $C^1$-vector fields $\xi$ on $S^2$}. 
\] 
It is easy to check that $\nabla \wedge (\nabla f) = 0 \ \forall \, f \in C^2(S^2)$. 
Since $S^2$ is simply connected, if $\nabla \wedge \xi = 0$ then there exists 
$f \colon S^2 \to \R$ (unique up to a constant) such that $\xi = \nabla f$. 
Similarly, if $\nabla \cdot \xi = 0$ then there exists 
$g \colon S^2 \to \R$ (unique up to a constant) such that $\xi = \star \nabla g$. 
We claim that if $\nabla \wedge \xi = 0$ and $\nabla \cdot \xi = 0$ then $\xi = 0$. 
This is because, writing $\xi$ as $\nabla f$ we get that 
\[ 
0 = \int_{S^2} f \nabla \cdot \xi dA_{S^2} = \int_{S^2} f (\lap f) dA_{S^2} = 
- \int_{S^2} |\nabla f|^2 dA_{S^2}. 
\] 
It follows that $\nabla f = 0$ and therefore $\xi = 0$, as claimed. 

Given any vector field $\xi$ on $S^2$ we define functions $f$ and $g$ 
to be the unique (up to constants) solutions of 
\[ 
\lap f = \nabla \cdot \xi, \qquad \lap g = \nabla \wedge \xi. 
\] 
Then, the divergence and curl of $(\xi - \nabla f - \star \nabla g)$ both vanish and therefore, 
\begin{equation} \label{eq:HHdecomp} 
\xi = \nabla f + \star \nabla g. 
\end{equation} 
\eqref{eq:HHdecomp} is called the Helmholtz-Hodge decomposition of $\xi$. 
It is an $L^2$-orthogonal decomposition because 
\[ 
\int_{S^2} \nabla f \cdot (\star \nabla g) dA_{S^2} = 0. 
\] 
We next want to show that if $p_k$ is as in \eqref{eq:eigengrad} we also have 
\begin{equation}\label{eq:star_eigengrad} 
(\lap(\star \nabla p_k))^T = -k(k+1) \star \nabla p_k. 
\end{equation} 
(It is worth pointing out that \eqref{eq:star_eigengrad} is immediate for the Hodge Laplacian.) 
Observe that if a vector field $\xi$ on $S^2$ is viewed as an $\R^3$-valued function that satisfies $\xi(\x) \cdot \x = 0$ for all $\x \in S^2$ then 
\[ 
\star \, \xi(\x) =  \x \times \xi(\x), 
\] 
where $\times$ is the cross-product on $\R^3$. Furthermore, if $\mathbf{a}$ is a fixed vector in $\R^3$ then 
\[ 
\mathbf{a} \cdot (\star \, \xi) = \xi \cdot (\mathbf{a} \times \x) 
\] 
and 
\[ 
\lap(\mathbf{a} \cdot (\star \, \xi)) = (\lap \xi) \cdot (\mathbf{a} \times \x) + 
2 \sum_{i=1}^2 e_i(\xi) \cdot (\mathbf{a} \times e_i) - 2\xi \cdot (\mathbf{a} \times \x). 
\] 
We now fix $\x \in S^2$ and let $\mathbf{a}$ run over the orthonormal basis $\x, e_1(\x), e_2(\x)$ 
oriented so that $\x \times e_1 = e_2$ to get 
\begin{align*} 
\lap(\star \, \xi) &= (-(\lap \xi - 2\xi) \cdot e_2 + 2 e_2(\xi) \cdot \x)e_1 \\ 
&\hphantom{=} +((\lap \xi - 2\xi) \cdot e_1 - 2 e_1(\xi) \cdot \x)e_2 \\ 
&\hphantom{=} + 2 (e_1(\xi)\cdot e_2 - e_2(\xi) \cdot e_1) \x. 
\end{align*} 
By differentiating the relation $\xi(\x) \cdot \x = 0$ we see that 
$e_1(\xi) \cdot \x + \xi \cdot e_1 = 0$ and $e_2(\xi) \cdot \x +\xi \cdot e_2 = 0$. Therefore 
\begin{equation}\label{eq:lapstar} 
(\lap(\star \, \xi))^T = -(\lap \xi \cdot e_2)e_1 + (\lap \xi \cdot e_1)e_2 = \star (\lap \xi)^T. 
\end{equation} 
\eqref{eq:star_eigengrad} follows immediately from \eqref{eq:eigengrad} and \eqref{eq:lapstar}. 

Let $\ph_0, \ph_1, \ph_2, \dotsc$ be a complete orthonormal set of eigenfunctions of $\lap$ on $S^2$. 
(We have remarked that these eigenfunctions are the restrictions of harmonic homogeneous polynomials on $\R^3$ to $S^2$.) 
Then, because of the decomposition \eqref{eq:HHdecomp}, the collection 
$\nabla \ph_1, \star \nabla \ph_1, \nabla \ph_2, \star \nabla \ph_2, \dotsc$ 
is a complete orthogonal set of eigenvectorfields of $(\lap \cdot)^T$. 
($\ph_0$ is constant and therefore $\nabla \ph_0 = 0$.) 
Furthermore, by \eqref{eq:eigengrad} and \eqref{eq:star_eigengrad}, 
the eigenvalues corresponding to $\nabla \ph_j$ and $\star \nabla \ph_j$ are the same as the eigenvalue 
(which is of the form $-k(k+1), \ k \in \N$) of $\ph_j$. 
In particular, the lowest eigenvalue of $-(\lap \cdot)^T$ is $2$ and it has multiplicity equal to $6$. 
The corresponding eigenvectorfields are $\nabla x_1, \ \nabla x_2, \ \nabla x_3$ (whose flows are dilatations) 
and 
$\star \nabla x_1, \ \star \nabla x_2, \ \star \nabla x_3$ (whose flows are rotations). 

We can now proceed with obtaining a lower bound for the right hand side of \eqref{J_eps 1st equ} 
by decomposing $W^{3,2}(S^2,TS^2)$ as $\oplus_{j=1}^\infty E_{\lambda_j}$, 
where $E_{\lambda_j}$ is the eigenspace of $(\lap \cdot)^T$ 
corresponding to the eigenvalue $\lambda_j = -j(j+1),~ j\in\N$. This decomposition enables us to express $\hat{\psi}$ in \eqref{J_eps 1st equ} as
\[ 
\hat{\psi} =\sum_{j=1}^\infty \hat{\psi}_{\lambda_j}, \quad \hat{\psi}_{\lambda_j}\in E_{\lambda_j}. 
\] 
As already remarked, the first eigenvalue is $\lambda_1=-2$ and 
the eigenvectorfield $\hat{\psi}_{\lambda_1}$ lies in the kernel of $J_\varepsilon$.
Therefore
\[ 
\hat{\psi}_1 =\sum_{j=2}^\infty \hat{\psi}_{\lambda_j}. 
\] 
Note that $\int_{S^2} \langle \hat{\psi}_{\lambda_i}, \hat{\psi}_{\lambda_j}\rangle=0$ if $i\neq j$. 
Then 
\begin{align}\label{eq:del1_psi} 
  2\int_{S^2}  \langle \hat{\psi}_1,(\lap\hat{\psi}_1)^T\rangle dA_{S^2} 
&= \sum_{j=2}^\infty\int_{S^2} 
  \left\langle \frac{2}{\lambda_j} (\lap\hat{\psi}_{\lambda_j})^T, (\lap \hat{\psi}_{\lambda_j})^T\right\rangle 
  dA_{S^2} \nonumber \\ 
&=\sum_{j=2}^\infty\frac{2}{\lambda_j}\int_{S^2} |(\lap\hat{\psi}_{\lambda_j})^T|^2 dA_{S^2}. 
\end{align}

Analogously we have 
\begin{align*}
  4\varepsilon\int_{S^2} 
  \langle \hat{\psi}_1, (\Delta \hat{\psi}_1)^T\rangle dA_{S^2}
&= -4\varepsilon\sum_{j=2}^{\infty} \int_{S^2} 
  \langle\nabla \hat{\psi}_{\lambda_j}, \nabla \hat{\psi}_{\lambda_j}\rangle 
  dA_{S^2} \\
&=  -4\varepsilon\sum_{j=2}^{\infty}\int_{S^2} \frac{1}{\lambda_j^2} 
  \langle\nabla(\lap \hat{\psi}_{\lambda_j})^T, \nabla(\lap \hat{\psi}_{\lambda_j})^T \rangle dA_{S^2} \\ 
&=  -4\varepsilon\sum_{j=2}^{\infty}\int_{S^2} \frac{1}{\lambda_j^2} 
  |\nabla(\lap \hat{\psi}_{\lambda_j})^T|^2 dA_{S^2} 
\end{align*} 
Inserting this in (\ref{J_eps 1st equ}) yields
\begin{align*}
  \int_{S^2} \left\langle J_\varepsilon \hat{\psi}, (\Delta \hat{\psi})^T\right\rangle  dA_{S^2} 
&= \varepsilon\int_{S^2}  |\nabla (\Delta \hat{\psi}_1)^T|^2  dA_{S^2} 
  + \int_{S^2} |(\Delta \hat{\psi}_1)^T|^2  dA_{S^2} \\ 
&\quad + \sum_{j=2}^\infty\frac{2}{\lambda_j}\int_{S^2} |(\lap\hat{\psi}_{\lambda_j})^T|^2 dA_{S^2} 
  -4\varepsilon\sum_{j=2}^{\infty}\int_{S^2} \frac{1}{\lambda_j^2} 
  |\nabla(\lap \hat{\psi}_{\lambda_j})^T|^2 dA_{S^2} \\ 
&= \sum_{j=2}^\infty\frac{\lambda_j + 2}{\lambda_j}\int_{S^2} |(\lap\hat{\psi}_{\lambda_j})^T|^2 dA_{S^2} 
  +\varepsilon\sum_{j=2}^{\infty}\int_{S^2} \frac{\lambda_j^2 - 4}{\lambda_j^2} 
  |\nabla(\lap \hat{\psi}_{\lambda_j})^T|^2 dA_{S^2}
\end{align*} 
Note that $\lambda_2=-6 > \lambda_3 > \lambda_4 \ \dots $. Therefore  
\begin{align*}
\frac{\lambda_j^2-4}{\lambda_j^2} \geqslant \frac89 \qquad\text{and}\qquad 
\frac{\lambda_j+2}{\lambda_j} \geqslant \frac23 \qquad \forall \, j \geqslant 2 
\end{align*} 
and so, 
\begin{align}\label{J_eps, Delta psi eqn 1}
  \int_{S^2} \left\langle J_\varepsilon \hat{\psi}, (\Delta \hat{\psi})^T\right\rangle dA_{S^2} 
&\geqslant \varepsilon \frac89 ||\nabla (\Delta\hat{\psi}_1)^T||_{L^2(S^2)}^2 
  + \frac23||(\Delta\hat{\psi_1})^T||_{L^2(S^2)}^2.
\end{align} 
We want to bound the full second and third derivative  of $\hat{\psi}_1$ in terms of 
$(\Delta \hat{\psi}_1)^T$ and $\nabla (\Delta \hat{\psi}_1)^T$. 
To do this we take a closer look at the normal part 
$(\Delta \hat{\psi_1})^N=(\Delta \hat{\psi}_1\cdot \x) \x$. 
Note that with $e_1,e_2$ the orthonormal basis for $T_\x S^2$ as before we have 
\begin{align}\label{normal part Delta}\nonumber
\Delta \hat{\psi}_1\cdot \x 
&= \sum_{i=1}^{2} e_i(e_i(\hat{\psi}_1))\cdot \x 
  = \sum_{i=1}^{2} e_i\left( e_i(\hat{\psi}_1)\cdot \x\right) - \left(e_i(\hat{\psi}_1)\cdot e_i\right)(\x)
  \\ \nonumber
&= \sum_{i=1}^{2} e_i\left(e_i\left(\hat{\psi}_1\cdot \x\right)\right) - 
  e_i\left(\hat{\psi}_1\cdot e_i\right)(\x) - \left(e_i(\hat{\psi}_1)\cdot e_i\right)(\x)
  \\
&= -2 \sum_{i=1}^{2} (e_i(\hat{\psi}_1)\cdot e_i)(\x)
  = -2 \operatorname{div}\hat{\psi}_1,
\end{align} 
where we used that $\hat{\psi}_1$ is tangential in the second line and 
$\hat{\psi}_1 \cdot e_i(e_i)=\hat{\psi} \cdot D_{e_i}e_i=0$ in the last line.
Then 
\begin{align*} 
(\Delta \hat{\psi}_1)^T = 
  \Delta \hat{\psi}_1 + 2\operatorname{div}\hat{\psi}_1\cdot \x 
\qquad\text{and}\qquad 
\varepsilon \nabla(\Delta \hat{\psi}_1)^T = 
  \varepsilon \nabla (\Delta \hat{\psi}_1) + 2\varepsilon \nabla (\operatorname{div}\hat{\psi}_1\cdot \x)
\end{align*}
and therefore 
\begin{align*} 
||\Delta \hat{\psi}_1||_{L^2(S^2)}
&\leqslant ||(\Delta \hat{\psi}_1)^T||_{L^2(S^2)} + c||\nabla \hat{\psi}_1||_{L^2(S^2)} 
\\ 
\sqrt{\varepsilon}||\nabla\Delta\hat{\psi}_1||_{L^2(S^2)}
&\leqslant \sqrt{\varepsilon}||\nabla(\Delta  \hat{\psi}_1)^T||_{L^2(S^2)} 
  + c\sqrt{\varepsilon}\left(||\nabla^2 \hat{\psi}_1||_{L^2(S^2)} + ||\nabla \hat{\psi}_1||_{L^2(S^2)}\right). 
\end{align*}
Integrating by parts we get
\[ 
\int_{S^2} |\nabla\hat{\psi}_1|^2dA_{S^2} = 
  -\int_{S^2} \hat{\psi}_1 \Delta \hat{\psi}_1 dA_{S^2} = 
  \sum_{j=2}^\infty\frac{1}{(-\lambda_j)}\int_{S^2} |(\lap\hat{\psi}_{\lambda_j})^T|^2 dA_{S^2} 
\leqslant \frac16 ||\Delta \hat{\psi}_1||_{L^2(S^2)}^2 
\] 
where we have used \eqref{eq:del1_psi} and $-\lambda_j \geqslant 6 \ \forall \, j \geqslant 2$. 
We also have 
\[ 
||\hat{\psi}_1||^2_{L^2(S^2)} = \sum_{j=2}^\infty\int_{S^2} 
  \frac{1}{\lambda_j^2} \left\langle (\lap\hat{\psi}_{\lambda_j})^T, (\lap \hat{\psi}_{\lambda_j})^T\right\rangle 
  dA_{S^2} \leqslant \frac{1}{36} ||(\lap \hat{\psi}_1)^T||^2_{L^2(S^2)}. 
\] 
Therefore, going back to (\ref{J_eps, Delta psi eqn 1}) we see that 
\begin{align*}
\sqrt{\varepsilon}||\nabla\Delta\hat{\psi}_1||_{L^2(S^2)} 
&+ ||\Delta \hat{\psi}_1||_{L^2(S^2)} + ||\nabla \hat{\psi}_1||_{L^2(S^2)} + ||\hat{\psi}_1||_{L^2(S^2)} \\ 
&\leqslant c\left(\int_{S^2} 
  \langle J_\varepsilon \hat{\psi}, (\Delta \hat{\psi}_1)^T\rangle dA_{S^2} \right)^\frac{1}{2} 
  + c\sqrt{\varepsilon}||\nabla^2 \hat{\psi}_1||_{L^2(S^2)} .
\end{align*}	
To get an estimate for the full second and third derivative we integrate by parts and 
exchange derivatives as in the proof of Lemma \ref{chiDeltaPsi small in L2 lemma}.
All in all we arrive at
\begin{align}\label{psi 1 J esp eqn}
\sqrt{\varepsilon} ||\nabla^3 \hat{\psi}_1||_{L^2(S^2)} + ||\hat{\psi}_1||_{W^{2,2}(S^2)} 
  \leqslant c \left(\int_{S^2} 
  \langle J_\varepsilon \hat{\psi}, (\Delta \hat{\psi})^T\rangle dA_{S^2}\right)^\frac{1}{2}. 
\end{align}
We have seen that the kernel of $J$, and therefore the kernel of $J_\varepsilon$, 
is finite (in fact $6$) dimensional and all norms on it are equivalent. We estimate
\begin{align*}
\sqrt{\varepsilon}||\nabla^3\hat{\psi}_{0}||_{L^2(S^2)} + ||\hat{\psi}_{0}||_{W^{2,2}(S^2)} 
  \leqslant  c ||\hat{\psi}_{0}||_{L^2(S^2)}.
\end{align*}
Together with (\ref{psi 1 J esp eqn})
\begin{align}\label{D3 nabla est 1}
\sqrt{\varepsilon}||\nabla^3 \hat{\psi}||_{L^2(S^2)} + ||\hat{\psi}||_{W^{2,2}(S^2)} 
  \leqslant \left(\int_{S^2} 
    \langle J_\varepsilon\hat{\psi},(\Delta \hat{\psi})^T\rangle dA_{S^2}\right)^\frac{1}{2} 
  + ||\hat{\psi}_0||_{L^2(S^2)}.
\end{align} 

At this point we use the fact that $M \in PSL(2,\C)$ has been selected 
so as to minimize $\| \nabla (u_M -\Id) \|_{L^2(S^2)}$. This implies that 
\begin{equation}\label{eq:Zorth} 
-\int_{S^2}\nabla v \cdot \nabla \xi \, dA_{S^2} 
+ \int_{S^2}\nabla Id \cdot \nabla \xi \, dA_{S^2} = 0 
\quad \forall \, \xi \in Z, 
\end{equation} 
where $Z$ is the tangent space of the M\"{o}bius group at the identity and 
$v$ is related to $\hat{\psi}$ by \eqref{eq:psipsih}. But $Z = \ker{J} = \ker{J_{\varepsilon}}$ and, 
as shown in \S6 of \cite{lammmalmic}, this makes it possible to use \eqref{eq:Zorth} 
to estimate $\hat{\psi}_0$ as follows
\begin{align*}
  ||\hat{\psi}_0||_{L^2(S^2)} \leqslant c||\hat{\psi}||_{L^\infty(S^2)}^2 
  \leqslant c ||\hat{ \psi}||_{L^\infty(S^2)} ||\hat{ \psi}||_{W^{2,2}(S^2)}.
\end{align*} 
Going back to (\ref{D3 nabla est 1}) and choosing $\delta+\varepsilon$ in Corollary \ref{lambda2-1 small cor}  small enough gives
\begin{align}\label{J eps abs below}
	\sqrt{\varepsilon}||\nabla^3 \hat{\psi}||_{L^2(S^2)}  + ||\hat{\psi}||_{W^{2,2}(S^2)}
	&\leqslant  \left(\int_{S^2}\langle J_\varepsilon\hat{\psi}, (\Delta \hat{\psi})^T\rangle dA_{S^2}\right)^\frac{1}{2} .
\end{align}
Now we estimate the right hand side further. By (\ref{EL tang})
\begin{align}\nonumber\label{psi-hat psi inegr}
	\int_{S^2}& \left\langle J_\varepsilon\hat{\psi},(\Delta \hat{\psi})^T\right\rangle dA_{S^2}
	\\\nonumber
	&=\int_{S^2} \bigg\langle-2\hat{\psi} \langle \nabla \hat{\psi}, \nabla\Id\rangle+ O(|\hat{\psi}||\nabla^2 \hat{\psi}|) + O(|\nabla \hat{\psi}|^2)+O(|\hat{\psi}||\nabla\hat{\psi}|) +O(|\hat{\psi}|^2)
	\\\nonumber
	&\qquad+ \varepsilon \bigg( (\chi_\lambda-1)(\Delta (\Delta\hat{\psi})^T)^T +\chi_\lambda (\Delta(\Delta  (\psi-\hat{\psi}))^T )^T +\chi_\lambda (\Delta(\Delta\psi)^N)^T \bigg)
	\\
	&\qquad -\varepsilon \bigg[ \Psi_1- \chi_\lambda\Delta^2 \psi +4\chi_\lambda\psi +\Psi_2+  \Psi_3\bigg]^T, (\Delta \hat{\psi})^T\bigg\rangle dA_{S^2}.
\end{align}
We estimate each part separately. For the first part we use  $W^{1,1}\hookrightarrow L^2(S^2)$, H\"older's inequality 
and Corollary \ref{lambda2-1 small cor} 
\begin{align*}
	&\int_{S^2} \left\langle 2\hat{\psi}\langle \nabla \hat{\psi}, \nabla \Id\rangle +O(|\hat{\psi}||\nabla^2 \hat{\psi}|) + O(|\nabla \hat{\psi}|^2)+O(|\hat{\psi}||\nabla\hat{\psi}|) +O(|\hat{\psi}|^2) , (\Delta \hat{\psi})^T\right\rangle dA_{S^2}
	\\
	&\leqslant \int_{S^2} |\nabla^2 \hat{\psi}|\bigg( |\hat{\psi}||\nabla^2 \hat{\psi}|+ |\nabla\hat{\psi}|^2 + |\hat{\psi}|^2 \bigg) dA_{S^2}
	\\
	&\leqslant (\eta +c(\delta +\varepsilon))\int_{S^2} |\nabla^2 \hat{\psi}|^2 dA_{S^2} + c_\eta\int_{S^2} \left(|\nabla\hat{\psi}|^4 + |\hat{\psi}|^4 \right) dA_{S^2}
	\\
	&\leqslant (\eta +c(\delta +\varepsilon))\int_{S^2} |\nabla^2 \hat{\psi}|^2 dA_{S^2}  + c_\eta \left( \int_{S^2} |\nabla^2 \hat{\psi}|^2 dA_{S^2}\right)\left( \int_{S^2} |\nabla \hat{\psi}|^2 dA_{S^2}\right) 
	\\
	&\quad +c_\eta\left(\int_{S^2} |\nabla \hat{\psi} |^2 dA_{S^2}\right)^2 + c_\eta \left(\int_{S^2} |\nabla \hat{\psi}|^2 dA_{S^2}\right)\left(\int_{S^2}|\hat{ \psi}|^2 dA_{S^2}\right) + c_\eta \left(\int_{S^2}|\hat{ \psi}|^2 dA_{S^2}\right)^2
	\\
	&\leqslant \left(\eta+c(\delta +\varepsilon)+c_\eta(\delta +\varepsilon)^2\right) || \hat{\psi}||_{W^{2,2}(S^2)}^2.
\end{align*}
Note that this is where the estimate fails if we include the term $\Id \langle \nabla \psi, \nabla \Id\rangle$ and this is the reason why we consider only tangential terms.

In the second part we use integration by parts, $W^{1,1}\hookrightarrow L^2(S^2)$, (\ref{Dchi abs}), (\ref{chi-1 abs}), (\ref{fehler eqn}) and Corollary \ref{lambda2-1 small cor}
\begin{align*}
	\varepsilon&\int_{S^2} \bigg\langle  (\chi_\lambda-1)(\Delta (\Delta\hat{\psi})^T)^T +\chi_\lambda (\Delta(\Delta  (\psi-\hat{\psi}))^T )^T +\chi_\lambda (\Delta(\Delta\psi)^N)^T, (\Delta \hat{\psi})^T\bigg\rangle dA_{S^2} 
	\\
	&=-\varepsilon\int_{S^2}  (\chi_\lambda-1) |\nabla(\Delta \hat{\psi})^T|^2 +\chi_\lambda \left\langle \nabla\left(\Delta(\psi-\hat{\psi})\right)^T,\nabla(\Delta\hat{\psi})^T\right\rangle dA_{S^2} 
	\\
	&\quad  - \varepsilon\int_{S^2} \chi_\lambda\left\langle\nabla(\psi |\nabla \psi|^2), \nabla (\Delta \hat{\psi})^T\right\rangle dA_{S^2} 
	\\
	&\quad - \varepsilon\int_{S^2} \nabla \chi_\lambda \left\langle \nabla (\Delta \hat{ \psi})^T, (\Delta \hat{ \psi})^T \right\rangle + \nabla \chi_\lambda\left\langle \nabla\left(\Delta(\psi-\hat{\psi})\right)^T,(\Delta\hat{\psi})^T\right\rangle dA_{S^2} 
	\\
	&\quad -\varepsilon \int_{S^2} \nabla \chi_\lambda \left\langle\nabla(\psi |\nabla \psi|^2), (\Delta \hat{\psi})^T\right\rangle dA_{S^2} 
	\\
	&\leqslant \varepsilon((\lambda^2-1) + \eta+c_\eta(\delta+\varepsilon))\int_{S^2}|\nabla^3 \hat{\psi}|^2  dA_{S^2} 
	\\
	&\quad + (c_\eta + (\lambda^2-1))\varepsilon \int_{S^2} |\nabla^2 \hat{\psi}|^4 +|\nabla \hat{\psi}|^4 + |\hat{\psi}|^4 dA_{S^2} 
	+ c\varepsilon(\lambda^2-1) \int_{S^2} |\nabla^2 \hat{\psi}|^2 dA_{S^2} 
	\\
	&\quad + (c_\eta + (\lambda^2-1))\varepsilon \int_{S^2} |\nabla \psi|^6 +|\nabla^2 \psi|^2 |\nabla\psi|^2 |\psi|^2dA_{S^2} 
	\\
	&\leqslant \varepsilon(\eta+c_\eta(\delta+\varepsilon))\int_{S^2}|\nabla^3 \hat{\psi}|^2 dA_{S^2}  + c_\eta \varepsilon(\delta +\varepsilon)|| \hat{\psi}||_{W^{2,2}(S^2)}^2.
\end{align*}
In the same way  we estimate
\begin{align*}
	\varepsilon \int_{S^2}&\left\langle  (\Psi_1- \chi_\lambda\Delta^2 \psi+4\chi_\lambda\psi)^T, (\Delta \hat{\psi})^T\right\rangle dA_{S^2} 
	\\
	&= \varepsilon \int_{S^2}\bigg\langle \chi_\lambda \bigg[ 4\langle \nabla \Delta \psi, \nabla \psi\rangle +2 |\nabla^2 \psi|^2 +|\Delta \psi|^2 +6\langle \nabla \Delta \psi, \nabla \Id\rangle
	\\
	&\qquad \qquad +4 \langle \nabla_i \nabla \psi , \nabla_i \nabla \Id \rangle -4 \langle \Delta \psi, \Delta \Id \rangle -8 \langle \nabla \psi, \nabla  \Id\rangle \bigg]^T, (\Delta \hat{\psi})^T\bigg\rangle   dA_{S^2} 
	\\
	&\leqslant  \varepsilon (\eta+c_\eta(\delta+\varepsilon))\int_{S^2} |\nabla^3 \hat{\psi}|^2 dA_{S^2} 
	+   c_\eta \varepsilon|| \hat{\psi}||^2_{W^{2,2}(S^2)}.
\end{align*}
For the third term we use (\ref{Dchi abs}),  $W^{1,1}\hookrightarrow L^2(S^2)$ and Corollary \ref{lambda2-1 small cor}
\begin{align*}
	\varepsilon \int_{S^2} \left\langle \Psi_2^T, (\Delta \hat{\psi})^T\right\rangle dA_{S^2} 
	&= \varepsilon \int_{S^2} \bigg\langle  \bigg[\nabla_i\chi_\lambda (\Id +\psi)\bigg( 4 \langle \nabla_i \nabla \psi, \nabla \psi \rangle + 4 \langle \nabla_i \nabla \psi ,\nabla\Id\rangle + 2 \langle \Delta \psi, \nabla_i \psi \rangle 
	\\
	&\qquad \qquad + 2 \langle \Delta \psi, \nabla_i \Id \rangle + 4 \langle \nabla \psi, \nabla_i\nabla \Id \rangle -4\langle \nabla_i \psi,  \Id \rangle \bigg)
	\\
	&\qquad \qquad+2 \nabla_i \chi_\lambda \left( \nabla_i \Delta \psi-2\nabla_i\Id\right) \bigg]^T, (\Delta \hat{\psi})^T\bigg\rangle dA_{S^2} 
	\\
	&\leqslant c\varepsilon \int_{S^2} |\nabla \chi_\lambda||\Delta \hat{\psi}| \left(|\nabla^3 \psi|+ |\nabla^2 \psi||\nabla \psi| + |\nabla^2 \psi| + |\nabla\psi|+ 1\right)dA_{S^2}
	\\
	&\leqslant c_\eta\varepsilon^2(\lambda^2-1)^2 \int_{S^2}\left(|\nabla^3 \psi|^2+|\nabla^2 \psi|^2 |\nabla\psi|^2 + |\nabla^2\psi|^2 + |\nabla \psi|^2 \right)  dA_{S^2} 
	\\
	&\quad + \eta\int_{S^2} |\nabla^2\hat{ \psi}|^2 dA_{S^2} + c\varepsilon(\lambda^2-1)\int_{S^2}|\nabla^2 \hat{\psi}|  dA_{S^2} 
	\\
	&\leqslant c_\eta (\delta +\varepsilon)\varepsilon(\lambda^2-1)^2 + 2\eta \int_{S^2} |\nabla^2 \hat{\psi}|^2  dA_{S^2} 
\end{align*}	
and with (\ref{Delta chi abs})
\begin{align*}
	\varepsilon \int_{S^2} &\left\langle  \Psi_3^T, (\Delta \hat{\psi})^T\right\rangle  dA_{S^2} 
	\\
	&= \varepsilon \int_{S^2} \bigg\langle \Delta \chi_\lambda \left( (\Id +\psi) (|\nabla \psi|^2 + 2 \langle \nabla \psi, \nabla \Id \rangle)+ \Delta \psi+2\psi\right)^T,(\Delta \hat{\psi})^T\bigg\rangle dA_{S^2} 
	\\
	&\leqslant c_\eta \varepsilon^2 (\lambda^2-1)^2 \int_{S^2} \left(|\nabla^2 \psi|^2 + |\nabla\psi|^4 + |\nabla \psi|^2 + |\psi|^2\right) dA_{S^2}
	\\
	&\quad   + \eta \int_{S^2}|\nabla^2 \hat{\psi}|^2  dA_{S^2} 
	\\
	&\leqslant c_\eta (\delta +\varepsilon)^2 \varepsilon^2 (\lambda^2-1)^2 + \eta \int_{S^2}|\nabla^2 \hat{\psi}|^2 dA_{S^2} . 
\end{align*}	
Thus we have in (\ref{psi-hat psi inegr})
\begin{align}\nonumber
	\int_{S^2} &\left\langle J_\varepsilon \hat{ \psi} , (\Delta \hat{\psi})^T\right\rangle dA_{S^2} 
	\leqslant c(\eta+\varepsilon+\delta)\left( \varepsilon ||\nabla^3 \hat{\psi}||_{L^2(S^2)}^2+|| \hat{\psi}||^2_{W^{2,2}(S^2)}\right)
	+ c(\delta +\varepsilon)\varepsilon(\lambda^2-1)^2.
\end{align}	
We apply this to (\ref{J eps abs below}) and choose   $\eta,\varepsilon,\delta>0$ small enough so that we can absorb the higher order terms to the left-hand side.
\begin{align}\label{hat psi w22 est}
	\sqrt{\varepsilon}||\nabla^3 \hat{\psi}||_{L^2(S^2)} +  ||\hat{\psi}||_{W^{2,2}(S^2)} &\leqslant \left( \int_{S^2} \langle J_\varepsilon \hat{\psi}, (\Delta \hat{\psi})^T\rangle  dA_{S^2} \right)^\frac{1}{2}
	\leqslant c(\delta+\varepsilon)^\frac{1}{2}\sqrt{\varepsilon} (\lambda^2-1). 
\end{align}
To get the same bound for $\psi$ we estimate with (\ref{fehler eqn}), $W^{1,1}\hookrightarrow L^2(S^2)$ and Corollary \ref{lambda2-1 small cor}
\begin{align*}
	\varepsilon\int_{S^2} |\nabla^3 \psi|^2 dA_{S^2}&\leqslant \varepsilon\int_{S^2} |\nabla^3 \hat{\psi}|^2 dA_{S^2}+ c\varepsilon \int_{S^2} \left(|\psi|^2 |\nabla^3\psi|^2 + |\nabla^2\psi|^4 + |\nabla\psi|^4+ |\psi|^4\right) dA_{S^2}
	\\
	&\leqslant \varepsilon\int_{S^2} |\nabla^3 \hat{\psi}|^2 dA_{S^2}+c\varepsilon(\delta +\varepsilon)^2 \int_{S^2}\left( |\nabla^3 \psi|^2 + |\nabla^2 \psi|^2 + |\nabla \psi|^2 + |\psi|^2\right) dA_{S^2}
\end{align*}
Analogously we get 
\begin{align*}
	\int_{S^2} |\nabla^2 \psi|^2 dA_{S^2}&\leqslant \int_{S^2} |\nabla^2 \hat{\psi}|^2 dA_{S^2}+ c\int_{S^2} \left(|\psi|^2 |\nabla^2 \psi|^2 + |\nabla \psi|^4 + |\psi|^4 \right)dA_{S^2}
	\\
	&\leqslant \int_{S^2} |\nabla^2 \hat{\psi}|^2 dA_{S^2}+ c(\delta+\varepsilon)^2\int_{S^2} \left( |\nabla^2 \psi|^2 + |\nabla \psi|^2 + |\psi|^2 \right)dA_{S^2}
\end{align*}
and 
\begin{align*}
	\int_{S^2} |\nabla \psi|dA_{S^2} &\leqslant \int_{S^2} |\nabla \hat{\psi}|^2 dA_{S^2}+ c(\delta+\varepsilon)^2 \int_{S^2} \left(|\nabla\psi|^2 + |\psi|^2\right) dA_{S^2},
	\\
	\int_{S^2} |\psi|^2dA_{S^2} &\leqslant \int_{S^2} |\hat{\psi}|^2 dA_{S^2}+ c(\delta +\varepsilon)^2\int_{S^2}|\psi|^2 dA_{S^2}.
\end{align*}
Putting everything together and absorbing terms yields 
\begin{align}
	\label{psi w22 est}
	\sqrt{\varepsilon}||\nabla^3 \psi||_{L^2(S^2)} +  ||\psi||_{W^{2,2}(S^2)} &\leqslant \left( \int_{S^2} \langle J_\varepsilon \hat{\psi}, (\Delta \hat{\psi})^T\rangle  dA_{S^2} \right)^\frac{1}{2}
	\leqslant c(\delta+\varepsilon)^\frac{1}{2}\sqrt{\varepsilon} (\lambda^2-1). 
\end{align}
With this we show the following

\begin{thm}\label{main thm 1}
	There exist $\delta>0$ and $\bar{\varepsilon}>0$ small such that the only critical points $u_\varepsilon$ of $E_\varepsilon$ of degree $\pm 1$ with $E_\varepsilon(u_\varepsilon)\leqslant 4\pi(1+2\varepsilon) +\delta$ and $\varepsilon\leqslant \bar{\varepsilon}$ are maps of the form $u^R(x)= Rx,~ R\in O(3)$.
\end{thm}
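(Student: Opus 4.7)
The plan is to reduce first to degree $+1$: if $\deg u_\varepsilon = -1$, precompose $u_\varepsilon$ with a fixed orientation-reversing isometry $S \in O(3) \setminus SO(3)$ of $S^2$; the map $u_\varepsilon \circ S$ is again a critical point of $E_\varepsilon$, has the same energy, and has degree $+1$. Once we show $u_\varepsilon \circ S = R' x$ for some $R' \in SO(3)$, we recover $u_\varepsilon = R' S^{-1} x \in O(3)$. So assume $\deg u_\varepsilon = +1$. Using the thresholds in Proposition \ref{p:clomob3}, Lemma \ref{chiDeltaPsi small in L2 lemma}, Corollary \ref{lambda2-1 small cor}, and \eqref{psi w22 est}, I choose $\bar\varepsilon$ and $\delta$ small enough to ensure that a (slightly abused) common closeness parameter can be made arbitrarily small. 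Then I pick the optimal $M^* \in PSL(2,\mathbb{C})$ minimizing $\|\nabla(u_\varepsilon \circ M - \Id)\|_{L^2(S^2)}$ as in Section \ref{sec eps W32 closeness}, set $v := u_\varepsilon \circ M^*$, $\psi := v - \Id$, and let $\lambda \geqslant 1$ be the larger eigenvalue of $M^* (M^*)^*$.

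With this choice, \eqref{psi w22 est} gives
\begin{align*}
\|\psi\|_{W^{2,2}(S^2)} + \sqrt{\varepsilon}\|\nabla^3 \psi\|_{L^2(S^2)} \leqslant c(\delta+\varepsilon)^{1/2}\sqrt{\varepsilon}(\lambda^2 - 1).
\end{align*}
The next step is to feed this into the eigenvalue identity \eqref{eq:lsqbd}. By \eqref{eq:Deltav_ubd} together with $\chi_\lambda \leqslant 2$ (from Corollary \ref{lambda2-1 small cor}), the factor $\sqrt{\varepsilon}(\|\sqrt{\chi_\lambda}\Delta v\|_{L^2(S^2)} + \|\sqrt{\chi_\lambda}\Delta \Id\|_{L^2(S^2)})$ is bounded by an absolute constant $C_1$. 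The factor $\sqrt{\varepsilon}\|\sqrt{\chi_\lambda}\Delta \psi\|_{L^2(S^2)}$ is controlled, via the displayed inequality above and $\chi_\lambda \leqslant 2$, by $c\varepsilon(\delta+\varepsilon)^{1/2}(\lambda^2-1)$. Hence \eqref{eq:lsqbd} reduces to
\begin{align*}
C \varepsilon (\lambda^2 - 1) \leqslant c C_1 \varepsilon(\delta+\varepsilon)^{1/2}(\lambda^2 - 1).
\end{align*}
Shrinking $\delta + \bar\varepsilon$ so that $cC_1(\delta + \bar\varepsilon)^{1/2} < C$ forces $\lambda = 1$.

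Once $\lambda = 1$, the right-hand side of the $W^{2,2}$-estimate vanishes, so $\psi \equiv 0$, hence $v = \Id$ and $u_\varepsilon = (M^*)^{-1}$. Moreover, $\lambda = 1$ means $M^* \in SU(2)$ (up to sign), which by the identification at the end of Section \ref{sec Moebius} corresponds to an element of $SO(3)$. Together with the reduction above, this yields $u_\varepsilon = u^R$ for some $R \in O(3)$.

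The main obstacle has already been overcome in \eqref{psi w22 est}: the extra $(\delta+\varepsilon)^{1/2}$ gain over the naive bound $\sqrt\varepsilon(\lambda^2-1)$, which is what makes the feedback inequality strict, rests on the quadratic estimate $\|\hat\psi_0\|_{L^2(S^2)} \leqslant c\|\hat\psi\|_{L^\infty(S^2)}\|\hat\psi\|_{W^{2,2}(S^2)}$ for the component of $\hat\psi$ in $Z = \ker J_\varepsilon$, which in turn hinges on the optimality \eqref{eq:Zorth} of $M^*$. Without that gain, the feedback would collapse to $C\varepsilon(\lambda^2-1) \leqslant c\varepsilon(\lambda^2-1)$ and yield no information; with it, the remaining work is the bookkeeping indicated above of the smallness parameters.
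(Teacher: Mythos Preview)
Your proof is correct and follows essentially the same route as the paper's: reduce to degree $+1$, pick the optimal M\"obius transformation, feed the refined estimate \eqref{psi w22 est} back into \eqref{eq:lsqbd}, and conclude $\lambda=1$, hence $\psi\equiv 0$. The only cosmetic difference is that the paper also extracts the $\sqrt{\varepsilon}$ from the second factor in \eqref{eq:lsqbd} (obtaining $C\varepsilon(\lambda^2-1)\leqslant c\varepsilon^{3/2}(\delta+\varepsilon)^{1/2}(\lambda^2-1)$ and closing by taking $\varepsilon$ small), whereas you bound that factor by a constant and close using the $(\delta+\varepsilon)^{1/2}$ alone; both are valid since $\delta$ and $\bar\varepsilon$ are at our disposal.
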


\begin{proof}
	Since a map of degree $-1$ only differs from a map of degree $1$ by a reflection, we can  assume without loss of generality that $u_\varepsilon$ is a critical point of degree one. Let $M$ be the M\"obius transformation that minimizes $||(u_\varepsilon)_M -\Id||_{L^2(S^2)}$ and let $v=(u_\varepsilon)_M$.  We use (\ref{psi w22 est}) to estimate (\ref{eq:lsqbd}) further 
	\begin{align*}
		C\varepsilon (\lambda^2 -1) &\leqslant \frac{ d}{d\log \lambda}E_{\varepsilon, \lambda}( \Id)- \frac{d}{d\log \lambda}E_{\varepsilon, \lambda}(v)
		\\
		&\leqslant 2\sqrt{\varepsilon}||\sqrt{\chi_\lambda}\Delta (v-\Id)||_{L^2(S^2)} \sqrt{\varepsilon}\left( ||\sqrt{\chi_\lambda}\Delta v||_{L^2(S^2)} + ||\sqrt{\chi_\lambda} \Delta \Id||_{L^2(S^2)}\right)
		\\
		&\leqslant c\varepsilon^\frac{3}{2} (\delta +\varepsilon)^\frac{1}{2}(\lambda^2-1).
	\end{align*}
	Choosing $\varepsilon>0$ small enough yields $\lambda=1$. By (\ref{psi w22 est}) $\psi$ must vanish and therefore $v= \Id$ and the M\"obius transformation $M$ must be a rotation. Hence $u$ is a rotation. 
	
\end{proof}

Now we prove the Main Theorem \ref{thm deg one rot}	

\begin{proof}[Proof of Theorem \ref{thm deg one rot}]
	If the statement is not true, there exist sequences $\varepsilon_k\searrow 0$ and critical points $u_{\varepsilon_k}$ with $E_{\varepsilon_k}(u_{\varepsilon_k})\leqslant 12\pi-\mu$ and $\operatorname{deg}(u_{\varepsilon_k})=1$ but $u_{\varepsilon_k}$ is not of the form $u(x)=Rx,~R\in  SO(3)$. With Theorem 1.1 in \cite{lamm06} and Theorem 2 in \cite{duzkuw} we get (up to the choice of a subsequence)
	\begin{align*}
		12\pi > \lim_{k\rightarrow \infty} E_{\varepsilon_k}(u_{\varepsilon_k})= \sum_{k=1}^{m}E(\omega^i)= 4\pi \sum_{i=1}^{m}|\deg (\omega^i)| \qquad\text{and}\qquad \deg(u_{\varepsilon_k})= \sum_{i=1}^{m}\deg(\omega^i) =1,
	\end{align*}
	with $\omega^i$ non-trivial harmonic maps. Therefore  $m=1$ and $\deg (\omega^1)=1$. Since $\omega^1$ is harmonic, $E(\omega^1)=4\pi$. Thus for every $\delta>0$ there exists a $k$ large enough so that 
	\begin{align*}
		E_{\varepsilon_k}(u_{\varepsilon_k})\leqslant 4\pi +\delta \leqslant 4\pi(1+2\varepsilon)+\delta
	\end{align*} 
	and Theorem \ref{main thm 1} implies that $u_{\varepsilon_k}$ is a rotation which, is a contradiction to our assumption.
	The proof follows analogously for maps of degree $-1$.
	
\end{proof}


\section{Gap Theorem for $\varepsilon$-harmonic maps of degree zero}

Now we turn our attention to $\varepsilon$-harmonic maps of degree zero. Theorem \ref{thm deg zero constant} follows analogously to  \cite{lammmalmic19}.  Before we get to the proof, we need a $\varepsilon$-version of the $\alpha$-harmonic  gap theorem of Sacks and Uhlenbeck (\cite{SacksUhlenbeck} Theorem 3.3).  

\begin{lemma}\label{Uhlenbeck gap lemma}
	There exists $\delta,\varepsilon>0$ such that if $u_\varepsilon\in W^{2,2}(S^2,S^2)$ is a critical point of $E_\varepsilon$  and $E_\varepsilon(u_\varepsilon)<\delta $, then $E_\varepsilon(u_\varepsilon)=0$ and $u_\varepsilon$ is a constant map.
\end{lemma}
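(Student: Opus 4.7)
The plan is to adapt the classical Sacks--Uhlenbeck gap argument (\cite{SacksUhlenbeck}, Theorem~3.3) to the fourth-order setting of the $\varepsilon$-energy, by testing the Euler--Lagrange equation against $u_\varepsilon - \bar u_\varepsilon$, where $\bar u_\varepsilon = \mint_{S^2}u_\varepsilon \, dA_{S^2}$.

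First I would invoke the $\varepsilon$-regularity theory for $\varepsilon$-harmonic maps from \cite{lamm06, Lamm10}: there exist $\delta_0, C>0$, uniform in $\varepsilon\in(0,1]$, such that if $E_\varepsilon(u_\varepsilon) < \delta_0$, then $u_\varepsilon$ is smooth and satisfies pointwise bounds of the form
\[
\|u_\varepsilon - \bar u_\varepsilon\|_{L^\infty(S^2)}^2 + \varepsilon\|\nabla u_\varepsilon\|_{L^\infty(S^2)}^2 \leqslant C\, E_\varepsilon(u_\varepsilon).
\]
Since $|u_\varepsilon|\equiv 1$, the Poincar\'e inequality also yields $1 - |\bar u_\varepsilon|^2 = \mint_{S^2}|u_\varepsilon - \bar u_\varepsilon|^2 \, dA_{S^2}\leqslant C E_\varepsilon(u_\varepsilon)$, and the algebraic identity $1 - u_\varepsilon\cdot\bar u_\varepsilon = \tfrac12(|u_\varepsilon - \bar u_\varepsilon|^2 + 1 - |\bar u_\varepsilon|^2)$ then gives $\|1 - u_\varepsilon\cdot\bar u_\varepsilon\|_{L^\infty(S^2)} \leqslant C\,E_\varepsilon(u_\varepsilon)$.

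Next I would pair the Euler--Lagrange equation with $u_\varepsilon - \bar u_\varepsilon$ and integrate over $S^2$. Because $\bar u_\varepsilon$ is constant, two integrations by parts on the left collapse the left-hand side exactly to $\int_{S^2}(|\nabla u_\varepsilon|^2 + \varepsilon |\Delta u_\varepsilon|^2)\,dA_{S^2} = 2E_\varepsilon(u_\varepsilon)$. On the right, since $-\Delta u_\varepsilon + \varepsilon\Delta^2 u_\varepsilon$ is parallel to $u_\varepsilon$, every term picks up the scalar factor $(1 - u_\varepsilon\cdot \bar u_\varepsilon)$. The leading term $\int (1-u_\varepsilon\cdot\bar u_\varepsilon)|\nabla u_\varepsilon|^2 \, dA_{S^2}$ is immediately $O\big(E_\varepsilon(u_\varepsilon)^2\big)$ via the $L^\infty$-bound above. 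For each of the three $\varepsilon$-order terms, involving $\Delta|\nabla u_\varepsilon|^2$, $\operatorname{div}\langle\Delta u_\varepsilon,\nabla u_\varepsilon\rangle$ and $\langle\nabla\Delta u_\varepsilon,\nabla u_\varepsilon\rangle$, I would integrate by parts to move derivatives onto the smooth factor $1-u_\varepsilon\cdot\bar u_\varepsilon$, whose gradient is $-\bar u_\varepsilon \cdot \nabla u_\varepsilon$ and whose Laplacian is $-\bar u_\varepsilon\cdot\Delta u_\varepsilon$. This reduces each $\varepsilon$-term to a product of factors of $\nabla u_\varepsilon$, $\Delta u_\varepsilon$ and $\nabla\Delta u_\varepsilon$ which, by Cauchy--Schwarz, the $L^\infty$ bound on $\nabla u_\varepsilon$, and the energy estimate $\varepsilon\int|\Delta u_\varepsilon|^2 \leqslant 2E_\varepsilon(u_\varepsilon)$, is controlled by $CE_\varepsilon(u_\varepsilon)^{3/2}$.

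Combining everything gives an inequality of the form
\[
E_\varepsilon(u_\varepsilon) \leqslant C\, E_\varepsilon(u_\varepsilon)^{1+\gamma}
\]
for some $\gamma>0$; choosing $\delta>0$ (and correspondingly $\varepsilon_0$) small enough this forces $E_\varepsilon(u_\varepsilon)=0$, so that $u_\varepsilon$ is constant.

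The main obstacle is the $\varepsilon$-regularity step itself. The fourth-order nature of the equation means that the pointwise control on $\nabla u_\varepsilon$ with a constant uniform in $\varepsilon\downarrow 0$ is not immediate: it requires handling the $\varepsilon\Delta^2 u_\varepsilon$ term through a careful bootstrap of precisely the kind developed in \cite{lamm06} (and refined in \cite{Lamm10}). Once that ingredient is in place, the remainder of the argument is a structural application of the Euler--Lagrange equation to the test function $u_\varepsilon - \bar u_\varepsilon$.
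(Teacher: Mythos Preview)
Your argument is correct, but the paper takes a different route. Rather than importing pointwise $L^\infty$-bounds from the $\varepsilon$-regularity theory of \cite{lamm06, Lamm10} and testing against $u_\varepsilon - \bar u_\varepsilon$, the paper tests the Euler--Lagrange equation against $\Delta u_\varepsilon$ and works entirely with integral estimates. After Young's inequality the right-hand side reduces to $c_\eta\int |\nabla u_\varepsilon|^4 + c_\eta\varepsilon^2 \int |\nabla^2 u_\varepsilon|^4$ plus absorbable terms; the quartic gradient integral is controlled via the critical Sobolev embedding $W^{1,1}(S^2)\hookrightarrow L^2(S^2)$ combined with the Bochner formula, giving $\int |\nabla u_\varepsilon|^4 \leqslant c\delta\int |\nabla^2 u_\varepsilon|^2$, and the $\varepsilon^2|\nabla^2 u_\varepsilon|^4$ term is handled similarly at the next order using $\varepsilon\int|\nabla^3 u_\varepsilon|^2$. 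For small $\delta$ everything absorbs to the left and forces $E_\varepsilon(u_\varepsilon)=0$.

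Your approach is conceptually cleaner once the $L^\infty$-bounds are in hand: the test function $u_\varepsilon - \bar u_\varepsilon$ is the natural Sacks--Uhlenbeck choice, the left-hand side collapses exactly to $2E_\varepsilon(u_\varepsilon)$, and the bookkeeping on the right is short. The trade-off is that you outsource the genuinely hard analytic step to the black-box $\varepsilon$-regularity, which you rightly flag as the main obstacle. The paper's argument, by contrast, is essentially self-contained---it needs only Bochner and the two-dimensional Sobolev embedding, never any pointwise control---which avoids a slightly circular feel, since the $\varepsilon$-regularity in \cite{lamm06} is itself proved by a small-energy bootstrap of a similar flavour.
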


\begin{proof}
	Let $u_\varepsilon$ be a critical point of $E_\varepsilon$. We multiply the Euler Lagrange equation with $\Delta u_\varepsilon$ and integrate by parts. 
	\begin{align}\nonumber\label{gap uhlenbeck eqn 1}
		\int_{S^2} |\Delta u_\varepsilon|^2 &+\varepsilon |\nabla \Delta u_\varepsilon|^2  dA_{S^2}= \int_{S^2} \langle \Delta u_\varepsilon- \varepsilon \Delta^2 u_\varepsilon, \Delta u_\varepsilon\rangle dA_{S^2}
		\\\nonumber
		&= \int_{S^2} \left\langle -u_\varepsilon|\nabla u_\varepsilon|^2+ \varepsilon u_\varepsilon\bigg( 
		\Delta|\nabla u_\varepsilon|^2 + \Div \langle \Delta u_\varepsilon, \nabla u_\varepsilon\rangle + \langle \nabla \Delta u_\varepsilon, \nabla u_\varepsilon\rangle,\Delta u_\varepsilon\right\rangle dA_{S^2}
		\\\nonumber
		&\leqslant \eta \int_{S^2} |\Delta u_\varepsilon|^2 dA_{S^2} + \varepsilon \eta \int_{S^2} |\nabla \Delta u_\varepsilon|^2dA_{S^2} +c_\eta \int_{S^2} |\nabla u_\varepsilon|^4dA_{S^2} 
		\\
		&\quad + c_\eta \varepsilon^2 \int_{S^2} |\nabla^2 u_\varepsilon|^4 dA_{S^2}.
	\end{align}
	Now we integrate the  Bochner formula (see e.g. Struwe \cite{StruweParkCity})
\begin{align}\nonumber\label{Bochner fomula}
	\Delta_{S^2} \left( \frac{1}{2} |\nabla_{S^2} u|^2\right)&= \p_\mu ( \Delta_{S^2} u^i)\p_\mu u^i + \p^2_{\alpha\mu} u^i \p^2_{\alpha \mu} u^i
	+R_{\alpha\beta} \p_\alpha u^i \p_\beta u^i 
	\\
	&\quad + \frac{1}{2} \p^2_{kl}h_{ij}(u) \p_\mu u^i \p_\mu u^j \p_\alpha u^k \p_{\alpha} u^l.
	\end{align}
	Here we work in local coordinates and $R_{\alpha\beta }$ denotes the Ricci tensor on $S^2$.

 The left-hand side then vanishes, the term involving the Ricci curvature is positive on the sphere and the second derivative of the metric is bounded.  After integrating the first term on the right-hand side by parts we get
	\begin{align*}
		\int_{S^2} |\nabla^2 u_\varepsilon|^2 + |\nabla u_\varepsilon|^2 dA_{S^2} &\leqslant c\int_{S^2} |\Delta u_\varepsilon|^2 + |\nabla u_\varepsilon|^4 dA_{S^2}.
	\end{align*}
	By the Sobolev embedding $W^{1,1}\hookrightarrow L^2(S^2) $ and $E_\varepsilon (u_\varepsilon)\leqslant \delta $ we have 
	\begin{align*}
		\int_{S^2} |\nabla u_\varepsilon|^4 dA_{S^2}&\leqslant c\left(\int_{S^2} |\nabla u_\varepsilon|^2 dA_{S^2}\right)\left(\int_{S^2}|\nabla^2 u_\varepsilon|^2 dA_{S^2}\right) 
		+ c\left(\int_{S^2} |\nabla u_\varepsilon|^2  dA_{S^2}\right)^2
		\\
		&\leqslant c\delta \int_{S^2}|\nabla^2 u_\varepsilon|^2 dA_{S^2} + c\delta \int_{S^2} |\nabla u_\varepsilon|^2  dA_{S^2}
	\end{align*}
	and with $\delta>0 $ small enough
	\begin{align*}
		\int_{S^2} |\nabla^2 u_\varepsilon|^2 + |\nabla u_\varepsilon|^2 dA_{S^2} &\leqslant c\int_{S^2} |\Delta u_\varepsilon|^2dA_{S^2} . 
	\end{align*}	
	Applying all of this to (\ref{gap uhlenbeck eqn 1}) and choosing $\eta$ small enough yields
	\begin{align*}
		\int_{S^2}\varepsilon |\nabla \Delta u_\varepsilon|^2+ |\nabla^2 u_\varepsilon|^2 + |\nabla u_\varepsilon|^2dA_{S^2}&\leqslant c\varepsilon^2\int_{S^2}  |\nabla^2 u_\varepsilon |^4 dA_{S^2}.
	\end{align*}
	Arguing as in the proof of Lemma \ref{chiDeltaPsi small in L2 lemma} we obtain for $\delta, \varepsilon,\eta>0$ small enough 
	\begin{align*}
		\int_{S^2}\left(|\nabla u_\varepsilon|^2 + |\nabla^2 u_\varepsilon|^2 + \varepsilon |\nabla^3 u_\varepsilon|^2 \right)dA_{S^2} &\leqslant c\varepsilon^2\int_{S^2}  |\nabla^2 u_\varepsilon |^4 dA_{S^2}
		\\
		&\leqslant \delta \varepsilon \int_{S^2} \left(|\nabla^3 u_\varepsilon|^2 +|\nabla^2 u_\varepsilon|^2\right)dA_{S^2}.
	\end{align*}
	Hence $E_\varepsilon(u_\varepsilon)=0$ and $u_\varepsilon$ is constant.
	
\end{proof}

\begin{proof}[Proof of Theorem \ref{thm deg zero constant}]
	We assume there exists a sequence $(u_{\varepsilon_k})_{k\in\N}$ of non-constant critical points of $E_{\varepsilon_k}$ with $E_{\varepsilon_k}(u_{\varepsilon_k})\leqslant 8\pi -\delta$. The energy identity for $\epsilon$-harmonic maps  yields
	\begin{align*}
		8\pi> \lim_{k\rightarrow \infty}E_{\varepsilon_k}(u_{\varepsilon_k})= \sum_{i=1}^{N}E(u^i)= \sum_{i=1}^{N} 4\pi |\deg(u^i)|,
	\end{align*}
	where $u^i:S^2\rightarrow S^2,~i=1,...,N$ are  harmonic maps, which are non-trivial for $i\geqslant 2$. With the results of Duzaar and Kuwert \cite{duzkuw} we have
	\begin{align*}
		0=\deg(u_{\varepsilon_k})= \sum_{i=1}^{N}\deg(u^i).
	\end{align*}
	Thus $N=1$ and $u^1$ is a constant harmonic map. Then $\lim_{k\rightarrow \infty}E_{\varepsilon_k}(u_{\varepsilon_k})=0$ and with Lemma \ref{Uhlenbeck gap lemma} it follows that $u_{\varepsilon_k}=const$.
	

\end{proof}
\bigskip
\bigskip
Next we construct explicit examples of $\varepsilon$-harmonic maps of degree zero with $E_\varepsilon(u_\varepsilon)\geqslant 8\pi$ which are not constant.  This shows that the bound in Theorem \ref{thm deg zero constant} is optimal. 
We follow \cite{lammmalmic19} and start by defining a class of rotationally symmetric maps. Let $n\in\N$ and 
\begin{align*}
	[n\pi, (n+1)\pi]\times [0,2\pi] &\rightarrow \R^3\\
	(r,\theta) \qquad&\mapsto (\sin r \, \cos \theta, \ \sin r \, \sin \theta, \ \cos r)
\end{align*}
be a parametrization of $S^2$. For $n$ even, this parametrization is orientation preserving and for $n$ odd orientation reversing. 
Further let $f\in C([0,\pi],\R)$ with 
\begin{align*}
	f(0)=0\qquad \text{and}\qquad f(\pi)=n\pi.
\end{align*}
Then we define $u_f\colon S^2\rightarrow S^2$ by 
\begin{align*}
	u_f\colon [0,\pi]\times [0,2\pi]&\rightarrow \R^3
	\\
	(r,\theta)\qquad&\mapsto \left( \sin(f(r)) \cos(\theta), \sin(f(r)) \sin(\theta) , \cos(f(r)) \right).
\end{align*}	
$u_f$ is rotationally symmetric and wraps $n$ times around $S^2$, reversing orientation after each round. Hence $u_f$ has degree zero if $n$ is even and degree one if $n$ is odd.
\\
\\
Let  $n=2$,
\begin{align*}
	X=\{f\colon [0,\pi] \rightarrow \R : u_f\in W^{2,2}(S^2,\R^3),~ f(0)=0, ~f(\pi)=2\pi \}
\end{align*}
and $M^*=\inf_{f\in X} I(f)$, where 
\begin{align*}
	I(f):= E_\varepsilon(u_f).
\end{align*}
$E_\varepsilon(u_f)$  is invariant under rotations about the $z$-axis and reflections in planes containing the line $(0,0,z)$.  Thus, by the principle of symmetric criticality of Palais (see \cite{Palais79} or Remark 11.4(a) in \cite{amma}),  $f$ is a critical point of $I$ if and only if $u_f$ is a critical point of $E_\varepsilon$. 
We show that there exists $f^*\in X$ with $I(f^*)=M^*$. Let $(f_j)$ be a sequence in $X$ with corresponding sequence $(u_{f_j})\in W^{2,2}(S^2, \R^3)$ and $I(f_j)\searrow M^*$. $(u_{f_j})$ is bounded in $W^{2,2}(S^2,\R^3)$ and thus contains a subsequence (again denoted $u_{f_j}$) with $u_{f_j}\rightharpoonup u_{f^*}$ weakly in $W^{2,2}(S^2,\R^3)$ and uniformly in $C^0(S^2,\R^3)$, with $f^*\in X$. By the lower semi-continuity of $E_\varepsilon$ with respect to weak convergence in $W^{2,2}(S^2,\R^3)$ we have $I(f^*)=E_\varepsilon(u_{f^*})=M^*$.

Now we want to express $E_\varepsilon(u_f)$ in terms of $f$ and compute
\begin{align*}
	\frac{\p  u_f}{\p r}&= f'(r)\bigg( \cos(f(r)) \cos(\theta), \cos (f(r))\sin(\theta), -\sin (f(r)) \bigg)
	\\
	\frac{\p u_f}{\p \theta}&=\bigg(-\sin(f(r)) \sin(\theta), \sin(f(r)) \cos(\theta), 0\bigg).
\end{align*}
Thus we have 
\[ 
\frac{1}{2} |\nabla u_f|^2 = \frac12 \left((f')^2 + \frac{(\sin (f(r)))^2}{(\sin (r))^2}\right). 
\] 
Noting again that 
\[
|\Delta u_f|^2 =|(\Delta u_f)^T|^2+|\nabla u_f|^4 \geqslant |\nabla u_f|^4,
\]
we get a lower bound on $E_\varepsilon(u_{f^*})$
\begin{align*} 
	E_\varepsilon(u_{f^*}) &\geqslant  \pi \int_0^{\pi} \left((f^* \mbox{}')^2 + \frac{(\sin f^*)^2}{(\sin r)^2}\right)\sin r \, dr + \frac{\varepsilon }{2}\int_{S^2} |\nabla u_{f^*}|^4dA_{S^2}
	\\
	&\geqslant 2\pi \int_0^\pi |f^*\mbox{} ' (\sin f^*)| dr + \frac{\varepsilon}{2}\left( 2\pi \int_0^\pi |f^*\mbox{} ' (\sin f^*)| dr \right)^2.
\end{align*} 
There exist $r_1 \in (0, \pi)$ such that $f^*(r_1) = \pi$ and
\begin{align*} 
	\int_0^{\pi} |f^*\mbox{}'(\sin f^*)| \, dr &\geqslant \int_0^{r_1} f^*\mbox{}'(\sin f^*) \, dr - 
	\int_{r_1}^{\pi} f^*\mbox{}' (\sin f^*) \, dr  
	\\ 
	&= \left.-\cos f^*(r)\right|_0^{r_1} + \left.\cos f^*(r)\right|_{r_1}^{\pi} 
	\\ 
	&= 4. 
\end{align*} 
Hence
\begin{align*}
	E_\varepsilon(u_{f^*})\geqslant 8\pi+ 32\varepsilon \pi^2
\end{align*}
and $u_{f^*}$ is a non-constant $\varepsilon$-harmonic map of degree zero.
To complete the proof of Theorem \ref{thm deg zero} we show the following 

\begin{prop}\label{prop deg zero}
	There exists a universal constant $c>0$, such that for any $0<\varepsilon<\frac{1}{4}$
	\begin{align}\label{eqn prop deg zero}
		E_\varepsilon(u_\varepsilon)< 8\pi + c\varepsilon^\frac{1}{2},
	\end{align}
	where  $u_\varepsilon$ is the minimizer of $E_\varepsilon$ among all $u_f$.
\end{prop}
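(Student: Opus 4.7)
The plan is to construct, for each sufficiently large $\lambda>0$, an explicit competitor $f_\lambda\in X$ with $\varepsilon$-energy bounded by $8\pi+C\lambda^{-2}+C\varepsilon\lambda^2$, and then to optimise $\lambda$. Since $u_\varepsilon$ minimises $E_\varepsilon$ over rotationally symmetric maps, $E_\varepsilon(u_\varepsilon)\leqslant E_\varepsilon(u_{f_\lambda})$ for every admissible $f_\lambda$, so the choice $\lambda=\varepsilon^{-1/4}$ will yield $E_\varepsilon(u_\varepsilon)\leqslant 8\pi+c\varepsilon^{1/2}$.

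For the construction, let $\phi_\lambda(r):=2\arctan(\lambda\tan(r/2))$ denote the standard degree-one conformal bubble, which is a diffeomorphism of $[0,\pi]$ onto itself satisfying the conformal identity $\phi_\lambda'(r)\sin r=\sin\phi_\lambda(r)$. I set
\[
  f_\lambda(r):=\phi_\lambda(r)+\pi-\phi_\lambda(\pi-r).
\]
Then $f_\lambda(0)=0$, $f_\lambda(\pi)=2\pi$, and $f_\lambda$ is analytic and strictly increasing on $[0,\pi]$; the Taylor expansions at the endpoints (both $\phi_\lambda(r)$ and $\pi-\phi_\lambda(\pi-r)$ are odd at $r=0$, with an analogous statement at $r=\pi$) show that $u_{f_\lambda}\in C^\infty(S^2,S^2)$, so $f_\lambda\in X$. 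For large $\lambda$, $f_\lambda$ behaves as a sharp conformal bubble near the north pole, a reversed conformal bubble near the south pole, and $f_\lambda\approx\pi$ in between; this is a smoothing of the two-antipodal-bubbles configuration that realises the $8\pi$ infimum of the Dirichlet energy.

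The Dirichlet estimate $E(u_{f_\lambda})\leqslant 8\pi+C\lambda^{-2}$ is obtained directly from $E(u_f)=\pi\int_0^\pi[(f')^2+\sin^2 f/\sin^2 r]\sin r\,dr$ by splitting at $r=\pi/2$, using conformality of each bubble together with the expansion $\pi-\phi_\lambda(\pi-r)=r/\lambda+O(r^3/\lambda^3)$: each half yields the truncated-bubble contribution $4\pi\lambda^2/(1+\lambda^2)=4\pi-4\pi\lambda^{-2}+o(\lambda^{-2})$ plus a cross-term of order $\lambda^{-2}$ arising from the interference between the two bubbles. The interference contribution is computed by noting that the second summand shifts $f_\lambda$ above $\phi_\lambda$ by $\sim r/\lambda$ and then integrating the linearised correction after the substitution $\phi=\phi_\lambda(r)$, which produces an explicit $O(\lambda^{-2})$ integral. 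The biharmonic estimate $\int_{S^2}|\Delta u_{f_\lambda}|^2\,dA_{S^2}\leqslant C\lambda^2$ follows from the explicit formula for $|\Delta u_f|^2$ derived earlier together with $\phi_\lambda^{(k)}(r)=O(\lambda^k)$ in the bubble core of angular size $1/\lambda$; as a sanity check, the exact computation \eqref{eq:Eepl} gives $\varepsilon\int|\Delta m_\lambda|^2=\tfrac{16\pi\varepsilon}{3}(\lambda^2+1+\lambda^{-2})$ for a single M\"obius transformation, so the expected scaling is $\int|\Delta u_{f_\lambda}|^2\leqslant\tfrac{32\pi}{3}\lambda^2+O(1)$.

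Combining the two bounds gives $E_\varepsilon(u_{f_\lambda})\leqslant 8\pi+C_1\lambda^{-2}+C_2\varepsilon\lambda^2$; balancing at $\lambda=\varepsilon^{-1/4}$ yields $E_\varepsilon(u_{f_\lambda})\leqslant 8\pi+2\sqrt{C_1 C_2}\,\varepsilon^{1/2}$, which proves \eqref{eqn prop deg zero}. The main obstacle is the sharp $O(\lambda^{-2})$ Dirichlet estimate: one must carefully track the interference term $\pi-\phi_\lambda(\pi-r)$ in $(f_\lambda')^2+\sin^2 f_\lambda/\sin^2 r$ and verify that the $\lambda^{-1}$ pointwise perturbation of $f_\lambda$ away from a conformal bubble contributes only at the $\lambda^{-2}$ level in the integrated energy. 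This is a delicate cancellation that relies on the conformal change of variables $\phi=\phi_\lambda(r)$ to reduce the correction integral to an explicit one on $[0,\pi]$. The biharmonic estimate is comparatively routine but requires bookkeeping of all the terms in the formula for $|\Delta u_f|^2$.
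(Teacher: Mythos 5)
Your strategy coincides with the paper's: construct a one-parameter family of rotationally symmetric degree-zero competitors $u_{f_\lambda}$, show $E(u_{f_\lambda}) \leqslant 8\pi + C\lambda^{-2}$ and $\int_{S^2}|\Delta u_{f_\lambda}|^2\,dA_{S^2} \leqslant C\lambda^2$, and balance by taking $\lambda = \varepsilon^{-1/4}$. The only real difference is the choice of profile. The paper uses $f(r) = 2\arctan(\Lambda\tan r)$ on $[0,\pi/2]$ and $2\arctan(\Lambda\tan r) + 2\pi$ on $(\pi/2,\pi]$; this profile satisfies the algebraic identity $\sin f = f'\sin r\cos r$, which collapses $|\nabla u_f|^2$ to the single expression $(f')^2(1+\cos^2 r)$ and reduces both energy integrals to elementary ones after $t = \cos r$. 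Your profile $f_\lambda(r) = \phi_\lambda(r) + \pi - \phi_\lambda(\pi-r) = \phi_\lambda(r) + \phi_{\lambda^{-1}}(r)$ is globally smooth without a piecewise definition, but since it is a superposition rather than a rescaling, the conformal identity holds for $\phi_\lambda$ and $\phi_{\lambda^{-1}}$ separately and you get
$|\nabla u_{f_\lambda}|^2 = (\phi_\lambda')^2(1+\cos^2\phi_{\lambda^{-1}}) + (\phi_{\lambda^{-1}}')^2(1+\cos^2\phi_\lambda) + 2\phi_\lambda'\phi_{\lambda^{-1}}'(1+\cos\phi_\lambda\cos\phi_{\lambda^{-1}})$,
which forces you to estimate the cross term $2\phi_\lambda'\phi_{\lambda^{-1}}'(1+\cos\phi_\lambda\cos\phi_{\lambda^{-1}})$ on its own. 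That estimate does go through (on $[0,\pi/2]$ one has $\phi_{\lambda^{-1}}' = O(\lambda^{-1})$ and $\int_0^{\pi/2}\phi_\lambda'(1+\cos\phi_\lambda)\sin r\,dr = O(\lambda^{-1})$, with a symmetric bound on $[\pi/2,\pi]$), so your competitor works, but it is not simpler than the paper's. The more important issue is that your write-up leaves both energy estimates as assertions; those computations are precisely the content of the proof. In particular, the $O(\lambda^2)$ biharmonic bound is not just a scaling heuristic: it requires checking, term by term, that the explicit expression for $|\Delta u_f|^2$ integrates to $O(\lambda^2)$ (the paper does this by substituting $t = \cos r$, $a^2 = 1-\Lambda^{-2}$, and treating a sum of rational integrals), and the $O(\lambda^{-2})$ Dirichlet bound similarly needs to be made explicit. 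To turn the proposal into a proof you would have to carry out the same kind of bookkeeping the paper performs, for a slightly more complicated integrand.
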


\begin{proof}
	Let $\Lambda>1$ and 
	\begin{align*}
		f(r)=\begin{cases}
			2\arctan(\Lambda\tan (r)),\qquad &0\leqslant r\leqslant \frac{\pi}{2},
			\\
			2\arctan(\Lambda\tan (r))+2\pi,\qquad &\frac{\pi}{2}<r\leqslant \pi.
		\end{cases}
	\end{align*}
	We consider the corresponding map $u_f\in X$. As $r$ increases from $0$ to $\frac{\pi}{2}$, $f(r)$ increases from $0$ to $\pi$, which means that $u_f$ maps the upper hemisphere to the full sphere with the equator being mapped to the south pole $(0,0,-1)$.  As $r$ increases from $\frac{\pi}{2}$ to $\pi$,  $f(r)$ increases from $\pi$ to $2\pi$, which means that $u_f$ maps the lower hemisphere to the full sphere but with opposite orientation. Thus $u_f$ has degree zero.  We want to estimate $E_\varepsilon(u_f)$. An explicit calculation (which can be found in section $4.5$ of \cite{Jasi}) gives
\begin{align*}
		\frac{1}{2}\int_{S^2} |\Delta u_f|^2 dA_{S^2}
		&=  32\pi\int_0^1 \Lambda^{-6} \frac{t^2(1-t^2)}{(1-a^2t^2)^4}dt +  32\pi \int_0^1\Lambda^{-4} \frac{(1+t^2)^2}{(1-a^2t^2)^4}dt,
	\end{align*}
	where $a^2=1-\Lambda^{-2}$.
	We now estimate each term separately. First we note that $t\in[0,1]$ and $0\leqslant a\leqslant 1$
	\begin{align*}
		\frac{t^2(1-t^2)}{(1-a^2t^2)^4}\leqslant \frac{(a^2-a^2t^2)}{a^2 (1-a^2t^2)^4}\leqslant \frac{1}{ a^2(1-a^2t^2)^3}=\frac{1}{a^2(1+at)^3(1-at)^3}\leqslant \frac{1}{a^2(1-at)^3}.
	\end{align*}
	Then
	\begin{align*}
		32\pi\int_0^1 \Lambda^{-6} \frac{t^2(1-t^2)}{(1-a^2t^2)^4}dt &\leqslant 32\pi \Lambda^{-6} \int_0^1 \frac{1}{a^2(1-at)^3}dt 
		= 32\pi \Lambda^{-6} \left( \frac{1}{2a^3(1-a)^2} -\frac{1}{2a^3}\right)
		\\
		&\leqslant 32\pi \Lambda^{-6} \frac{(1+a)^2}{a^2(1-a^2)^2}
		\leqslant 128\pi \Lambda^{-2}\frac{1}{a^2}= 128\pi \frac{\Lambda^{-2}}{1-\Lambda^{-2}}= 128\pi \frac{1}{\Lambda^2-1},
	\end{align*}
	where we used that $\Lambda^{-2}=1-a^2$. Analogously we get for the second  term 
	\begin{align*}
		\frac{(1+t^2)^2}{(1-a^2t^2)^4}\leqslant \frac{(1+t)^4}{(1+at)^4(1-at)^4}= \frac{(a+at)^4}{a^4(1+at)^4(1-at)^4}
		\leqslant \frac{1}{a^4(1-at)^4}
	\end{align*}
	and therefore 
	\begin{align*}
		32\pi \int_0^1\Lambda^{-4} \frac{(1+t^2)^2}{(1-a^2t^2)^4}dt&\leqslant 32\pi \Lambda^{-4}\int_0^1 \frac{1}{a^4(1-at)^4}dt =32\pi \Lambda^{-4}\left( \frac{1}{3a^5(1-a)^3}- \frac{1}{3a^5}\right)
		\\
		&\leqslant 32\pi \Lambda^{-4} \frac{(1+a)^3}{a^4(1-a^2)^3}\leqslant 256 \pi \Lambda^2\frac{1}{a^4}= 256\pi \frac{\Lambda^2}{(1-\Lambda^{-2})^2}
		\\
		&= 256\pi \frac{\Lambda^{6}}{(\Lambda^2-1)^2}. 
	\end{align*}
	All in all we get 
	\begin{align*}
		\frac{1}{2}\int_{S^2} |\Delta u_f|^2 dA_{S^2}&\leqslant  128\pi \frac{1}{\Lambda^2-1}+ 256\pi \frac{\Lambda^{6}}{(\Lambda^2-1)^2} . 
	\end{align*}
	Analogously we estimate the first part of $E_\varepsilon(u_f)$ (see \cite{lammmalmic19} Proposition 3.2) 
	\begin{align}\nonumber
		\frac{1}{2}\int_{S^2} |\nabla u_f|^2 dA_{S^2}&= 8\pi  \int_0^\frac{\pi}{2}\Lambda^2 \frac{1+\cos^2 r}{(\cos^2(r) +\Lambda^2 \sin^2(r))^2}\sin rdr
		\\\nonumber
		&= 8\pi \int_0^1 \Lambda^{-2} \frac{(1+t^2)}{(1-a^2t^2)^2}dt
		\\\nonumber
		&\leqslant 8\pi \int_0^1 \frac{\Lambda^{-2}}{2a^2}\left( \frac{1}{(1-at)^2}+\frac{1}{(1+at)^2}\right)dt 
		\\\nonumber
		&= 8\pi \Lambda^{-2} \frac{1}{a^2(1-a^2)} = 8\pi \frac{1}{a^2}= 8\pi \frac{\Lambda^2}{\Lambda^2-1}.
	\end{align}		
	Together with the above we get 
	\begin{align*}
		\frac{1}{2}\int_{S^2} \bigg( |\nabla u_f|^2 +\varepsilon |\Delta u_f|^2 \bigg) dA_{S^2}&\leqslant 8\pi \frac{\Lambda^2}{\Lambda^2-1} + 128\pi\varepsilon \frac{1}{\Lambda^2-1}+ 256\pi\varepsilon \frac{\Lambda^{6}}{(\Lambda^2-1)^2}  .
	\end{align*}
	We choose $\Lambda^2>2$, then  $ \frac{\Lambda^2}{\Lambda^2-1}<1+2\Lambda^{-2} < 2$ and thus 
	\begin{align*}
		E_\varepsilon(u_f)< 8\pi \left(1+\frac{2}{\Lambda^2}\right) + 128\pi\varepsilon+ 1024 \pi \varepsilon \Lambda^2 .
	\end{align*}
	We set $\Lambda:=\varepsilon^{-\frac{1}{4}}$. Note that for $\varepsilon\in (0, \frac{1}{4})$, $\Lambda^2>2$ still holds. Then we get 
	\begin{align*}
		E_\varepsilon(u_f)&< 8\pi \left( 1+ 2\varepsilon^\frac{1}{2}\right)+ 1152 \pi \varepsilon^\frac{1}{2}
		= 8\pi +1168\pi \varepsilon^\frac{1}{2}.
	\end{align*}
	Since $u_\varepsilon$ minimizes $E_\varepsilon$  among all maps in $X$, we have 
	\begin{align*}
		E_\varepsilon(u_\varepsilon)\leqslant E_\varepsilon(u_f)<8\pi +c\varepsilon^\frac{1}{2},
	\end{align*}
	with $c=1168\pi$.
	
\end{proof}

\begin{proof}[Proof of Theorem \ref{thm deg zero}]
	Given $\delta>0$ we choose $\varepsilon\in (0,\frac{1}{4})$ such that $\varepsilon<(\frac{\delta}{c})^2$, where $c$ is the constant in (\ref{eqn prop deg zero}). With Proposition \ref{prop deg zero} and Theorem \ref{thm deg zero constant} we get 
	\begin{align*}
		8\pi+32\varepsilon\pi^2\leqslant  E_\varepsilon (u_\varepsilon) < 8\pi + c \varepsilon^\frac{1}{2} <8\pi +\delta.
	\end{align*}
\end{proof}

\medskip

\end{document}